\documentclass[12pt,a4paper,oneside]{article}

\usepackage[utf8]{inputenc}
\usepackage[russian,english]{babel}
\usepackage{amsthm,amsmath,amssymb}
\usepackage{xcolor}
\usepackage[colorlinks=true, linkcolor=blue]{hyperref}
\usepackage[english]{cleveref}
\usepackage{caption,subcaption}

\usepackage{mathtools}
\newcommand{\defeq}{\vcentcolon=}

\newcommand{\DmdOp}{\Diamond}
\newcommand{\BoxOp}{\Box}
\usepackage{thmtools}
\usepackage{thm-restate}

\newcommand{\mkTheorem}[4]{
  \declaretheorem[#4,name=#2,refname={#2,#3},Refname={#2,#3}]{#1}
}

\mkTheorem{definition}{Definition}{Definitions}{style=definition,numbered=no}
\mkTheorem{theorem}{Theorem}{Theorems}{numberwithin=section}
\mkTheorem{proposition}{Proposition}{Propositions}{sibling=theorem}
\mkTheorem{lemma}{Lemma}{Lemmas}{sibling=theorem}
\mkTheorem{corollary}{Corollary}{Corollaries}{sibling=theorem}
\crefname{equation}{}{}
\Crefname{equation}{}{}
\crefname{enumi}{}{}
\Crefname{enumi}{}{}
\crefname{section}{\S}{\S\S}

\hypersetup{hypertexnames=false}

\usepackage{multicol}
\usepackage{wrapfig}
\usepackage{graphicx}
\usepackage{centernot}

\usepackage{cases}

\newcommand{\E}[1]{\exists #1\,}
\newcommand{\A}[1]{\forall #1\,}

\usepackage{stmaryrd}

\usepackage{enumitem}
\setlist[enumerate, 1]{label=(\roman*)}

\makeatletter
\let\oldvec\vec
\renewcommand\vec[1]{%
    \renewcommand\sp{\ifx t#1\mskip1.4\thinmuskip\else\mskip0.7\thinmuskip\fi}%
    \oldvec{#1}%
    \@ifnextchar){\sp}{%
        \@ifnextchar'{\sp}{%
            \@ifnextchar]{\sp}{}%
        }%
    }%
}
\makeatother

\author{Lev Dvorkin\thanks{The author is a winner of the ``Junior Leader''
competition grant held by the ``BASIS'' Foundation for the Development of
Theoretical Physics and Mathematics.}}
\title{Monotonicity versus positivity in modal logics}
\date{\today}

\begin{document}
  \maketitle \begin{abstract}
  We say that a logic $\Lambda$ has the Lyndon positivity property (LPP) if all formulas which are
  monotone in $\Lambda$ (that is, are preserved under increasing the valuation on
  $\Lambda$-algebras) are $\Lambda$-equivalent to positive formulas (formulas without negation and
  implication symbols). In the present paper, we investigate LPP in propositional monotone modal
  logics. First, we transfer Lyndon's result from classical predicate calculus and prove LPP for all
  normal modal logics with the Lyndon interpolation property (LIP). Then we prove that all logics
  between $\mathrm{K4}.3$ and $\mathrm{S4}.3$ do not have LPP. We also show that among tabular
  extensions of $\mathrm{S4}$ there are infinitely many logics with LPP and infinitely many logics
  without this property. Finally, we prove that all canonical monotone modal logics which are
  preserved under bisimulation products have both LIP and LPP. In particular, we show LIP and LPP
  for all logics that are axiomatizable over the minimal monotone logic $\mathrm{EM}$ by means of
  closed formulas and formulas of the form $\alpha(p) \to \DmdOp p$, where $\alpha$ is positive.
\end{abstract}
\section{Introduction}
  \label{s:intro} It is well-known that disjunction, conjunction, and constant functions form a
  complete system in the class of monotone boolean functions. To state this fact in logical terms,
  one can use the following definitions: a boolean formula $\varphi$ is called
  \begin{itemize}
    \item \emph{monotone} if it is preserved when one increase valuation of variables;
    \item \emph{positive} if it is built from variables and constants using only $\wedge$ and
          $\vee$.
  \end{itemize}
  All positive formulas are trivially monotone. The above fact modulo completeness theorem yields
  the converse: in the classical propositional logic, every monotone formula is equivalent to a
  positive one.

  Lyndon~\cite{Lynd59Hom} extended this result to classical predicate calculus (CPC). A first-order
  formula is
  \begin{itemize}
    \item \emph{monotone} if it is preserved under increasing valuation of predicates;
    \item \emph{positive} if it is built from variables and constants using only $\wedge$, $\vee$,
          and quantifiers.
  \end{itemize}
  It follows from~\cite[Proposition 2]{Lynd59Hom} that all monotone formulas in CPC are equivalent
  to positive ones. Observe that this proposition is itself a straightforward consequence of the
  Lyndon interpolation property (LIP), which was established for CPC in~\cite{Lynd59Int}.

  In general, given a logic $\Lambda$, one may ask whether it satisfies the following \emph{Lyndon
  positivity property (LPP)}: every formula which is monotone in $\Lambda$ is $\Lambda$-equivalent
  to a positive formula. Of course, to consider LPP for a given logic, one needs to define what
  monotone and positive formulas are in its language, but in most cases this can be done easily by
  analogy with the above definitions for classical propositional and predicate logics.

  Ajtai and Gurevich~\cite{AjtGur87} showed that LPP does not hold in classical first-order finite
  model theory: there is a first-order formula $\varphi(P)$ that is preserved under increasing
  valuations of $P$ in all finite models, but is not equivalent over the class of all finite models
  to any positive (with respect to $P$) formula. Note that it is a folklore fact that LIP, and even
  weaker Craig interpolation property (CIP), do not hold in finite model theory.

  In~\cite{DAgHol00}, it was shown that LPP holds in the modal $\mu$-calculus, which is the
  extension of the minimal normal modal logic $\mathrm{K}$ by the least and the greatest fixed-point
  operators. Notice that $\mu$-calculus also has LIP~\cite{AfshLeigh22}.

  In the present paper, we explore the Lyndon positivity property in modal logics. In fact, LPP for
  the (polymodal) logic $\mathrm{K}$ was established in~\cite[Section 6.6]{dRij93}. However, no
  results on LPP in other modal systems are known to the author. For a modal logic $\Lambda$, we say
  that a modal formula $\varphi$ is
  \begin{itemize}
    \item \emph{monotone in $\Lambda$} if it is preserved under increasing of valuation in
          $\Lambda$-algebras;
    \item \emph{positive} if it is built from variables and constants using only $\vee$, $\wedge$,
          $\DmdOp$, and $\BoxOp$.
  \end{itemize}
  If the logic is monotone (that is, it contains the axiom $\DmdOp p \to \DmdOp(p \vee q)$), then
  all positive formulas are monotone. Moreover, following the Lyndon's proof one can easily show
  that all normal modal logics with LIP have LPP (\cref{lynd}). This result, although very simple,
  is the cornerstone of the present paper: all other results may be viewed as refinements of
  specific aspects of this theorem. We show that
  \begin{itemize}
    \item LIP is an essential condition: there are normal logics without LIP and LPP. In particular,
          the logic of the two-element cluster, which is known to have CIP, but not LIP, does not
          have LPP (\cref{ex:cn-par}). Even more natural examples are $\mathrm{K4}.3$ and
          $\mathrm{S4}.3$ which are known to lack CIP. We will show that they (and all logics
          between them) lack LPP too (\cref{ex:lin}).
    \item LIP is not a necessary condition: there are normal logics without LIP, in which LPP still
          holds. Moreover, we will show that there are infinitely many such logics among tabular
          extensions of $\mathrm{S4}$ (\cref{ex:dk}). At the same time, it is well-known that only
          finitely many extensions of $\mathrm{S4}$ have CIP~\cite{Maks79}.
    \item Normality is not a necessary condition. More specifically, in \cref{s:non-norm}, we will
          show that all $\sigma$- and $\pi$-canonical monotone modal which are preserved under
          bisimulation products have both LIP and LPP (it is known that all such logics have CIP,
          see~\cite[Section 5.2]{Marx95} for normal logics and~\cite[Section 9.2]{Han03} for
          arbitrary monotone logics).

          In particular, we will show that all logics axiomatizable over the minimal monotone logic
          $\mathrm{EM}$ by means of closed formulas and formulas of the form
          $\alpha(p) \to \DmdOp p$, where $\alpha$ is positive, satisfy these conditions and, hence,
          have both LIP and LPP.
  \end{itemize}
  One can see that there is one option left here: it is unclear whether the normality condition is
  essential, that is, whether LIP implies LPP for non-normal monotone logics. Counterexamples would
  be of great interest here, since LIP implies LPP in all systems considered so far.
 \section{Preliminaries}
  \subsection{Modal formulas}
    \label{ss:formulas} Let us fix some countable set $\mathcal{V}$ of variables. We use letters
    $p, q, r$ with indexes as meta-variables over $\mathcal{V}$ and assume, if the converse is not
    stated explicitly, that different letters and letters with different indexes correspond to
    different variables. A \emph{literal} is a variable $p \in \mathcal{V}$ or its negation
    $\neg p$. Negation of a literal is defined in an obvious way ($\neg(\neg p) = p$). For a set of
    literals $\tau$, we denote
    \begin{gather*}
      \neg\tau \defeq \{\neg l \mid l \in \tau\},\quad \tau^\pm \defeq \tau \cup \neg\tau.
    \end{gather*}
    Notice that $\mathcal{V}^\pm$ is the set of all literals. Sometimes, we use a tuple of variables
    $\vec p = (p_i)_{i < n}$ for the set $\{p_0, \dots, p_{n-1}\}$ and write something like
    $q \in \neg\vec p \cup \vec r ^{\pm}$.

    \emph{Modal formulas} are built from literals $l \in \mathcal{V}^\pm$ and constants $\bot$,
    $\top$ using binary connectives $\wedge$, $\vee$ and unary connectives $\DmdOp$, $\BoxOp$. The
    set of all formulas is denoted by $\mathrm{Fm}$. Negation of a formula is defined using duality
    laws. For a set of formulas $\Gamma$, we denote by $\neg\Gamma$ the set
    $\{\neg\varphi \mid \varphi \in \Gamma\}$. A formula is \emph{positive} if it does not contain
    negation of variables. For a formula $\varphi$, we denote by $\mathrm{vars}(\varphi)$ and
    $\mathrm{lits}(\varphi)$ the sets of all variables and literals from $\varphi$ respectively.
    Notice that $p \in \mathrm{vars}(\neg p) \setminus \mathrm{lits}(\neg p)$ and
    $\mathrm{vars}(\varphi)^\pm = \mathrm{lits}(\varphi)^\pm$ for every $\varphi \in \mathrm{Fm}$.
    For $\tau \subseteq \mathcal{V}^\pm$, we denote by $\mathrm{Fm}_\tau$ the set of all formulas
    $\varphi$ such that $\mathrm{lits}(\varphi) \subseteq \tau$. Notice that
    $\mathrm{Fm}_\mathcal{V}$ is the set of all positive formulas and
    $\mathrm{Fm}_{\neg\tau} = \neg\mathrm{Fm}_\tau$ for all $\tau \subseteq \mathcal{V}^\pm$. When
    we consider a formula $\varphi(\vec p)$, where $\vec p = (p_i)_{i<n}$, we mean some formula
    $\varphi$ such that $\mathrm{vars}(\varphi) \subseteq \vec p$. In this case, for a tuple of
    formulas $\vec \eta = (\eta_i)_{i<n}$, we denote by $\varphi(\vec \eta)$ the formula which is
    obtained by simultaneous substitution of $\eta_i$ for $p_i$ (and expanding all negations of
    formulas). The following lemma is clear:
    \begin{lemma}
      \label{l:nnf} Let $\varphi$ be a formula, $\vec p = (p_i)_{i<n}$ and $\bar q = (q_j)_{j<m}$ be
      tuples of variables such that $\mathrm{lits}(\varphi) = \vec p \cup \neg\bar q$ (some $p_i$
      and $q_j$ can coincide). Then there is a (unique) positive formula
      $\alpha(r_0, \dots, r_{n+m-1})$ such that $\varphi = \alpha(\vec p, \neg\bar q)$.
    \end{lemma}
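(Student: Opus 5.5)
The plan is to build $\alpha$ explicitly by replacing literal occurrences in $\varphi$ with fresh variables, and to prove existence and uniqueness together by structural induction on $\varphi$. Formulas here are in negation normal form: syntax trees whose leaves are literals or constants and whose internal nodes are $\wedge,\vee,\DmdOp,\BoxOp$. Fix the assignment $\rho$ of literals to new variables by $\rho(p_i)=r_i$ for $i<n$ and $\rho(\neg q_j)=r_{n+j}$ for $j<m$. If some $p_i$ coincides with some $q_j$, the literals $p_i$ and $\neg q_j$ are still distinct, so $\rho$ is well defined on $\mathrm{lits}(\varphi)=\vec p\cup\neg\bar q$. We assume the $p_i$ are pairwise distinct, and likewise the $q_j$, as is implicit in the tuple convention.

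\textbf{Construction.} Let $\alpha$ be the formula obtained from $\varphi$ by replacing each leaf literal $l$ with $\rho(l)$ and leaving constants and connectives unchanged. Formally, $\alpha_\varphi$ is defined by recursion:
\begin{itemize}
\item $\alpha_l=\rho(l)$ for literals $l$, and $\alpha_c=c$ for constants $c$;
\item $\alpha_{\psi\circ\chi}=\alpha_\psi\circ\alpha_\chi$ for $\circ\in\{\wedge,\vee\}$;
\item $\alpha_{\heartsuit\psi}=\heartsuit\alpha_\psi$ for $\heartsuit\in\{\DmdOp,\BoxOp\}$.
\end{itemize}
Each leaf of $\alpha$ is a variable $r_k$ or a constant, so $\alpha$ is positive and $\mathrm{vars}(\alpha)\subseteq\{r_0,\dots,r_{n+m-1}\}$.

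\textbf{Checking $\alpha(\vec p,\neg\bar q)=\varphi$.} Substitution into a positive formula acts homomorphically on $\wedge,\vee,\DmdOp,\BoxOp$ and fixes constants. Since $\alpha$ is positive, no negation ever has to be expanded after substituting. Hence the identity reduces to the leaf case, where $r_i\mapsto p_i$ and $r_{n+j}\mapsto\neg q_j$ return exactly $\rho^{-1}(r_k)$. The inductive step is immediate.

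\textbf{Uniqueness.} Suppose $\beta$ is positive with $\beta(\vec p,\neg\bar q)=\varphi$ syntactically. Induct on $\beta$.
\begin{itemize}
\item If $\beta$ is a variable $r_k$, then $\varphi$ is a single literal, and the substituted value determines $k$ because the literals $p_0,\dots,p_{n-1},\neg q_0,\dots,\neg q_{m-1}$ are pairwise distinct.
\item If $\beta$ is a constant, then $\varphi$ is that constant.
\item Otherwise the substitution maps variables only to literals, never to compound formulas. Therefore the top connective of $\beta$ equals the top connective of $\varphi$, and the induction hypothesis applies to the immediate subformulas.
\end{itemize}
This is also the one point needing care. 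Because the substituted values are leaves, a leaf of $\beta$ can never be matched against an internal node of $\varphi$, so the tree shapes of $\beta$ and $\varphi$ coincide. Distinctness of the substituted literals then makes $\beta$ unique, up to the harmless convention that $\alpha$ need only have its variables among the $r_k$.
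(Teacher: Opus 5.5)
Your proof is correct, and it is the routine argument the paper has in mind: the paper gives no proof and simply calls the lemma clear, while you relabel each leaf literal through the bijection $\rho$ and check both existence and injectivity of the substitution by structural induction. You also handle the only delicate point correctly: uniqueness requires the $p_i$ to be pairwise distinct (and likewise the $q_j$), whereas a coincidence $p_i = q_j$ is harmless because $p_i \neq \neg q_j$ as literals.
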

  \subsection{Modal logics}
    A \emph{modal logic} is a set $\Lambda \subseteq \mathrm{Fm}$ which contains all classical
    tautologies and is closed under the rules of modus ponens, substitution, and \emph{equivalent
    replacement} $\frac{\varphi \leftrightarrow \psi}{\DmdOp\varphi \leftrightarrow \DmdOp\psi}$.
    The least logic is denoted by $\mathrm{E}$. The greatest logic is the set of all formulas
    $\mathrm{Fm}$. All logics $\Lambda \neq \mathrm{Fm}$ are called \emph{consistent}. A formula
    $\varphi$ \emph{is derivable in $\Lambda$ from $\Gamma \subseteq \mathrm{Fm}$} if there is a
    finite subset $\Gamma_0 \subseteq \Gamma$ such that $\bigwedge\Gamma_0 \to \varphi$ is in
    $\Lambda$. In this case, we write $\Gamma \vdash_\Lambda \varphi$. If
    $\emptyset \vdash_\Lambda \varphi$ we say that $\varphi$ is \emph{derivable} in $\Lambda$ and
    write $\Lambda \vdash \varphi$. Clearly,
    $\Lambda \vdash \varphi \Leftrightarrow \varphi \in \Lambda$. For a logic $\Lambda$ and a set
    $\Gamma \subseteq \mathrm{Fm}$, we denote by $\Lambda + \Gamma$ the smallest logic containing
    $\Lambda \cup \Gamma$. We use the following notation for some modal formulas:
    \begin{gather*}
      {\rm AM} \defeq \DmdOp p \to \DmdOp(p \vee q),\quad {\rm AC} \defeq \DmdOp(p \vee q) \to
      \DmdOp p \vee \DmdOp q\\
      {\rm AN} \defeq \BoxOp\top,\quad {\rm AP} \defeq \DmdOp\top,\quad {\rm AD} \defeq \BoxOp p \to
      \DmdOp p,\\
      {\rm AT} \defeq p \to \DmdOp p,\quad {\rm A}4 \defeq \DmdOp\DmdOp p \to \DmdOp p,\quad
      {\rm AB} \defeq \DmdOp p \to \BoxOp\DmdOp p,\\
      {\rm A}.3 \defeq \DmdOp p \wedge \DmdOp q \to \DmdOp(p \wedge \DmdOp q) \vee \DmdOp(q \wedge
      \DmdOp p) \vee \DmdOp(p \wedge q).
    \end{gather*}
    For a logic $\Lambda$ and a formula ${\rm A}X$ from this list, we denote by $\Lambda X$ the
    logic $\Lambda + \{{\rm A}X\}$. We also use the following standard notation:
    \begin{gather*}
      \mathrm{K} \defeq {\rm \mathrm{EM} NC},\quad \mathrm{S4} \defeq {\rm \mathrm{K} T4},\quad
      \mathrm{S5} \defeq {\rm \mathrm{K} TB4}.
    \end{gather*}
    Logics containing $\mathrm{EM}$ are called \emph{monotone}, logics containing $\mathrm{K}$ are
    called \emph{normal}.

    For a logic $\Lambda$, we consider the following relations on formulas:
    \begin{gather*}
      \varphi \preceq_\Lambda \psi :\Leftrightarrow \Lambda \vdash \varphi \to \psi ,\quad \varphi
      \sim_\Lambda \psi :\Leftrightarrow \Lambda \vdash \varphi \leftrightarrow \psi.
    \end{gather*}
    If $\varphi \sim_\Lambda \psi$, then we say that $\varphi$ and $\psi$ are \emph{equivalent in
    $\Lambda$} or, shorter, \emph{$\Lambda$-equivalent}.
  \subsection{Interpolation properties}
    Let $\Lambda$ be a modal logic, $\varphi$ and $\psi$ be formulas. We say that
    $\iota \in \mathrm{Fm}$ is an \emph{interpolant for $\varphi$ and $\psi$ in $\Lambda$} if
    $\varphi \preceq_\Lambda \iota \preceq_\Lambda \psi$. An interpolant $\iota$ is a \emph{Craig}
    interpolant if $\mathrm{vars}(\iota) \subseteq \mathrm{vars}(\varphi) \cap \mathrm{vars}(\psi)$.
    It is a \emph{Lyndon} interpolant if
    $\mathrm{lits}(\iota) \subseteq \mathrm{lits}(\varphi) \cap \mathrm{lits}(\psi)$. Clearly, if
    $\iota$ is an interpolant, then $\varphi \preceq_\Lambda \psi$. $\Lambda$ has \emph{Craig
    (Lyndon) interpolation property} if a Craig (Lyndon) interpolant for $\varphi$ and $\psi$ exists
    whenever $\varphi \preceq_\Lambda \psi$. We use standard abbreviations CIP and LIP for Craig and
    Lyndon interpolation properties respectively. Clearly, every Lyndon interpolant for $\varphi$
    and $\psi$ is a Craig interpolant for the same formulas. Therefore, every logic with LIP has
    CIP.
  \subsection{Algebraic semantics}
    A \emph{modal algebra} is a pair $\mathfrak{A} = (\mathfrak{A}_0, \DmdOp)$, where
    $\mathfrak{A}_0 = (A; \bot, \top, \vee, \wedge, \neg)$ is a boolean algebra and
    $\DmdOp : A \to A$ is an unary operator. A \emph{valuation} on $\mathfrak{A}$ is a function
    $\vartheta : \mathcal{V} \to A$. $\vartheta$ is extended uniquely to homomorphism from the term
    algebra $\mathfrak{Fm}$ to $\mathfrak{A}$ which will be denoted by the same symbol $\vartheta$.
    An \emph{algebraic model on $\mathfrak{A}$} is a pair
    $\mathfrak{M} = (\mathfrak{A}, \vartheta)$, where $\vartheta$ is a valuation on $\mathfrak{A}$.
    A formula $\varphi$ is \emph{true in $\mathfrak{M}$} if $\vartheta(\varphi) = \top$, is
    \emph{valid in $\mathfrak{A}$} if $\varphi$ is true in all models on $\mathfrak{A}$. We denote
    this by $\mathfrak{M} \vDash \varphi$ and $\mathfrak{A} \vDash \varphi$ respectively.

    For a logic $\Lambda$, we say that $\mathfrak{M}$ is a $\Lambda$-model and $\mathfrak{A}$ is a
    $\Lambda$-algebra if $\mathfrak{M} \vDash \Lambda$ and $\mathfrak{A} \vDash \Lambda$
    respectively. Clearly, every model on a $\Lambda$-algebra is a $\Lambda$-model. The \emph{logic
    of algebra $\mathfrak{A}$} is the set $\mathrm{Log}\, \mathfrak{A}$ of all formulas that are
    valid in $\mathfrak{A}$. It is easy to show that $\mathrm{Log}\, \mathfrak{A}$ is a logic for
    every algebra $\mathfrak{A}$. The converse also holds: for a logic $\Lambda$, relation
    $\sim_\Lambda$ is a congruence on $\mathfrak{Fm}$. The quotient structure
    $\mathfrak{A}_\Lambda \defeq \mathfrak{Fm}/{\sim}_\Lambda$ is a modal algebra which is called
    the \emph{Lindenbaum-Tarski algebra of $\Lambda$}. It is well-known that
    $\mathrm{Log}\, \mathfrak{A}_\Lambda = \Lambda$.

    For a boolean algebra $\mathfrak{A}_0$, relation $a \leq b :\Leftrightarrow a \vee b = b$ is a
    partial order. A modal algebra $(\mathfrak{A}_0, \DmdOp)$ is \emph{monotone} if $\DmdOp$
    preserves this partial order. It is easy to see that $\mathfrak{A}$ is monotone iff
    $\mathrm{Log}\, \mathfrak{A}$ is monotone.

    Let us fix some tuple of variables $\vec p = (p_i)_{i<n}$ and modal algebra $\mathfrak{A}$. We
    consider the algebra $\mathfrak{A}^{A^n}$ of all $n$-place operators $f : A^n \to A$ with
    pointwise operations. For $j < n$, let $(p_j)_\mathfrak{A} : A^n \to A$ be the \emph{projection
    on the $j$-th coordinate} $(a_i)_{i < n} \mapsto a_j$. We extend this mapping to the unique
    homomorphism ${\cdot}_{\mathfrak{A}} : \mathfrak{Fm}_{\vec p^\pm} \to \mathfrak{A}^{A^n}$, where
    $\mathfrak{Fm}_{\vec p^\pm}$ is a subalgebra of $\mathfrak{Fm}$ with the carrier
    $\mathrm{Fm}_{\vec p^\pm}$. For example, for $\vec p = (p_1, p_2)$,
    $(p_1 \vee \DmdOp p_2)_\mathfrak{A}$ maps $(a_1, a_2) \in A^2$ to $a_1 \vee \DmdOp a_2$. It is
    easy to see that formulas $\varphi(\vec p)$ and $\psi(\vec p)$ are $\Lambda$-equivalent iff
    $\varphi_\mathfrak{A} = \psi_\mathfrak{A}$.

    Notice that the operator $\varphi_\mathfrak{A}$ is well-defined only when we fix the
    tuple~$\vec p$ (including the order of variables in it). For example, $\varphi = p_1$ defines
    the first projection for $\vec p = (p_1, p_2)$ and the second projection for
    $\vec p = (p_2, p_1)$.
  \subsection{Relations, functions and operators}
    For a set $W$, we denote by $\mathcal{P}(W)$ the collection of all its subsets. The boolean
    algebra $(\mathcal{P}(W), \emptyset, W, \cup, \cap, -)$, where $-X \defeq W \setminus X$ for
    $X \subseteq W$, is denoted by $\mathfrak{P}(W)$.

    A \emph{relation between} sets $W_1$ and $W_2$ is a set $Z \subseteq W_1 \times W_2$. If
    $W_1 = W_2 = W$, we say that $Z$ is a \emph{relation on $W$}. We identify relation $Z$ with the
    operator $Z : \mathcal{P}(W_1) \to \mathcal{P}(W_2)$ mapping $X_1 \subseteq W_1$ to its full
    image $\{w_2 \in W_2 \mid \E{ w_1 \in X_1}(w_1 \mathrel Z w_2)\}$. The inverse relation
    $\{(v, u) \mid u \mathrel Z v\}$ is denoted by $Z^{-1}$ (and is identified with the full
    preimage operator). For relations $R_1 \subseteq W_0 \times W_1$ and
    $R_2 \subseteq W_1 \times W_2$, their \emph{composition} is the relation
    \begin{gather*}
      R_2R_1 = \{(w_0, w_2) \in W_0 \times W_2 \mid \E{ w_1 \in W_1}(w_0 \mathrel R_1 w_1
      \mathrel R_2 w_2)\}.
    \end{gather*}
    The \emph{domain of $Z$} is the set ${\rm dom}\, Z \defeq Z^{-1}W_2$, the \emph{range of $Z$} is
    the set ${\rm rng}\, Z \defeq ZW_1$. $Z$ is \emph{full} if ${\rm dom}\, Z = W_1$ and
    ${\rm rng}\, Z = W_2$. As usual, functions $f : W_1 \to W_2$ are treated as relations
    $\{(u, f(u)) \mid u \in W_1\}$. The identity function on $W$ is denoted by $1_W$ (and is
    identified with identity operator on $\mathcal{P}(W)$).

    It can be easily checked that, for every relation $Z \subseteq W_1 \times W_2$, $Z$ and
    $-Z^{-1}-$ form a monotone Galois connection between $\mathfrak{P}(W_1)$ and
    $\mathfrak{P}(W_2)$, that is, the following holds:
    \begin{gather*}
      \A{ X_1 \subseteq W_1}\A{ X_2 \subseteq W_2}(Z X_1 \subseteq X_2 \Leftrightarrow X_1 \subseteq
      -Z^{-1}{-}X_2),\\
      {-}Z^{-1}{-}Z \geq 1_{W_1},\quad Z{-}Z^{-1}{-} \leq 1_{W_2}.
    \end{gather*}
    For a function $f : W_1 \to W_2$, $f^{-1} : \mathfrak{P}(W_2) \to \mathfrak{P}(W_1)$ is a
    homomorphism of boolean algebras, whence $-f^{-1}- = f^{-1}$ and we have the following:
    $ff^{-1} \leq 1_{W_2}$, $f^{-1}f \geq 1_{W_1}$. If ${\rm rng}\, f = W_2$, then clearly
    $ff^{-1} = 1_{W_2}$.

  \subsection{Neighborhood and Kripke semantics}
    A \emph{neighborhood frame} is a pair $\mathfrak{F} = (W, \DmdOp)$, where
    $\DmdOp : \mathcal{P}(W) \to \mathcal{P}(W)$. Elements of $W$ are called \emph{worlds}. For a
    neighborhood frame $\mathfrak{F}$, we consider a modal algebra
    $\mathfrak{F}^* \defeq (\mathfrak{P}(W), \DmdOp)$ and transfer all semantic notions from
    $\mathfrak{F}^*$ to $\mathfrak{F}$. In particular, a \emph{neighborhood model} is a pair
    $\mathfrak{M} = (\mathfrak{F}, \vartheta)$, where $\vartheta$ is a valuation on
    $\mathfrak{P}(W)$. We say that a formula $\varphi$ \emph{is true at the world $w$ in the model
    $\mathfrak{M}$} and write $\mathfrak{M}, w \vDash \varphi$ if $w \in \vartheta(\varphi)$.

    A \emph{Kripke frame} is a pair $\mathcal{F} = (W, R)$, where $R$ is a relation on $W$, which is
    called the \emph{accessability relation} of $\mathcal{F}$. For a Kripke frame $\mathcal{F}$, we
    consider the neighborhood frame $\mathfrak{n}\mathcal{F} \defeq (W, R^{-1})$, and transfer all
    semantic notions from $\mathfrak{n}\mathcal{F}$ to $\mathcal{F}$. We also put
    $\mathcal{F}^* \defeq (\mathfrak{n}\mathcal{F})^*$.
  \subsection{Bisimulations}
    \label{ss:bisim}
    \begin{definition}
      Let $\mathfrak{F}_1 = (W_1, \DmdOp_1)$ and $\mathfrak{F}_2 = (W_2, \DmdOp_2)$ be monotone
      neighborhood frames. Relation $Z \subseteq W_1 \times W_2$ is a \emph{bisimulation between
      $\mathfrak{F}_1$ and $\mathfrak{F}_2$} if the following condition hold:
      \begin{itemize}
        \item $({\rm zig})$: for all $X_1 \subseteq W_1$, $Z(\DmdOp_1X_1) \subseteq \DmdOp_2Z(X_1)$;
        \item $({\rm zag})$: for all $X_2 \subseteq W_2$,
              $Z^{-1}(\DmdOp_2X_2) \subseteq \DmdOp_1Z^{-1}(X_2)$.
      \end{itemize}
    \end{definition}
    \begin{lemma}
      $({\rm zag})$ is equivalent to the following condition:
      \begin{itemize}
        \item $({\rm zag}')$: for all $X_1 \subseteq W_1$,
              $Z(\BoxOp_1X_1) \subseteq \BoxOp_2Z(X_1)$.
      \end{itemize}
    \end{lemma}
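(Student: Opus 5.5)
We prove (zag) $\Rightarrow$ (zag$'$) and then the converse, using only the Galois connection between $Z$ and $-Z^{-1}-$ recorded above and the duality $\BoxOp_i X = -\DmdOp_i(-X)$. Monotonicity of $\DmdOp_i$ enters in both directions.

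\emph{From (zag) to (zag$'$).} Fix $X_1 \subseteq W_1$ and put $X_2 \defeq -Z(X_1)$. By the Galois connection, $Z(Y) \subseteq \BoxOp_2 Z(X_1)$ is equivalent to $Y \subseteq -Z^{-1}(-\BoxOp_2 Z(X_1))$. Since $-\BoxOp_2 Z(X_1) = \DmdOp_2(X_2)$, it suffices to show
\[
Z^{-1}(\DmdOp_2 X_2) \subseteq -\BoxOp_1 X_1 = \DmdOp_1(-X_1).
\]
Applying (zag) to $X_2$ gives $Z^{-1}(\DmdOp_2X_2) \subseteq \DmdOp_1 Z^{-1}(X_2)$. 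Now $Z^{-1}(-Z(X_1))$ consists of points $u$ that are $Z$-related to some $v \notin Z(X_1)$. Such a $u$ cannot lie in $X_1$, so $Z^{-1}(X_2) \subseteq -X_1$. Equivalently, this is the inequality ${-}Z^{-1}{-}Z \geq 1_{W_1}$. Monotonicity of $\DmdOp_1$ then yields $\DmdOp_1 Z^{-1}(X_2) \subseteq \DmdOp_1(-X_1)$, as required.

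\emph{From (zag$'$) to (zag).} Fix $X_2 \subseteq W_2$ and put $X_1 \defeq -Z^{-1}(X_2)$. Applying (zag$'$) to $X_1$ gives $Z(\BoxOp_1 X_1) \subseteq \BoxOp_2 Z(X_1)$. By the dual inclusion $Z{-}Z^{-1}{-} \leq 1_{W_2}$, read with complements, we have $Z(-Z^{-1}(X_2)) \subseteq -X_2$. Monotonicity of $\BoxOp_2$, which follows from that of $\DmdOp_2$, then gives $Z(\BoxOp_1 X_1) \subseteq \BoxOp_2(-X_2) = -\DmdOp_2 X_2$. By the Galois connection this means
\[
\BoxOp_1 X_1 \subseteq -Z^{-1}(\DmdOp_2 X_2).
\]
Taking complements, $Z^{-1}(\DmdOp_2 X_2) \subseteq -\BoxOp_1 X_1 = \DmdOp_1(-X_1) = \DmdOp_1 Z^{-1}(X_2)$, which is (zag).

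The argument is routine. The only care needed is in reading the Galois inequalities in the right direction and in invoking monotonicity at the right step.
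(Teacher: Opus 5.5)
Your proof is correct and follows essentially the same route as the paper: the same substitution $X_2 = -Z(X_1)$, the Galois inequality ${-}Z^{-1}{-}Z \geq 1_{W_1}$, and monotonicity of the modal operator. The only difference is that the paper gets the converse by applying the first direction to $Z^{-1}$ as a relation between the dual frames $(W_2, \BoxOp_2)$ and $(W_1, \BoxOp_1)$, whereas you write out that dual argument explicitly using $Z{-}Z^{-1}{-} \leq 1_{W_2}$.
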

    \begin{proof}
      Suppose that $({\rm zag})$ is satisfied. Then
      $\DmdOp_2X_2 \subseteq {-}Z{-} \DmdOp_1Z^{-1} X_2$, whence
      \begin{gather*}
        Z\BoxOp_1{-}Z^{-1} X_2 \subseteq \BoxOp_2{-} X_2.
      \end{gather*}
      Let $X_2 \defeq {-}Z X_1$. Since ${-}Z^{-1}{-}Z \geq 1_{W_1}$,
      \begin{gather*}
        Z\BoxOp_1 X_1 \subseteq Z\BoxOp_1 {-}Z^{-1}{-}Z X_1 \subseteq \BoxOp_2Z X_1,
      \end{gather*}
      that is, $({\rm zag}')$ holds. To prove the converse implication, it is sufficient to consider
      relation $Z^{-1}$ between $(W_2, \BoxOp_2)$ and $(W_1, \BoxOp_1)$:
      $({\rm zag}) \Rightarrow ({\rm zag}')$ for this relation is exactly
      $({\rm zag}') \Rightarrow ({\rm zag})$ for $Z$.
    \end{proof}
    \begin{definition}
      Let $\mathfrak{M}_1 = (\mathfrak{F}_1, \vartheta_1)$ and
      $\mathfrak{M}_2 = (\mathfrak{F}_2, \vartheta_2)$ be models on monotone neighborhood frames
      $\mathfrak{F}_1 = (W_1, \DmdOp_1)$ and $\mathfrak{F}_2 = (W_2, \DmdOp_2)$, $\tau$ be a set of
      literals. Bisimulation $Z$ between $\mathfrak{F}_1$ and $\mathfrak{F}_2$ is a
      \emph{$\tau$-bisimulation between $\mathfrak{M}_1$ and $\mathfrak{M}_2$} if the following
      condition holds
      \begin{itemize}
        \item $({\rm lit})$: for all $l \in \tau$, $Z(\vartheta_1(l)) \subseteq \vartheta_2(l)$.
      \end{itemize}
    \end{definition}
    Clearly, if $Z$ is a $\tau$-bisimulation between $\mathfrak{M}_1$ and $\mathfrak{M}_2$, then
    $Z^{-1}$ is a $\neg\tau$-bisimulation between $\mathfrak{M}_2$ and $\mathfrak{M}_1$.

    We will use the following synonyms:
    \begin{itemize}
      \item \emph{bisimulations} (between models) for $\mathcal{V}^\pm$-bisimulations;
      \item \emph{directed bisimulations} for $\mathcal{V}$-bisimulations;
      \item \emph{$\vec p$-directed bisimulations} for $\tau$-bisimulations, where $\vec p$ is a
            tuple of variables and $\tau = \mathcal{V}^\pm \setminus \neg\vec p$.
    \end{itemize}
    The notion of bisimulation between monotone neighborhood models is standard (see,
    e.g.,~\cite[Definition 2.2]{Pac17}). For Kripke models, directed bisimulations were considered
    in the context of positive fragments~\cite{KurtDRij97}, $\sigma^\pm$-bisimulations for
    $\sigma \subseteq \mathcal{V}$ were considered in connection with CIP~\cite{KWZ23}.
    In~\cite{Kur20}, a notion which is equivalent to $\tau$-bisimulations for Kripke frames was used
    in the investigation of uniform LIP.

    Let $\mathfrak{M}_k = (W_k, \DmdOp_k, \vartheta_k)$, $k = 1, 2$ be neighborhood models,
    $\varphi$ be a modal formula. We say that relation $Z \subseteq W_1 \times W_2$ \emph{preserves
    $\varphi$} if $Z \vartheta_1(\varphi) \subseteq \vartheta_2(\varphi)$. In other words, $Z$
    preserves $\varphi$ if
    \begin{gather*}
      \A{(w_1, w_2) \in Z} (\mathfrak{M}_1, w_1 \vDash \varphi \Rightarrow \mathfrak{M}_2, w_2
      \vDash \varphi).
    \end{gather*}
    The following lemma can be easily proved by induction:
    \begin{lemma}
      \label{bisim-main} Let $Z$ be a $\tau$-bisimulation between neighborhood models. Then $Z$
      preserves all formulas from $\mathrm{Fm}_\tau$.
    \end{lemma}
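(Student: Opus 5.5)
We argue by structural induction on $\varphi \in \mathrm{Fm}_\tau$. Formulas are built from literals and constants using $\wedge$, $\vee$, $\DmdOp$ and $\BoxOp$, so the induction runs over these cases. The inductive claim is: for every $\varphi \in \mathrm{Fm}_\tau$,
\begin{gather*}
  Z\vartheta_1(\varphi) \subseteq \vartheta_2(\varphi).
\end{gather*}
Every subformula of a formula in $\mathrm{Fm}_\tau$ again lies in $\mathrm{Fm}_\tau$, so the induction hypothesis is available for all immediate subformulas.

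\emph{Base cases.} For a literal $l \in \tau$ the claim is exactly condition $({\rm lit})$. For $\bot$ we have $Z\emptyset = \emptyset$. For $\top$ we have $ZW_1 \subseteq W_2$ trivially.

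\emph{Boolean connectives.} The full-image operator $Z$ preserves unions, so
\begin{gather*}
  Z\vartheta_1(\varphi\vee\psi) = Z\vartheta_1(\varphi)\cup Z\vartheta_1(\psi) \subseteq \vartheta_2(\varphi)\cup\vartheta_2(\psi) = \vartheta_2(\varphi \vee \psi).
\end{gather*}
$Z$ is monotone, hence $Z(X\cap Y)\subseteq ZX\cap ZY$, and the conjunction case follows in the same way. Negation needs no separate case, since it is only applied to variables, and literals were handled in the base.

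\emph{Modal cases.} For $\DmdOp\varphi$, condition $({\rm zig})$ gives
\begin{gather*}
  Z\DmdOp_1\vartheta_1(\varphi) \subseteq \DmdOp_2 Z\vartheta_1(\varphi) \subseteq \DmdOp_2\vartheta_2(\varphi).
\end{gather*}
The last inclusion uses the induction hypothesis together with monotonicity of $\DmdOp_2$, which holds because $\mathfrak{F}_2$ is a monotone frame. For $\BoxOp\varphi$ we argue identically, using $({\rm zag}')$ from the preceding lemma and monotonicity of $\BoxOp_2 = {-}\DmdOp_2{-}$, which follows from monotonicity of $\DmdOp_2$.

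The main point needing care is the $\BoxOp$ case: it relies on the equivalence $({\rm zag})\Leftrightarrow({\rm zag}')$ already proved, and on the fact that monotonicity of $\DmdOp_2$ transfers to its dual $\BoxOp_2$. This is the only place where the monotonicity assumption on the frames is used. All other steps are immediate.
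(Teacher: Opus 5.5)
Your proof is correct and is the structural induction the paper intends; the paper only remarks that the lemma is easily proved by induction. You handle the $\DmdOp$ case with $({\rm zig})$ and the $\BoxOp$ case with $({\rm zag}')$ plus monotonicity of $\BoxOp_2$. One small inaccuracy in your closing remark: monotonicity of $\DmdOp_2$ is also used in the $\DmdOp$ case (as you note there yourself) and in the proof of $({\rm zag})\Leftrightarrow({\rm zag}')$, so the $\BoxOp$ case is not the only place it enters.
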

    As always, we transfer the above notions from neighborhood to Kripke frames and models. For
    Kripke frames, $({\rm zig})$ and $({\rm zag})$ are equivalent to the following well-known
    conditions:
    \begin{itemize}
      \item $({\rm zig}_\mathrm{K})$ $\A{(w_1, w_2) \in Z}\A{ v_1 \in R_1\{w_1\}}\E{ v_2 \in
            R_2\{w_2\}}(v_1 \mathrel Z v_2)$;
      \item $({\rm zag}_\mathrm{K})$ $\A{(w_1, w_2) \in Z}\A{ v_2 \in R_2\{w_2\}}\E{ v_1 \in
            R_1\{w_1\}}(v_1 \mathrel Z v_2)$.
    \end{itemize}
  \subsection{Morphisms}
    \begin{definition}
      Let $\mathfrak{F}_1 = (W_1, \DmdOp_1)$ and $\mathfrak{F}_2 = (W_2, \DmdOp_2)$ be neighborhood
      frames. A mapping $f : W_1 \to W_2$ is a \emph{morphism} from $\mathfrak{F}_1$ to
      $\mathfrak{F}_2$ if
      \begin{gather}
        \A{ X_2 \subseteq W_2}\bigl(f^{-1}(\DmdOp_2X_2) = \DmdOp_1f^{-1}(X_2)\bigr).
        \label{eq:p-morph}
      \end{gather}
      In this case, we write $f : \mathfrak{F}_1 \to \mathfrak{F}_2$.
    \end{definition}
    In other words, $f$ is a morphism of Kripke frames iff
    $f^{-1} : \mathfrak{F}_2^* \to \mathfrak{F}_1^*$ is a homomorphism of modal algebras. Usually,
    such mappings are called p-morphisms, but this term could be confused with the above
    $p$-something terms in which $p$ is a variable.
    \begin{lemma}
      A function $f : W_1 \to W_2$ is a morphism from $\mathfrak{F}_1$ to $\mathfrak{F}_2$ iff $f$
      is a bisimulation between $\mathfrak{F}_1$ and $\mathfrak{F}_2$. In other words, morphisms of
      frames are exactly total functional bisimulations.
    \end{lemma}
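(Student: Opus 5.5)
The plan is to check both directions directly, using only the operator identities for functions recorded above: $f^{-1}$ is a Boolean homomorphism (so $-f^{-1}- = f^{-1}$), $ff^{-1} \leq 1_{W_2}$, and $f^{-1}f \geq 1_{W_1}$. Since $f$ is a total function, its inverse relation $f^{-1}$ is exactly the preimage operator. So $({\rm zag})$ for $Z = f$ reads literally
\begin{gather*}
  f^{-1}(\DmdOp_2X_2) \subseteq \DmdOp_1f^{-1}(X_2) \quad\text{for all } X_2 \subseteq W_2,
\end{gather*}
which is one inclusion of \eqref{eq:p-morph}. The task therefore reduces to showing that, given $({\rm zag})$ and monotonicity, $({\rm zig})$ is equivalent to the reverse inclusion $\DmdOp_1f^{-1}(X_2) \subseteq f^{-1}(\DmdOp_2X_2)$.

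First, assume $f$ is a morphism and derive $({\rm zig})$. For $X_1 \subseteq W_1$, monotonicity of $\DmdOp_1$ together with $X_1 \subseteq f^{-1}fX_1$ gives $\DmdOp_1X_1 \subseteq \DmdOp_1 f^{-1}(fX_1)$. By \eqref{eq:p-morph} the right-hand side equals $f^{-1}(\DmdOp_2 fX_1)$. Applying $f$ and using $ff^{-1} \leq 1_{W_2}$ yields $f(\DmdOp_1X_1) \subseteq \DmdOp_2 fX_1$, which is $({\rm zig})$.

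Conversely, assume $f$ is a bisimulation. Take $X_1 \defeq f^{-1}X_2$. Then $({\rm zig})$ gives $f\DmdOp_1 f^{-1}X_2 \subseteq \DmdOp_2 ff^{-1}X_2$. Since $ff^{-1}X_2 \subseteq X_2$ and $\DmdOp_2$ is monotone, this is contained in $\DmdOp_2X_2$. Applying $f^{-1}$ and using $f^{-1}f \geq 1_{W_1}$ gives $\DmdOp_1 f^{-1}X_2 \subseteq f^{-1}\DmdOp_2X_2$. Combined with $({\rm zag})$, this is \eqref{eq:p-morph}.

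None of these steps is a real obstacle. The only point needing care is that monotonicity of the frames is used in both directions. The inequalities for $ff^{-1}$ and $f^{-1}f$ hold for arbitrary total functions, without surjectivity, so no extra assumption on $f$ is needed.
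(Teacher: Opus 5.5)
Your proof is correct and is essentially the paper's own argument. The forward direction uses $X_1 \subseteq f^{-1}fX_1$, monotonicity, the morphism equality, and $ff^{-1} \leq 1_{W_2}$. The converse combines $({\rm zag})$ with the chain $\DmdOp_1 f^{-1}X_2 \subseteq f^{-1}f\DmdOp_1 f^{-1}X_2 \subseteq f^{-1}\DmdOp_2 ff^{-1}X_2 \subseteq f^{-1}\DmdOp_2X_2$; you only reorder it by first applying $({\rm zig})$ to $X_1 = f^{-1}X_2$. Your explicit remark that monotonicity of both frames is needed is a useful clarification that the paper leaves implicit.
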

    \begin{proof}
      Recall that, for a function $f : W_1 \to W_2$, $f^{-1}f \geq 1_{W_1}$ and
      $ff^{-1} \leq 1_{W_2}$. If $f$ is a morphism, then \cref{eq:p-morph} implies $({\rm zag})$ and
      \begin{gather*}
        f(\DmdOp_1X_1) \subseteq f\DmdOp_1f^{-1}fX_1 = ff^{-1}\DmdOp_2fX_1 \subseteq \DmdOp_2f(X_1).
      \end{gather*}
      If $f$ is a bisimulation, then, by $({\rm zag})$,
      $f^{-1}(\DmdOp_2X_2) \subseteq \DmdOp_1f^{-1}(X_2)$. Conversely,
      \begin{gather*}
        \DmdOp_1f^{-1}(X_2) \subseteq f^{-1}f \DmdOp_1 f^{-1} X_2 \subseteq f^{-1}\DmdOp_2
        ff^{-1}X_2 \subseteq f^{-1}(\DmdOp_2X_2).
      \end{gather*}
    \end{proof}
    \begin{definition}
      Let $\mathfrak{M}_1 = (\mathfrak{F}_1, \vartheta_1)$ and
      $\mathfrak{M}_2 = (\mathfrak{F}_2, \DmdOp_2)$ be neighborhood models, $\vec p$ be a tuple of
      variables. A morphism $f : \mathfrak{F}_1 \to \mathfrak{F}_2$ is called a
      \emph{($\vec p$-directed) morphism} from $\mathfrak{M}_1$ to $\mathfrak{M}_2$ if it is a
      ($\vec p$-directed) bisimulation between models $\mathfrak{M}_1$ and $\mathfrak{M}_2$.
    \end{definition}
    Notice that, if $f : \mathfrak{F}_1 \to \mathfrak{F}_2$ is a morphism and $\vartheta_2$ is a
    valuation on $\mathfrak{F}_2$, then $\vartheta_1 \defeq f^{-1}\vartheta_2$ is the unique
    valuation on $\mathfrak{F}_1$ such that $f$ is a morphism from $(\mathfrak{F}_1, \vartheta_1)$
    to $(\mathfrak{F}_2, \vartheta_2)$.

    As always, we transfer the above notions from neighborhood to Kripke frames and models. It is
    easy to check that, for Kripke frames $\mathcal{F}_k = (W_k, R_k)$, \cref{eq:p-morph}~is
    equivalent to the following, well-known, conditions:
    \begin{itemize}
      \item $({\rm fwd})$:
            $\A{ w_1 \in W_1}\A{ v_1 \in R_1\{w_1\}} \bigl(f(w_1) \mathrel R_2 f(v_1)\bigr)$;
      \item $({\rm bwd})$: $\A{ w_1 \in W_1}\A{ v_2 \in R_2\{f(w_1)\}}\E{ v_1 \in
            R\{w_1\}}\bigl(f(v_1) = v_2\bigr)$.
    \end{itemize}
  \subsection{Canonical frames and models}
    Let us fix some logic $\Lambda$. A set of formulas $\Gamma$ is \emph{$\Lambda$-consistent} if
    $\Gamma \nvdash_\Lambda \bot$. We denote by $W_\Lambda$ the collection of all maximal (with
    respect to $\subseteq$) $\Lambda$-consistent sets. We put
    $\llbracket\Gamma\rrbracket_\Lambda \defeq \{w \in W_\Lambda \mid \Gamma \subseteq w\}$ and
    $\llbracket\varphi\rrbracket_\Lambda \defeq \llbracket\{\varphi\}\rrbracket_\Lambda$. Usually,
    we will omit the subscript and write simply $\llbracket\Gamma\rrbracket$ and
    $\llbracket\varphi\rrbracket$. The following results are well-known:
    \begin{itemize}
      \item (Lindenbaum's lemma): if $\Gamma$ is $\Lambda$-consistent, then
            $\llbracket\Gamma\rrbracket \neq \emptyset$;
      \item (representation of the Lindenbaum-Tarski algebra): let $D_\Lambda$ be the set
            $\{\llbracket\varphi\rrbracket \mid \varphi \in \mathrm{Fm}\}$, $\DmdOp_\Lambda$ be an
            operator on $D_\Lambda$ which maps $\llbracket\varphi\rrbracket$ to
            $\llbracket\DmdOp\varphi\rrbracket$ for all $\varphi \in \mathrm{Fm}$. Then the mapping
            $\llbracket\cdot\rrbracket_\Lambda : \mathrm{Fm} \to \mathcal{P}(W_\Lambda)$ induces the
            isomorphism between $\mathfrak{A}_\Lambda$ and $\mathfrak{D}_\Lambda$, where
            $\mathfrak{D}_\Lambda = (\mathfrak{P}(W_\Lambda)|_{D_\Lambda}, \DmdOp_\Lambda)$.
    \end{itemize}
    Notice that the dual operator $\BoxOp_\Lambda = -\DmdOp_\Lambda\!-$ maps
    $\llbracket\varphi\rrbracket$ to $\llbracket\BoxOp\varphi\rrbracket$.
    \begin{definition}
      A neighborhood frame $\mathfrak{F} = (W_\Lambda, \DmdOp)$ is a \emph{canonical frame for
      $\Lambda$} if $\DmdOp$ is an extension of $\DmdOp_\Lambda$, that is,
      $\DmdOp\llbracket\varphi\rrbracket = \llbracket\DmdOp\varphi\rrbracket$ for all
      $\varphi \in \mathrm{Fm}$. A canonical frame $\mathfrak{F}$ with valuation
      $\vartheta_\Lambda : p \mapsto \llbracket p\rrbracket$ is called a \emph{canonical model of
      $\Lambda$}.
    \end{definition}
    The following standard lemma can be easily checked by induction:
    \begin{lemma}
      \label{canon-main} Let $\mathfrak{M} = (W_\Lambda, \DmdOp, \vartheta_\Lambda)$ be a canonical
      neighborhood model for $\Lambda$. Then
      $\vartheta_\Lambda(\varphi) = \llbracket\varphi\rrbracket$ for all $\varphi \in \mathrm{Fm}$.
      In particular, $\mathfrak{M} \vDash \varphi \Leftrightarrow \varphi \in \Lambda$.
    \end{lemma}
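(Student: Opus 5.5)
We argue by induction on the construction of $\varphi$. Formulas here are built from literals and the constants $\bot$, $\top$ by $\wedge$, $\vee$, $\DmdOp$, $\BoxOp$; negation is only a defined operation given by the duality laws. The claim to prove, for every $\varphi$, is
\[
\vartheta_\Lambda(\varphi)=\llbracket\varphi\rrbracket .
\]

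\textbf{Base cases.} For a variable $p$ this is the definition of $\vartheta_\Lambda$. For constants, $\vartheta_\Lambda(\bot)=\emptyset=\llbracket\bot\rrbracket$ and $\vartheta_\Lambda(\top)=W_\Lambda=\llbracket\top\rrbracket$. This uses that maximal consistent sets are consistent, so none contains $\bot$, and maximal, so each contains $\top$. For a negative literal $\neg p$ we have $\vartheta_\Lambda(\neg p)=W_\Lambda\setminus\llbracket p\rrbracket$. So we need the standard fact that every maximal $\Lambda$-consistent set $w$ contains exactly one of $\psi$ and $\neg\psi$:
\begin{itemize}
\item It cannot contain both, since $\{\psi,\neg\psi\}\vdash_\Lambda\bot$.
\item If $\psi\notin w$, maximality makes $w\cup\{\psi\}$ inconsistent. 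Then $w\vdash_\Lambda\neg\psi$, so $w\cup\{\neg\psi\}$ is consistent, and maximality gives $\neg\psi\in w$.
\end{itemize}

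\textbf{Boolean steps.} For $\wedge$ we use that maximal consistent sets are closed under derivability: if $w\vdash_\Lambda\chi$, then $w\cup\{\chi\}$ is consistent, so $\chi\in w$. Hence $\psi\wedge\chi\in w$ iff $\psi\in w$ and $\chi\in w$, i.e. $\llbracket\psi\wedge\chi\rrbracket=\llbracket\psi\rrbracket\cap\llbracket\chi\rrbracket$, and the induction hypothesis finishes this case. For $\vee$ we use primeness: if $\psi\vee\chi\in w$ but $\psi\notin w$, then $\neg\psi\in w$ by the fact above, so $w\vdash_\Lambda\chi$ and $\chi\in w$. Hence $\llbracket\psi\vee\chi\rrbracket=\llbracket\psi\rrbracket\cup\llbracket\chi\rrbracket$. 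Both equalities are also contained in the quoted representation of $\mathfrak{A}_\Lambda$ by $\mathfrak{D}_\Lambda$, which we may cite directly.

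\textbf{Modal steps.} For $\DmdOp\psi$:
\[
\vartheta_\Lambda(\DmdOp\psi)=\DmdOp\,\vartheta_\Lambda(\psi)=\DmdOp\llbracket\psi\rrbracket=\llbracket\DmdOp\psi\rrbracket,
\]
using the induction hypothesis and then the defining property of a canonical frame. For $\BoxOp\psi$ the algebra interprets $\BoxOp$ as $-\DmdOp-$. By the induction hypothesis on $\psi$ and the literal/complement fact, $-\vartheta_\Lambda(\psi)=-\llbracket\psi\rrbracket=\llbracket\neg\psi\rrbracket$. Therefore
\[
\vartheta_\Lambda(\BoxOp\psi)=-\DmdOp\llbracket\neg\psi\rrbracket=-\llbracket\DmdOp\neg\psi\rrbracket=\llbracket\neg\DmdOp\neg\psi\rrbracket=\llbracket\BoxOp\psi\rrbracket .
\]
The last step uses that $\neg\DmdOp\neg\psi$ and $\BoxOp\psi$ are $\Lambda$-equivalent; in fact they are syntactically equal by the duality definition of negation. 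This step is the only mild subtlety: the canonicity condition concerns only $\DmdOp$, so $\BoxOp$ must be reduced to $\DmdOp$ through complements.

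\textbf{The ``in particular'' clause.} We have $\mathfrak{M}\vDash\varphi$ iff $\llbracket\varphi\rrbracket=W_\Lambda$.
\begin{itemize}
\item If $\varphi\in\Lambda$, every maximal consistent set derives $\varphi$ and hence contains it.
\item If $\varphi\notin\Lambda$, then $\{\neg\varphi\}$ is $\Lambda$-consistent, since otherwise $\Lambda\vdash\neg\varphi\to\bot$, i.e. $\Lambda\vdash\varphi$. By Lindenbaum's lemma some $w$ contains $\neg\varphi$, so $\varphi\notin w$ and $\llbracket\varphi\rrbracket\neq W_\Lambda$.
\end{itemize}
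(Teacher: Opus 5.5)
Your proof is correct and is the routine induction on the construction of $\varphi$ that the paper intends; the paper gives no details beyond saying the lemma ``can be easily checked by induction''. You supply those details correctly, in particular the $\BoxOp$ case via complements and $\BoxOp\psi = \neg\DmdOp\neg\psi$, and the ``in particular'' clause via Lindenbaum's lemma.
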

    If $\Lambda$ is monotone, than $\DmdOp_\Lambda$ is a monotone operator. In general, a monotone
    operator $f$ on $\mathfrak{D}_\Lambda$ can be extended to a monotone operator on
    $\mathfrak{P}(W_\Lambda)$ in many ways. The following extensions trace back to the classical
    work of J\'onsson and Tarski~\cite{JonTar51} and are known as $\sigma$- and $\pi$-extensions:
    \begin{gather*}
      f^\sigma X \defeq \bigcup_{\Gamma : \llbracket\Gamma\rrbracket \subseteq X}\, \bigcap_{\varphi
      : \llbracket\Gamma\rrbracket \subseteq \llbracket\varphi\rrbracket}
      f\llbracket\varphi\rrbracket,\\
      f^\pi X \defeq \bigcap_{\Gamma : X \subseteq -\llbracket\Gamma\rrbracket}\, \bigcup_{\varphi :
      \llbracket\varphi\rrbracket \subseteq -\llbracket\Gamma\rrbracket}
      f\llbracket\varphi\rrbracket.
    \end{gather*}
    One can easily check that (see \cite[Section 6.1]{Han03})
    \begin{enumerate}
      \item $f^\sigma$ and $f^\pi$ are monotone operators. \label{ce:mon}
      \item $f^\sigma\llbracket\varphi\rrbracket = f^\pi\llbracket\varphi\rrbracket =
            f\llbracket\varphi\rrbracket$ for all $\varphi \in \mathrm{Fm}$. \label{ce:ext}
      \item $f^\sigma\llbracket\Gamma\rrbracket = \bigcap_{\varphi : \llbracket\Gamma\rrbracket
            \subseteq \llbracket\varphi\rrbracket} f\llbracket\varphi\rrbracket$ and
            $f^\pi(-\llbracket\Gamma\rrbracket) = \bigcup_{\varphi : \llbracket\varphi\rrbracket
            \subseteq -\llbracket\Gamma\rrbracket} f\llbracket\varphi\rrbracket$. \label{ce:cl}
      \item $f^\sigma X = \bigcup_{\Gamma : \llbracket\Gamma\rrbracket \subseteq X} f^\sigma
            \llbracket\Gamma\rrbracket$ and $f^\pi X = \bigcap_{\Gamma : X \subseteq
            -\llbracket\Gamma\rrbracket} f^\pi(-\llbracket\Gamma\rrbracket)$. \label{ce:arb}
      \item the dual of the $\sigma$- ($\pi$-)extension of the operator $f$ is \label{ce:dual} the
            $\pi$- ($\sigma$-)extension of the dual of $f$. In particular,
            $-\DmdOp^\sigma_\Lambda- = \BoxOp^\pi_\Lambda$ and
            $-\DmdOp^\pi_\Lambda- = \BoxOp^\sigma_\Lambda$.
    \end{enumerate}
    Notice that \cref{ce:cl,ce:arb} provide an equivalent two-step definition of $\sigma$- and
    $\pi$-extensions. First, we define $f^\sigma$ on the sets of the form
    $\llbracket\Gamma\rrbracket$ and $f^\pi$ on the sets of the form $-\llbracket\Gamma\rrbracket$.
    Then, we extend $f^\sigma$ and $f^\pi$ to arbitrary sets.
    \begin{definition}
      $\mathfrak{F}^\sigma_\Lambda \defeq (W_\Lambda, \DmdOp^\pi_\Lambda)$ and
      $\mathfrak{F}^\pi_\Lambda \defeq (W_\Lambda, \DmdOp^\sigma_\Lambda)$ are called the $\sigma$-
      and $\pi$-canonical models of $\Lambda$ respectively. A logic $\Lambda$ is $\sigma$-
      ($\pi$-)canonical if $\mathfrak{F}^\sigma_\Lambda \vDash \Lambda$
      ($\mathfrak{F}^\pi_\Lambda \vDash \Lambda$).
    \end{definition}
    Notice the discrepancy in the last definition: the operator on the $\sigma$-canonical frame is
    the $\pi$-canonical extension of $\DmdOp_\Lambda$ and vice versa. This is because usually
    neighborhood models are defined using $\BoxOp$ as a primary operator and the box-operator on
    $\sigma$-canonical frame is $\BoxOp^\sigma_\Lambda$ (see \cref{ce:dual}).
    \begin{definition}
      A formula $\varphi$ is \emph{$\sigma$- ($\pi$-)canonical} if
      $\mathfrak{F}^\sigma_\Lambda \vDash \varphi$ ($\mathfrak{F}^\pi_\Lambda \vDash \varphi$) for
      all $\Lambda \supseteq \mathrm{EM} + \varphi$.
    \end{definition}
    By \cref{ce:mon}, $\mathrm{EM}$ is both $\sigma$- and $\pi$-canonical. Moreover, if all formulas
    $\varphi \in \Gamma$ are $\sigma$- ($\pi$-)canonical, then the logic $\mathrm{EM} + \Gamma$ is
    $\sigma$- ($\pi$-)canonical. Since the truth of closed formulas does not depend on valuation of
    variables, from \cref{canon-main}, we immediately get that all closed formulas are both
    $\sigma$- and $\pi$-canonical. The following result was obtained in~\cite[Theorems 10.34 and
    10.44]{Han03}:
    \begin{proposition}
      \label{KW-canon} Formulas of the form
      \begin{gather*}
        \chi \to \bigvee_{i<n}p_i \vee \bigvee_{j<m}\#q_j,
      \end{gather*}
      where $\chi$ is positive and $\# = \DmdOp$ ($\# = \BoxOp$), are $\sigma$- ($\pi$-)canonical.
    \end{proposition}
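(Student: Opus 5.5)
The plan is to argue directly on the canonical frames. I treat the $\sigma$-case, where $\# = \DmdOp$, in detail. Let $\Lambda \supseteq \mathrm{EM} + \varphi$, where $\varphi = \chi \to \bigvee_{i<n}p_i \vee \bigvee_{j<m}\DmdOp q_j$. In $\mathfrak{F}^\sigma_\Lambda$ the operators are $\DmdOp^\pi_\Lambda$ and $\BoxOp^\sigma_\Lambda = -\DmdOp^\pi_\Lambda-$. Suppose, for a contradiction, that some valuation $\vartheta$ and world $w$ satisfy $w \in \vartheta(\chi)$, $w \notin \vartheta(p_i)$ for all $i$, and $w \notin \DmdOp^\pi_\Lambda\vartheta(q_j)$ for all $j$. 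Call sets of the form $-\llbracket\Gamma\rrbracket$ \emph{open} and sets $\llbracket\psi\rrbracket$ \emph{clopen}.

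\textbf{Step 1: replace the valuation by an open one.} By the definition of $f^\pi$, for each $j$ there is $\Gamma_j$ with $\vartheta(q_j) \subseteq -\llbracket\Gamma_j\rrbracket$ and $w \notin \DmdOp^\pi_\Lambda(-\llbracket\Gamma_j\rrbracket)$. Define a new valuation $\vartheta'$ by
\begin{itemize}
  \item $\vartheta'(q_j) \defeq -\llbracket\Gamma_j\rrbracket$;
  \item $\vartheta'(p_i) \defeq -\llbracket w\rrbracket = W_\Lambda\setminus\{w\}$, which is open;
  \item $\vartheta'(r) \defeq W_\Lambda$ for every other variable $r$ of $\chi$.
\end{itemize}
Then $\vartheta' \geq \vartheta$ on all variables of $\chi$. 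Since $\chi$ is positive and both operators are monotone (\cref{ce:mon}), we still have $w \in \vartheta'(\chi)$. We also still have $w\notin\vartheta'(p_i)$ and $w \notin \DmdOp^\pi_\Lambda\vartheta'(q_j)$.

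\textbf{Step 2 (main step): an approximation lemma for open valuations.} I will prove by induction on a positive formula $\alpha$ the following. If every variable is valued by an open set $O_r$, then $\vartheta'(\alpha)$ is the union of the sets $\vartheta_\psi(\alpha)$, where $\vartheta_\psi$ ranges over valuations assigning clopen sets $\llbracket\psi_r\rrbracket \subseteq O_r$. Moreover, the family of such clopen tuples is directed: any two are dominated by a third, obtained by taking disjunctions.
\begin{itemize}
  \item The base case is the fact that an open set $-\llbracket\Gamma\rrbracket$ is the union of the clopens $\llbracket\psi\rrbracket$ it contains; this is Lindenbaum's lemma together with compactness of derivability.
  \item The cases $\vee$ and $\wedge$ follow from monotonicity and directedness.
  \item For $\DmdOp^\pi_\Lambda$: by the induction hypothesis the argument is a directed union $U$ of clopens, and $U$ is itself open. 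By \cref{ce:cl}, $\DmdOp^\pi_\Lambda U$ is the union of $\DmdOp\llbracket\psi\rrbracket$ over clopens $\llbracket\psi\rrbracket\subseteq U$. By compactness, each such clopen lies below one member of the directed family, and monotonicity finishes this case.
  \item For $\BoxOp^\sigma_\Lambda$ on an open $U$, use \cref{ce:arb}: $\BoxOp^\sigma_\Lambda U = \bigcup_{\llbracket\Delta\rrbracket\subseteq U}\BoxOp^\sigma_\Lambda\llbracket\Delta\rrbracket$. By compactness, every closed $\llbracket\Delta\rrbracket \subseteq U$ lies under some clopen $\llbracket\psi\rrbracket\subseteq U$. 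Hence $\BoxOp^\sigma_\Lambda U = \bigcup_{\llbracket\psi\rrbracket\subseteq U}\BoxOp\llbracket\psi\rrbracket$ by \cref{ce:ext}, and directedness again closes the induction.
\end{itemize}
I expect this lemma to be the main obstacle. The delicate point is making the compactness arguments and the directedness bookkeeping precise enough that the induction goes through.

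\textbf{Step 3: conclude.} By Step 2 there are formulas $\psi_r$ with $\llbracket\psi_r\rrbracket \subseteq \vartheta'(r)$ and $w \in \llbracket\chi(\vec\psi)\rrbracket$. By substitution, $\Lambda \vdash \chi(\vec\psi) \to \bigvee_i\psi_{p_i} \vee \bigvee_j\DmdOp\psi_{q_j}$, so $w$ contains one of the disjuncts.
\begin{itemize}
  \item $w \in \llbracket\psi_{p_i}\rrbracket$ is impossible, since $\llbracket\psi_{p_i}\rrbracket \subseteq -\llbracket w\rrbracket$.
  \item $w \in \llbracket\DmdOp\psi_{q_j}\rrbracket$ is impossible, since $\llbracket\psi_{q_j}\rrbracket \subseteq -\llbracket\Gamma_j\rrbracket$ and then $\DmdOp\llbracket\psi_{q_j}\rrbracket \subseteq \DmdOp^\pi_\Lambda(-\llbracket\Gamma_j\rrbracket)$, which does not contain $w$.
\end{itemize}
This contradiction proves the $\sigma$-case.

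The $\pi$-case, where $\# = \BoxOp$, follows the same steps in $\mathfrak{F}^\pi_\Lambda = (W_\Lambda,\DmdOp^\sigma_\Lambda)$ with $\BoxOp^\pi_\Lambda = -\DmdOp^\sigma_\Lambda-$. If $w\notin\BoxOp^\pi_\Lambda\vartheta(q_j)$, then by \cref{ce:arb} there is an open $U_j\supseteq\vartheta(q_j)$ with $w\notin\BoxOp^\pi_\Lambda U_j$. By \cref{ce:dual,ce:cl}, $\BoxOp^\pi_\Lambda U_j$ is the union of $\BoxOp\llbracket\psi\rrbracket$ over clopens $\llbracket\psi\rrbracket\subseteq U_j$. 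Step 2 goes through verbatim, because $\DmdOp^\sigma_\Lambda$ and $\BoxOp^\pi_\Lambda$ on open sets are again directed unions of their values on clopens, by the same compactness argument. Step 3 is then unchanged.
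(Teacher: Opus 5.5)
Your proof is correct. It takes a genuinely different route from the paper, which gives no argument of its own and imports the result from Hansen's thesis \cite[Theorems 10.34 and 10.44]{Han03}.

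Your version is self-contained: it uses only Lindenbaum's lemma, compactness of $\vdash_\Lambda$, and properties \cref{ce:mon}--\cref{ce:dual} of the extensions. It also shows exactly where the choice of canonical frame matters. Step~1 needs the operator interpreting $\#$ (namely $\DmdOp^\pi_\Lambda$, resp.\ $\BoxOp^\pi_\Lambda$) to be a $\pi$-extension. By \cref{ce:arb}, a world outside $\#\vartheta(q_j)$ then already lies outside $\#U_j$ for some open $U_j \supseteq \vartheta(q_j)$. This step is not available for a $\sigma$-extension in general.

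Step~2 is an open-set (order-dual) version of the intersection lemma used in Sambin--Vaccaro-style Sahlqvist canonicity proofs. It does not distinguish the two canonical frames at all. Your $\DmdOp$- and $\BoxOp$-cases in effect show that, for a monotone operator, the $\sigma$- and $\pi$-extensions coincide on open sets, since both equal the union of the values on the clopens below. Relying on the citation keeps the paper short and places the result inside Hansen's general canonicity theory. Your argument instead makes the mechanism explicit.

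One simplification: you need not redo the $\pi$-case by hand. The dual $\varphi^d$ has the $\DmdOp$-form with $\chi^d$ positive, $\mathrm{EM}^d = \mathrm{EM}$, and $(\mathfrak{M}^\pi_\Lambda)^d \cong \mathfrak{M}^\sigma_{\Lambda^d}$. So the $\pi$-case follows from the $\sigma$-case by the duality facts already in the paper.
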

    In particular, ${\rm AT}$, ${\rm A}4$, ${\rm AC}$, and ${\rm AD}$ are $\sigma$-canonical,
    ${\rm AT}$ is also $\pi$-canonical since
    $\mathrm{EM} + {\rm AT} = \mathrm{EM} + \BoxOp p \to p$.

    If $\Lambda$ is normal, then $\DmdOp^\sigma_\Lambda = \DmdOp^\pi_\Lambda = R^{-1}_\Lambda$,
    where $R_\Lambda$ is a relation on $W_\Lambda$ defined by the following well-known condition:
    \begin{gather*}
      w \mathrel R_\Lambda v :\Leftrightarrow \A{\varphi \in v}(\DmdOp\varphi \in w).
    \end{gather*}
    $\mathcal{F}_\Lambda \defeq (W_\Lambda, R_\Lambda)$ and
    $\mathcal{M}_\Lambda \defeq (\mathcal{F}_\Lambda, \vartheta_\Lambda)$ are known as the
    \emph{canonical Kripke frame} and the \emph{canonical Kripke model of $\Lambda$} respectively.
    Notice that
    $\mathfrak{n}\mathcal{F}_\Lambda = \mathfrak{F}^\sigma_\Lambda = \mathfrak{F}^\pi_\Lambda$.
    Hence, for normal logics, we have only one canonical frame and only one notion of canonicity:
    $\Lambda$ is \emph{canonical} iff $\mathcal{F}_\Lambda \vDash \Lambda$. However, one should be
    careful with canonical formulas: $\varphi$ is canonical in the sense of Kripke semantics (that
    is, the logic $\mathrm{K} + \varphi$ is canonical) iff ${\rm AC} \wedge {\rm AN} \wedge \varphi$
    is $\sigma$- (or $\pi$-)canonical.
  \subsection{Modal duality in monotone logics}
    For a formula $\varphi$, consider its dual formula $\varphi^d$ which is obtained from $\varphi$
    by replacing $\DmdOp$ by $\BoxOp$ and vice versa. For a neighborhood frame
    $\mathfrak{F} = (W, \DmdOp)$ and model $\mathfrak{M} = (\mathfrak{F}, \vartheta)$, their duals
    are as follows: $\mathfrak{F}^d \defeq (W, \BoxOp)$ and
    $\mathfrak{M}^d \defeq (\mathfrak{F}^d, \vartheta)$. For sets of formulas and classes of frames
    and models, their duals are defined in an obvious way. The following properties can be easily
    checked~\cite[Section 10.6 ]{Han03}:
    \begin{enumerate}
      \item \label{d-log} $\Lambda$ is a logic iff $\Lambda^d$ is a logic;
      \item \label{d-deriv} $\Gamma \vdash_\Lambda \varphi$ iff $\Gamma^d \vdash_\Lambda \varphi^d$;
      \item \label{d-truth} $\mathfrak{M}, w \vDash \varphi$ iff
            $\mathfrak{M}^d, w \vDash \varphi^d$ (in other words,
            $\vartheta(\varphi) = \vartheta^d(\varphi^d)$, where $\vartheta^d$ is an extension of
            $\vartheta$ in the dual frame);
      \item \label{d-logOf} $(\mathrm{Log}\, \mathfrak{F})^d = \mathrm{Log}\, \mathfrak{F}^d$;
      \item \label{d-bisim} $Z$ is a $\tau$-bisimulation between models $\mathfrak{M}_1$ and
            $\mathfrak{M}_2$ iff $Z$ is a $\tau$-bisimulation between models $\mathfrak{M}_1^d$ and
            $\mathfrak{M}_2^d$;
    \end{enumerate}
    \begin{proposition}
      $(\mathfrak{M}^\sigma_\Lambda)^d \cong \mathfrak{M}^\pi_{\Lambda^d}$ and
      $(\mathfrak{M}^\pi_\Lambda)^d \cong \mathfrak{M}^\sigma_{\Lambda^d}$, where isomorphism maps
      $w \in W_\Lambda$ into $w^d$.
    \end{proposition}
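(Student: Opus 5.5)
The approach is to write down the map $d : w \mapsto w^d \defeq \{\varphi^d \mid \varphi \in w\}$ and check three things in turn: that it is a bijection $W_\Lambda \to W_{\Lambda^d}$, that it carries the valuation to the valuation, and that it intertwines the modal operators. Only the last step involves real work, and it reduces to the duality between $\sigma$- and $\pi$-extensions.

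\textbf{Bijection and valuation.} By property \ref{d-deriv} of duality (with \ref{d-log}), a set $\Gamma$ is $\Lambda$-consistent iff $\Gamma^d$ is $\Lambda^d$-consistent. Since $d$ is an involution on formulas, $w \mapsto w^d$ preserves inclusion and is its own inverse. Hence it is a bijection $W_\Lambda \to W_{\Lambda^d}$ sending maximal consistent sets to maximal consistent sets.

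It also satisfies $d\llbracket\Gamma\rrbracket_\Lambda = \llbracket\Gamma^d\rrbracket_{\Lambda^d}$. In particular $d\llbracket p\rrbracket_\Lambda = \llbracket p\rrbracket_{\Lambda^d}$ because $p^d = p$, so $d$ maps $\vartheta_\Lambda$ to $\vartheta_{\Lambda^d}$. Complements are preserved as well, since $d$ is a bijection.

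\textbf{Operators on definable sets.} It remains to show that $d$ intertwines the operators: the box operator of $\mathfrak{M}^\sigma_\Lambda$ (the operator of its dual) must go to the diamond operator of $\mathfrak{M}^\pi_{\Lambda^d}$. That is, I need
\begin{gather*}
  d\,\BoxOp^\sigma_\Lambda X = \DmdOp^\sigma_{\Lambda^d}\, dX \quad\text{for all } X \subseteq W_\Lambda,
\end{gather*}
where I used \ref{ce:dual}: $-\DmdOp^\pi_\Lambda- = \BoxOp^\sigma_\Lambda$.

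On definable sets this is immediate:
\begin{gather*}
  d\,\BoxOp_\Lambda\llbracket\varphi\rrbracket = d\llbracket\BoxOp\varphi\rrbracket = \llbracket\DmdOp\varphi^d\rrbracket_{\Lambda^d} = \DmdOp_{\Lambda^d}\llbracket\varphi^d\rrbracket.
\end{gather*}
So $d$ conjugates the operator $\BoxOp_\Lambda$ on $\mathfrak{D}_\Lambda$ into $\DmdOp_{\Lambda^d}$ on $\mathfrak{D}_{\Lambda^d}$.

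\textbf{Extension to arbitrary sets.} The $\sigma$-extension is defined purely in terms of the family of sets $\llbracket\Gamma\rrbracket$, the definable sets, and unions and intersections. All of these are transported by $d$: closed sets go to closed sets, definable sets to definable sets, and unions and intersections are preserved. Hence $d$ conjugates $(\BoxOp_\Lambda)^\sigma$ into $(\DmdOp_{\Lambda^d})^\sigma$.

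I would make this precise via the two-step description in \ref{ce:cl} and \ref{ce:arb}. First check the identity on sets of the form $\llbracket\Gamma\rrbracket$, using the intersection formula. Then check it on arbitrary $X$, using the union formula over $\llbracket\Gamma\rrbracket \subseteq X$.

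\textbf{Second isomorphism.} The statement for $(\mathfrak{M}^\pi_\Lambda)^d$ is handled in exactly the same way, with $\pi$-extensions and the dual formulas of \ref{ce:cl} and \ref{ce:arb}. Alternatively, it follows from the first isomorphism applied to $\Lambda^d$, using $(\Lambda^d)^d = \Lambda$ and $(\mathfrak{M}^d)^d = \mathfrak{M}$.

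The main obstacle is not conceptual but bookkeeping. One has to keep straight that the $\sigma$-canonical frame carries $\DmdOp^\pi$, so its dual carries $\BoxOp^\sigma$, which is the $\sigma$-extension of a box. The key point to make explicit is that $\sigma$-extension is natural with respect to any isomorphism of the underlying structures that respects the family of definable sets.
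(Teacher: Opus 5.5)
Your proposal is correct and follows the paper's proof. The paper uses the same bijection $w \mapsto w^d$ with $\llbracket\Gamma\rrbracket_\Lambda^d = \llbracket\Gamma^d\rrbracket_{\Lambda^d}$, and then reindexes the $\sigma$-extension formula by $\Gamma \mapsto \Gamma^d$, $\varphi \mapsto \varphi^d$ to obtain $(\BoxOp^\sigma_\Lambda X)^d = \DmdOp^\sigma_{\Lambda^d} X^d$. It gets the $\pi$-case by a complementation step that amounts to your ``apply the first isomorphism to $\Lambda^d$'' route.
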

    \begin{proof}
      By \cref{d-deriv}, $\Gamma$ is $\Lambda$-consistent iff $\Gamma^d$ is $\Lambda^d$ consistent,
      whence ${\cdot}^d : w \mapsto w^d$ is a one-to-one correspondence between $W_\Lambda$ and
      $W_{\Lambda^d}$. Moreover, $\Gamma \subseteq w \Leftrightarrow \Gamma^d \subseteq w^d$, whence
      $\llbracket\Gamma\rrbracket_\Lambda^d \defeq (\llbracket\Gamma\rrbracket_\Lambda)^d =
      \llbracket\Gamma^d\rrbracket_{\Lambda^d}$. In particular,
      $\llbracket p\rrbracket_\Lambda^d = \llbracket p\rrbracket_{\Lambda^d}$, that is, ${\cdot}^d$
      preserves the valuation. It remains to show that,
      \begin{gather*}
        (\BoxOp^\pi_\Lambda X)^d = \DmdOp^\pi_{\Lambda^d} X^d \quad\text{and}\quad
        (\BoxOp^\sigma_\Lambda X)^d = \DmdOp^\sigma_{\Lambda^d} X^d \quad\text{for all }X \subseteq
        W_\Lambda.
      \end{gather*}
      Notice that one can easily derive one of these equalities from another, e.g.
      \begin{gather*}
        (\BoxOp^\pi_\Lambda X)^d = \bigl(-\DmdOp^\sigma_\Lambda(-X)\bigr)^d =
        \bigl(-(\BoxOp^\sigma_{\Lambda^d}(-X)^d)^d\bigr)^d = \DmdOp^\pi_{\Lambda^d}X^d.
      \end{gather*}
      Let us prove the second equality. We have:
      \begin{gather*}
        (\BoxOp^\sigma_\Lambda X)^d = \bigcup_{\Gamma : \llbracket\Gamma\rrbracket_\Lambda \subseteq
        X}\, \bigcap_{\varphi : \llbracket\Gamma\rrbracket_\Lambda \subseteq
        \llbracket\varphi\rrbracket_\Lambda} \llbracket\BoxOp\varphi\rrbracket_\Lambda^d.
      \end{gather*}
      Notice that
      \begin{gather*}
        \llbracket\Gamma\rrbracket_\Lambda \subseteq X \Leftrightarrow
        \llbracket\Gamma\rrbracket_\Lambda^d \subseteq X^d \Leftrightarrow
        \llbracket\Gamma^d\rrbracket_{\Lambda^d} \subseteq X^d,\\
        \llbracket\Gamma\rrbracket_\Lambda \subseteq \llbracket\varphi\rrbracket_\Lambda
        \Leftrightarrow \llbracket\Gamma\rrbracket_\Lambda^d \subseteq
        \llbracket\varphi\rrbracket_\Lambda^d \Leftrightarrow
        \llbracket\Gamma^d\rrbracket_{\Lambda^d} \subseteq
        \llbracket\varphi^d\rrbracket_{\Lambda^d},
      \end{gather*}
      and $\llbracket\BoxOp\varphi\rrbracket_\Lambda^d =
      \llbracket\DmdOp\varphi^d\rrbracket_{\Lambda^d}$. Denoting $\Delta = \Gamma^d$ and
      $\psi = \varphi^d$, we obtain
      \begin{gather*}
        (\BoxOp^\sigma_\Lambda X)^d = \bigcup_{\Delta : \llbracket\Delta\rrbracket_{\Lambda^d}
        \subseteq X^d}\, \bigcap_{\psi : \llbracket\Delta\rrbracket_{\Lambda^d} \subseteq
        \llbracket\psi\rrbracket_{\Lambda^d}} \llbracket\DmdOp\psi\rrbracket_{\Lambda^d}.
      \end{gather*}
      This expression clearly equals $\DmdOp^\sigma_{\Lambda^d} X^d$.
    \end{proof}
    Duality allows us to derive from a statement about monotone logics and algebras its dual: an
    object $O$ has a property $P$ iff the dual object $O^d$ has the dual property $P^d$. In
    particular, we have the following
    \begin{corollary}
      $\Lambda$ is $\sigma$- ($\pi$-)canonical iff $\Lambda^d$ is $\pi$- ($\sigma$-)canonical.
    \end{corollary}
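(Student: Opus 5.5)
The corollary follows by unwinding the definitions and using the preceding proposition together with the duality properties \cref{d-log,d-truth,d-logOf}. We prove the $\sigma$-case; the $\pi$-case is symmetric, or follows by applying the $\sigma$-case to $\Lambda^d$, since $(\Lambda^d)^d = \Lambda$. By definition, $\Lambda$ is $\sigma$-canonical iff $\mathfrak{F}^\sigma_\Lambda \vDash \Lambda$. The underlying frame of $\mathfrak{M}^\sigma_\Lambda$ is $\mathfrak{F}^\sigma_\Lambda$, so the proposition gives an isomorphism of frames $(\mathfrak{F}^\sigma_\Lambda)^d \cong \mathfrak{F}^\pi_{\Lambda^d}$ via $w \mapsto w^d$.

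The chain of equivalences is as follows. First, by \cref{d-truth}, for every valuation $\vartheta$ we have $\vartheta(\varphi) = \vartheta^d(\varphi^d)$. Since frames and their duals share the same carrier and hence the same valuations, $\mathfrak{F} \vDash \varphi$ iff $\mathfrak{F}^d \vDash \varphi^d$; this is exactly \cref{d-logOf}. Therefore $\mathfrak{F}^\sigma_\Lambda \vDash \Lambda$ iff $(\mathfrak{F}^\sigma_\Lambda)^d \vDash \Lambda^d$. Second, validity is invariant under isomorphism of neighborhood frames, because an isomorphism induces a bijection between valuations commuting with the operators. Hence $(\mathfrak{F}^\sigma_\Lambda)^d \vDash \Lambda^d$ iff $\mathfrak{F}^\pi_{\Lambda^d} \vDash \Lambda^d$, which says precisely that $\Lambda^d$ is $\pi$-canonical. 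Here \cref{d-log} guarantees that $\Lambda^d$ is a logic, so the notion is defined. The converse direction is contained in the chain, since every step is an equivalence.

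The only point needing care is that the proposition is stated for models, whereas canonicity is a statement about frames. We pass to frames simply by forgetting the valuation: an isomorphism of models is in particular an isomorphism of the underlying frames. We also need the identity $(\Lambda^d)^d = \Lambda$, which holds because $\cdot^d$ is an involution on formulas.
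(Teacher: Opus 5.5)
Your proposal is correct and takes the same route as the paper, which states the corollary as an immediate consequence of the preceding proposition $(\mathfrak{M}^\sigma_\Lambda)^d \cong \mathfrak{M}^\pi_{\Lambda^d}$ together with the general duality facts. The paper does not write out the argument, and your chain $\mathfrak{F}^\sigma_\Lambda \vDash \Lambda \Leftrightarrow (\mathfrak{F}^\sigma_\Lambda)^d \vDash \Lambda^d \Leftrightarrow \mathfrak{F}^\pi_{\Lambda^d} \vDash \Lambda^d$ is exactly the omitted unwinding, including the passage from models to underlying frames and the use of $(\Lambda^d)^d = \Lambda$ for the $\pi$-case.
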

 \section{Basic facts about positive formulas and interpolation}
  \subsection{Core definitions}
    Let us fix tuples of variables $\vec p = (p_i)_{i<n}$, $\bar r = (r_j)_{j<m}$ and a logic
    $\Lambda$. A formula $\varphi(\vec p, \bar r)$ is called \emph{$\vec p$-positive} if
    $\mathrm{lits}(\varphi) \cap \neg\vec p = \emptyset$. $\varphi(\vec p, \bar r)$ is
    \emph{$\vec p$-monotone in $\Lambda$} if
    $\Lambda \vdash \varphi(\vec p, \bar r) \to \varphi(\vec p \vee \vec q, \bar r)$. A formula
    $\psi(\vec p)$ is \emph{positive (monotone in $\Lambda$)} if it is $\vec p$-positive
    ($\vec p$-monotone in $\Lambda$). Notice that this definition of positive formulas is equivalent
    to the definition from~\cref{ss:formulas}.

    Let us consider the following partial order on the set of valuations on a modal algebra
    $\mathfrak{A}$:
    \begin{gather*}
      \vartheta_1 \leq_{\vec p} \vartheta_2 :\Leftrightarrow \vartheta_1|_{\vec p} \leq
      \vartheta_2|_{\vec p} \text{ and } \vartheta_1|_{\mathcal{V} \setminus \vec p} =
      \vartheta_2|_{\mathcal{V} \setminus \vec p}.
    \end{gather*}
    $\varphi$ is \emph{$\vec p$-monotone in $\mathfrak{A}$} if, for all valuations
    $\vartheta_1, \vartheta_2$ on $\mathfrak{A}$ such that $\vartheta_1 \leq_{\vec p} \vartheta_2$,
    $\vartheta_1(\varphi) \leq \vartheta_2(\varphi)$.
    \begin{proposition}
      \label{mon-equiv} Suppose that $\Lambda = \mathrm{Log}\, \mathfrak{C}$ for some class of modal
      algebras $\mathfrak{C}$. Then the following conditions on a formula $\varphi(\vec p, \bar r)$
      are equivalent:
      \begin{enumerate}
        \item $\varphi$ is $\vec p$-monotone in all algebras from $\mathfrak{C}$; \label{it:me-1}
        \item $\varphi_\mathfrak{A}(\cdot, \bar c) : \vec a \mapsto \varphi_\mathfrak{A}(\vec a,
              \bar c)$ is monotone for all $\mathfrak{A} \in \mathfrak{C}$, $\bar c \in A^m$
              \label{it:me-2}
        \item $\varphi$ is $\vec p$-monotone in $\Lambda$. \label{it:me-3}
      \end{enumerate}
    \end{proposition}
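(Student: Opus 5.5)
We prove the cycle of implications (i)$\Rightarrow$(ii)$\Rightarrow$(iii)$\Rightarrow$(i), working throughout with the operators $\varphi_\mathfrak{A}$ for the fixed tuple $(\vec p, \bar r)$.

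\emph{(i)$\Rightarrow$(ii).} Fix $\mathfrak{A} \in \mathfrak{C}$, $\bar c \in A^m$, and tuples $\vec a \leq \vec b$ in $A^n$. Let $\vartheta_1$ send $p_i \mapsto a_i$ and $r_j \mapsto c_j$, and let it be arbitrary (say $\bot$) elsewhere. Let $\vartheta_2$ coincide with $\vartheta_1$ except that $p_i \mapsto b_i$. Then $\vartheta_1 \leq_{\vec p} \vartheta_2$, and $\vartheta_k(\varphi)$ equals $\varphi_\mathfrak{A}(\vec a, \bar c)$ and $\varphi_\mathfrak{A}(\vec b, \bar c)$ respectively. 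Hence (i) gives the required inequality.

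\emph{(ii)$\Rightarrow$(iii).} We must show $\mathfrak{A} \vDash \varphi(\vec p, \bar r) \to \varphi(\vec p \vee \vec q, \bar r)$ for every $\mathfrak{A} \in \mathfrak{C}$, since $\Lambda = \mathrm{Log}\,\mathfrak{C}$. Take any valuation $\vartheta$ and put $a_i = \vartheta(p_i)$, $b_i = \vartheta(p_i) \vee \vartheta(q_i)$, $c_j = \vartheta(r_j)$. Substitution commutes with valuations, so $\vartheta(\varphi(\vec p \vee \vec q, \bar r)) = \varphi_\mathfrak{A}(\vec b, \bar c)$. (Substitution is defined with negations expanded by duality, but this agrees with the homomorphic image because valuations are Boolean homomorphisms.) Since $\vec a \leq \vec b$, (ii) gives $\varphi_\mathfrak{A}(\vec a, \bar c) \leq \varphi_\mathfrak{A}(\vec b, \bar c)$, so the implication evaluates to $\top$.

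\emph{(iii)$\Rightarrow$(i).} Let $\mathfrak{A} \in \mathfrak{C}$ and $\vartheta_1 \leq_{\vec p} \vartheta_2$. The derivable formula $\varphi(\vec p, \bar r) \to \varphi(\vec p \vee \vec q, \bar r)$ is valid in $\mathfrak{A}$; here $\vec q$ are fresh variables not occurring in $\varphi$. Evaluate it under the valuation $\vartheta$ that agrees with $\vartheta_1$ and sets $q_i \mapsto \vartheta_2(p_i)$. Then $\vartheta(p_i \vee q_i) = \vartheta_2(p_i)$ because $\vartheta_1(p_i) \leq \vartheta_2(p_i)$. The remaining variables of $\varphi$ are among $\bar r$, where $\vartheta_1$ and $\vartheta_2$ agree. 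Therefore $\vartheta_1(\varphi) \leq \vartheta(\varphi(\vec p \vee \vec q, \bar r)) = \vartheta_2(\varphi)$.

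All three steps are routine. The only point needing care is the bookkeeping in the last two: the values of a formula depend only on its variables, and substitution is compatible with evaluation. Both facts follow by a straightforward induction on formulas.
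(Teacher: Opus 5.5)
Your proof is correct and follows the paper's argument essentially step for step: the same cycle (i)$\Rightarrow$(ii)$\Rightarrow$(iii)$\Rightarrow$(i), with the same choice of valuations in each step. In particular, for (iii)$\Rightarrow$(i) the paper also evaluates $\varphi(\vec p, \bar r) \to \varphi(\vec p \vee \vec q, \bar r)$ under $p_i \mapsto \vartheta_1(p_i)$, $q_i \mapsto \vartheta_2(p_i)$, and uses $\vartheta_1(p_i) \leq \vartheta_2(p_i)$ to identify the consequent with $\vartheta_2(\varphi)$.
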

    \begin{proof}
      \cref{it:me-1} $\Rightarrow$ \cref{it:me-2}. Suppose that $\varphi$ is $\vec p$-monotone in an
      algebra $\mathfrak{A} \in \mathfrak{C}$, $\vec a, \vec b \in A^n$ are such that
      $\vec a \leq \vec b$, $\bar c \in A^m$. Let us show that
      $\varphi_\mathfrak{A}(\vec a, \bar c) \leq \varphi_\mathfrak{A}(\vec b, \bar c)$. Consider the
      following valuations on $\mathfrak{A}$:
      \begin{itemize}
        \item $\vartheta_1(p_i) \defeq a_i$ and $\vartheta_2(p_i) \defeq b_i$ for all $i < n$;
        \item $\vartheta_1(r_j) = \vartheta_2(r_j) \defeq c_j$ for all $j < m$;
        \item $\vartheta_1(q) = \vartheta_2(q) \defeq \emptyset$ for all
              $q \notin \vec p \cup \bar r$.
      \end{itemize}
      Clearly, $\vartheta_1 \leq_{\vec p} \vartheta_2$. Therefore, $\varphi_\mathfrak{A}(\vec a,
      \bar c) = \vartheta_1(\varphi) \leq \vartheta_2(\varphi) = \varphi_\mathfrak{A}(\vec b, \bar
      c)$.

      \cref{it:me-2} $\Rightarrow$ \cref{it:me-3}. Suppose that \cref{it:me-2} holds. Since
      $\Lambda = \mathrm{Log}\,(\mathfrak{C})$, it is sufficient to show that
      $\mathfrak{A} \vDash \varphi(\vec p, \bar r) \to \varphi(\vec p \vee \vec q, \bar r)$ for all
      $\mathfrak{A} \in \mathfrak{C}$. Let $\vartheta$ be a valuation on
      $\mathfrak{A} \in \mathfrak{C}$. We put $a_i \defeq \vartheta(p_i)$,
      $b_i \defeq \vartheta(p_i \vee q_i)$ for $i < n$, $c_j \defeq \vartheta(r_j)$ for $j < m$.
      Clearly $\vec a \leq \vec b$, whence
      \begin{gather*}
        \vartheta\bigl(\varphi(\vec p, \bar r)\bigr) = \varphi_\mathfrak{A}(\vec a, \bar c) \leq
        \varphi_\mathfrak{A}(\vec b, \bar c) = \vartheta\bigl(\varphi(\vec p \vee \vec q, \bar
        r)\bigr).
      \end{gather*}
      Therefore, $(\mathfrak{A}, \vartheta) \vDash \varphi(\vec p, \bar r) \to \varphi(\vec p \vee
      \vec q, \bar r)$.

      \cref{it:me-3} $\Rightarrow$ \cref{it:me-1}. Suppose that \cref{it:me-3} holds. Let
      $\vartheta_1$ and $\vartheta_2$ be valuations on $\mathfrak{A} \in \mathfrak{C}$ such that
      $\vartheta_1 \leq_{\vec p} \vartheta_2$. Consider the valuation $\vartheta$ on $\mathfrak{A}$
      such that $\vartheta(p_i) = \vartheta_1(p_i)$, $\vartheta(q_i) = \vartheta_2(p_i)$ for
      $i < n$, $\vartheta(r) = \vartheta_1(r) = \vartheta_2(r)$ for $r \notin \vec p \cup \vec q$.
      Then
      \begin{gather*}
        \vartheta_1(\varphi(\vec p, \bar r)) = \vartheta(\varphi(\vec p, \bar r)) \leq
        \vartheta(\varphi(\vec p \vee \vec q, \bar r)) = \vartheta(\varphi(\vec q, \bar r)) =
        \vartheta_2(\varphi(\vec p, \bar r)).
      \end{gather*}
    \end{proof}
    The following fact can be easily checked by induction on construction of a $\vec p$-positive
    formula.
    \begin{proposition}
      \label{pos-are-mon} If $\Lambda$ is monotone, then all $\vec p$-positive formulas are
      $\vec p$-monotone in $\Lambda$. In particular, all positive formulas are monotone in
      $\Lambda$.
    \end{proposition}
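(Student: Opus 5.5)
The plan is to argue algebraically, using \cref{mon-equiv} to reduce $\vec p$-monotonicity in $\Lambda$ to monotonicity of term operations. Let $\mathfrak{C}$ be the class of all $\Lambda$-algebras, so that $\Lambda = \mathrm{Log}\,\mathfrak{C}$; the Lindenbaum--Tarski algebra already witnesses this. Since $\Lambda$ is monotone, every $\mathfrak{A} \in \mathfrak{C}$ is a monotone modal algebra, meaning that $\DmdOp$ preserves $\leq$. Then $\BoxOp = \neg\DmdOp\neg$ also preserves $\leq$, because $a \leq b$ gives $\neg b \leq \neg a$, hence $\DmdOp\neg b \leq \DmdOp \neg a$, hence $\neg\DmdOp\neg a \leq \neg\DmdOp\neg b$. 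By \cref{mon-equiv} (implication \cref{it:me-2} $\Rightarrow$ \cref{it:me-3}), it then suffices to show the following: for every $\vec p$-positive $\varphi(\vec p, \bar r)$, every $\mathfrak{A} \in \mathfrak{C}$ and every $\bar c \in A^m$, the map $\vec a \mapsto \varphi_\mathfrak{A}(\vec a, \bar c)$ is monotone.

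This I will prove by induction on the construction of $\varphi$. I will track literals rather than variables, since formulas are built from literals and constants and negation occurs only on variables. The base cases are as follows.
\begin{itemize}
\item Constants $\bot, \top$ give constant maps.
\item A literal $p_i$ gives a projection, which is monotone.
\item A literal $r_j$ or $\neg r_j$ gives a constant map, since $\bar c$ is fixed.
\end{itemize}
The literal $\neg p_i$ cannot occur, because $\varphi$ is $\vec p$-positive. If some $r_j$ coincides with some $p_i$, the tuple conventions of \cref{l:nnf} separate the two roles, so it is cleaner to take $\bar r$ disjoint from $\vec p$, which costs nothing. For the inductive steps, $\wedge$ and $\vee$ are monotone in each argument in any boolean algebra, so pointwise combinations of monotone maps are monotone. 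For $\DmdOp$ and $\BoxOp$, composing the monotone map given by the induction hypothesis with a monotone operator gives a monotone map. Subformulas of a $\vec p$-positive formula are again $\vec p$-positive, so the induction hypothesis applies.

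The only step that needs care is the monotonicity of $\BoxOp$ from that of $\DmdOp$; it is a one-line computation, so I expect no real obstacle. The ``in particular'' clause is the special case in which $\bar r$ is empty and all variables of $\varphi$ lie in $\vec p$.
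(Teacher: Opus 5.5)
Your proof is correct and is essentially the paper's argument: the paper only says the claim follows by induction on the construction of a $\vec p$-positive formula. Your reduction via \cref{mon-equiv} to monotonicity of the term operations $\vec a \mapsto \varphi_\mathfrak{A}(\vec a, \bar c)$ on $\Lambda$-algebras, with $\BoxOp = \neg\DmdOp\neg$ monotone by duality, is a clean way of carrying out exactly that induction. Your worry about $\bar r$ overlapping $\vec p$ is unnecessary, since by the paper's conventions distinct letters denote distinct variables.
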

    \begin{definition}
      Let $\Lambda$ be a monotone logic. $\Lambda$ has the \emph{Lyndon positivity property (LPP)}
      if every $\vec p$-monotone in $\Lambda$ formula $\varphi(\vec p, \bar r)$ is
      $\Lambda$-equivalent to some $\vec p$-positive formula $\alpha(\vec p, \bar r)$.
    \end{definition}
    Notice that LPP defined here is slightly stronger that LPP as it was defined in \cref{s:intro}:
    we consider formulas with parameters $\bar r$ here. In fact, we will show that this is not
    essential for normal logics (\cref{par-norm}). To investigate this problem more precisely, we
    introduce the following graded versions of LPP:
    \begin{definition}
      Let $n, m \leq \omega$, $\Lambda$ be a monotone logic. We say that $\Lambda$ has
      \emph{${\rm LPP}(n,m)$} if, for all finite $n' \leq n$, $m' \leq m$ and tuples of variables
      $\vec p = (p_i)_{i<n'}$, $\bar r = (r_j)_{j<m'}$, every $\vec p$-monotone in $\Lambda$ formula
      $\varphi(\vec p, \bar r)$ is $\Lambda$-equivalent to some $\vec p$-positive formula
      $\alpha(\vec p, \bar r)$.
    \end{definition}
    Notice that ${\rm LPP}(\omega,\omega)$ is the full LPP, how it is defined above,
    ${\rm LPP}(\omega,0)$ is its non-parametric version from \cref{s:intro}, and
    ${\rm LPP}(0,\omega)$ trivially holds in all logics.
    \begin{lemma}
      Let $n, m < \omega$, $\vec p = (p_i)_{i<n}$, $\bar r = (r_j)_{j<m}$. Suppose that every
      $\vec p$-monotone in $\Lambda$ formula $\varphi(\vec p, \bar r)$ is $\Lambda$-equivalent to
      some $\vec p$-positive formula $\alpha(\vec p, \bar r)$. Then $\Lambda$ has ${\rm LPP}(n,m)$.
    \end{lemma}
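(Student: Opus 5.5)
To prove ${\rm LPP}(n,m)$ we must handle arbitrary $n' \leq n$, $m' \leq m$ and arbitrary tuples $\vec p' = (p'_i)_{i<n'}$, $\bar r' = (r'_j)_{j<m'}$ of distinct variables. The plan is to reduce such a case to the fixed tuples $\vec p$, $\bar r$ by renaming variables and padding with dummy variables.

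\emph{Step 1 (smaller arities).} Let $\varphi(\vec p', \bar r')$ be $\vec p'$-monotone in $\Lambda$. Rename variables by a substitution $s$: send $p'_i \mapsto p_i$ for $i<n'$ and $r'_j \mapsto r_j$ for $j<m'$. Logics are closed under substitution, so $s\varphi$ is $(p_i)_{i<n'}$-monotone in $\Lambda$. Here we also need to match the fresh $\vec q$ in the definition of monotonicity, which is harmless because the monotonicity statement is itself closed under substitution.

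\emph{Step 2 (padding).} View $s\varphi$ as a formula $\varphi^*(\vec p, \bar r)$ in which $p_i$ ($i\geq n'$) and $r_j$ ($j\geq m'$) do not occur. It is $\vec p$-monotone in $\Lambda$, because $\varphi^*(\vec p\vee\vec q, \bar r)$ coincides with $s\varphi((p_i\vee q_i)_{i<n'}, \bar r)$. By hypothesis, $\varphi^* \sim_\Lambda \alpha(\vec p,\bar r)$ for some $\vec p$-positive $\alpha$.

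\emph{Step 3 (eliminating the extra variables).} Substitute $\top$ (or any constant) for $p_i$ with $i\geq n'$ and for $r_j$ with $j\geq m'$. These variables do not occur in $\varphi^*$, so the equivalence is preserved. The resulting $\alpha'$ is still $(p_i)_{i<n'}$-positive, since replacing a variable by a constant introduces no negated $p_i$.

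\emph{Step 4 (renaming back).} Apply the inverse renaming $p_i\mapsto p'_i$, $r_j\mapsto r'_j$. This gives a $\vec p'$-positive formula $\Lambda$-equivalent to $\varphi$.

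The only delicate point is the renaming in Step 1 when the primed tuples overlap the unprimed ones or the auxiliary $\vec q$. To avoid collisions, I would first rename everything to fresh variables and then to the targets. Each individual renaming is an injective substitution with an injective inverse on the relevant variables, so monotonicity and equivalence transfer in both directions.
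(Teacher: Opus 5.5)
Your proof is correct and follows the paper's argument. You view the formula as one in $(\vec p,\bar r)$, apply the hypothesis, and substitute a constant (the paper uses $\bot$) for the surplus variables to get a $\vec p'$-positive equivalent; the only difference is that you make explicit the renaming of arbitrary tuples $\vec p', \bar r'$ onto prefixes of $\vec p, \bar r$ and back, which the paper silently assumes by taking $\vec p' = (p_i)_{i<n'}$ and $\bar r' = (r_j)_{j<m'}$ directly.
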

    \begin{proof}
      Let $\varphi(\vec p', \bar r')$ be a $\vec p'$-monotone in $\Lambda$ formula, where $n' < n$,
      $m' < m$, $\vec p' = (p_i)_{i<n'}$, $\bar r' = (r_j)_{j<m'}$. By the condition of the lemma,
      $\varphi(\vec p', \bar r') \sim_\Lambda \alpha(\vec p, \bar r)$ for some $\vec p$-monotone
      formula $\alpha$. Let $\alpha'(\vec p', \bar r')$ the result of substitution of $\bot$ in
      $\alpha$ for all $p_i, n' \leq i < n$ and all $r_j, m' \leq j < m$. Clearly $\alpha'$ is
      $\vec p'$-positive. Since $\Lambda$ is closed under substitution rule,
      $\varphi \sim_\Lambda \alpha'$.
    \end{proof}
    \begin{proposition}
      \label{lpp-mono} Let $\Lambda$ be a logic with LIP. Then, for all $n, m < \omega$,
      ${\rm LPP}(1,n+m-1) \Rightarrow {\rm LPP}(n,m)$ in $\Lambda$.
    \end{proposition}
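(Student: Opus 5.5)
The plan is to make the variables $p_0,\dots,p_{n-1}$ positive one at a time using ${\rm LPP}(1,n+m-1)$, and then to glue the resulting formulas together with Lyndon interpolants. Interpolants can only use literals common to both sides, so they keep every positivity already achieved.

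By the preceding lemma it suffices to fix $\vec p = (p_i)_{i<n}$ and $\bar r = (r_j)_{j<m}$ and treat a $\vec p$-monotone in $\Lambda$ formula $\varphi(\vec p, \bar r)$; the case $n=0$ is trivial. First, for each $k<n$, $\varphi$ is $p_k$-monotone in $\Lambda$. To see this, substitute $q_i \defeq \bot$ for $i\neq k$ in $\Lambda \vdash \varphi(\vec p,\bar r)\to\varphi(\vec p\vee\vec q,\bar r)$ and use $p_i\vee\bot\sim p_i$ together with closure under equivalent replacement. Now regard $\varphi$ as a formula in the single variable $p_k$ with the $n-1+m$ parameters $(p_i)_{i\neq k},\bar r$. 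By ${\rm LPP}(1,n+m-1)$ there is a $p_k$-positive formula $\alpha_k(\vec p,\bar r)$ with $\varphi\sim_\Lambda\alpha_k$, that is, $\neg p_k\notin\mathrm{lits}(\alpha_k)$.

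Next I would build, by induction on $k<n$, formulas $\iota_k$ with $\iota_k\sim_\Lambda\varphi$, $\mathrm{vars}(\iota_k)\subseteq\vec p\cup\bar r$, and $\mathrm{lits}(\iota_k)\cap\{\neg p_0,\dots,\neg p_k\}=\emptyset$. Start with $\iota_0\defeq\alpha_0$. Given $\iota_k$, we have $\iota_k\preceq_\Lambda\varphi\preceq_\Lambda\alpha_{k+1}$, so LIP yields a Lyndon interpolant $\iota_{k+1}$ with $\mathrm{lits}(\iota_{k+1})\subseteq\mathrm{lits}(\iota_k)\cap\mathrm{lits}(\alpha_{k+1})$. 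This set contains none of $\neg p_0,\dots,\neg p_k$ (excluded by $\iota_k$) and not $\neg p_{k+1}$ (excluded by $\alpha_{k+1}$). Moreover $\iota_k\preceq_\Lambda\iota_{k+1}\preceq_\Lambda\alpha_{k+1}\sim_\Lambda\varphi\sim_\Lambda\iota_k$, so $\iota_{k+1}\sim_\Lambda\varphi$. Its variables stay among $\vec p\cup\bar r$ because its literals are among those of $\iota_k$.

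Then $\alpha\defeq\iota_{n-1}$ is the required $\vec p$-positive formula $\alpha(\vec p,\bar r)$ equivalent to $\varphi$. The only point needing care is the one just handled: applying LPP to one variable at a time may reintroduce negations of earlier variables. This is exactly why one takes interpolants rather than iterating LPP on the previous output; LIP guarantees that the accumulated set of forbidden negative literals only grows.
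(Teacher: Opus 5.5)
Your proof is correct and is essentially the paper's argument. The paper inducts on the number of monotone variables: it takes a $\vec p$-positive equivalent from ${\rm LPP}(n-1,s-n+1)$, takes a $q$-positive equivalent from ${\rm LPP}(1,s-1)$, and merges the two via a Lyndon interpolant. Your chain $\iota_0,\dots,\iota_{n-1}$ is that induction unrolled at fixed $n,m$, and you also make explicit the step, implicit in the paper, that monotonicity in all of $\vec p$ yields monotonicity in each single $p_k$ (substituting $\bot$ for the other $q_i$).
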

    \begin{proof}
      Suppose that $\Lambda$ has ${\rm LPP}(1,s-1)$. We prove that it has ${\rm LPP}(n,s-n)$ for
      $n = 1, \dots, s$ by induction on $n$. The base case is trivial. Suppose that $1 < n \leq s$
      and ${\rm LPP}(n-1,s-n+1)$ holds. Let $\vec p = (p_i)_{i<n-1}$, $\bar r = (r_j)_{j<s-n}$,
      $\varphi(\vec p, q, \bar r)$ be a $(\vec p \cup \{q\})$-monotone in $\Lambda$ formula. By the
      induction hypothesis, there is a $\vec p$-positive formula $\alpha(\vec p, q, \bar r)$ such
      that $\varphi \sim_\Lambda \alpha$. By ${\rm LPP}(1,s-1)$, there is a $q$-positive formula
      $\beta(\vec p, q, \bar r)$ such that $\varphi \sim_\Lambda \beta$. Since
      $\alpha \preceq_\Lambda \beta$, there is a Lyndon interpolant $\iota$ for $\alpha$ and
      $\beta$. Clearly, $\iota$ is $(\vec p \cup \{q\})$-positive and $\iota \sim_\Lambda \varphi$.
    \end{proof}
    \begin{corollary}
      If $\Lambda$ has LIP and ${\rm LPP}(1,\omega)$, then $\Lambda$ has LPP.
    \end{corollary}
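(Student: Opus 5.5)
The corollary follows by reducing full LPP to its finite graded instances and then applying \cref{lpp-mono}. By definition, LPP is ${\rm LPP}(\omega,\omega)$. This means that for all finite $n', m'$ and tuples $\vec p = (p_i)_{i<n'}$, $\bar r = (r_j)_{j<m'}$, every $\vec p$-monotone formula $\varphi(\vec p, \bar r)$ must be $\Lambda$-equivalent to a $\vec p$-positive formula $\alpha(\vec p, \bar r)$. So it suffices to show that $\Lambda$ has ${\rm LPP}(n,m)$ for every pair of finite $n, m$.

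Fix finite $n, m$. The case $n = 0$ is trivial, since ${\rm LPP}(0,\omega)$ holds in every logic. So assume $n \geq 1$. Then $n + m - 1 < \omega$, and ${\rm LPP}(1,\omega)$ directly gives ${\rm LPP}(1, n+m-1)$. The reason is that the definition of ${\rm LPP}(1,\omega)$ quantifies over all finite $m'' \leq \omega$, and in particular over all $m'' \leq n+m-1$, so ${\rm LPP}(1,k) \Rightarrow {\rm LPP}(1,k')$ whenever $k' \leq k$.

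Since $\Lambda$ has LIP, \cref{lpp-mono} now yields ${\rm LPP}(n,m)$. Hence $\Lambda$ has ${\rm LPP}(n,m)$ for all finite $n, m$. Because the definition of ${\rm LPP}(\omega,\omega)$ refers only to finite $n' \leq \omega$ and $m' \leq \omega$, this is exactly full LPP.

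There is no substantive obstacle here. The only care needed is the bookkeeping of the graded indices: we need the downward monotonicity of ${\rm LPP}(n,m)$ in both arguments, which holds by definition, and the degenerate case $n = 0$, where the index $n+m-1$ in \cref{lpp-mono} would not make sense and which we dispatch separately as above.
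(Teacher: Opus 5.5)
Your argument is correct and is the intended one. The paper gives no separate proof and treats the corollary as immediate from \cref{lpp-mono}: ${\rm LPP}(1,\omega)$ yields ${\rm LPP}(1,n+m-1)$ for all finite $n \geq 1$ and $m$, and LPP is by definition the conjunction of ${\rm LPP}(n,m)$ over finite $n,m$. Your explicit handling of the degenerate case $n=0$ is bookkeeping the paper leaves implicit.
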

  \subsection{Generalized interpolation}
    \begin{definition}
      Let $\tau \subseteq \mathcal{V}^\pm$, $\varphi, \psi \in \mathrm{Fm}$. We say that a formula
      $\iota$ is a \emph{$\tau$-interpolant} for $\varphi$ and $\psi$ if
      $\mathrm{lits}(\iota) \subseteq \tau$ and
      $\varphi \preceq_\Lambda \iota \preceq_\Lambda \psi$.
    \end{definition}
    Notice that
    \begin{itemize}
      \item $\iota$ is a Lyndon interpolant for $\varphi$ and $\psi$ iff $\iota$ is a
            $\tau$-interpolant for these formulas, where
            $\tau \defeq \mathrm{lits}(\varphi) \cap \mathrm{lits}(\psi)$.
      \item $\iota$ is a Craig interpolant for $\varphi$ and $\psi$ iff $\iota$ is a
            $\tau$-interpolant for these formulas, where
            $\tau \defeq \mathrm{vars}(\varphi)^\pm \cap \mathrm{vars}(\psi)^\pm$.
      \item $\iota$ is a $\vec p$-positive formula which is $\Lambda$-equivalent to $\varphi$ iff
            $\iota$ is a $\tau$-interpolant for $\varphi$ and $\varphi$, where
            $\tau \defeq \nu^\pm \setminus \neg\vec p$.
    \end{itemize}
    In fact, $\sigma^\pm$-interpolants for $\sigma \subseteq \mathcal{V}$ can be derived from Craig
    interpolants by the following trick:
    \begin{proposition}
      \label{gen-craig-trick} Let $\varphi$ and $\psi(\vec p, \bar q)$ be formulas, $\bar r$ be a
      tuple of fresh variables. Then $\iota$ is a $\vec p^\pm$-interpolant for $\varphi$ and $\psi$
      iff $\iota$ is a Craig interpolant for $\varphi \wedge \eta$ and
      $\psi(\vec p, \bar r) \wedge \eta$, where $\eta \defeq \bigvee_{i<n}p_i \vee \top$.
    \end{proposition}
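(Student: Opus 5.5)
The proof is a direct check. It rests on two observations. First, $\eta$ is a classical tautology, so attaching it changes nothing up to $\Lambda$-equivalence. Second, $\eta$ forces every $p_i$ into the variable sets of both formulas, while the fresh $\bar r$ removes $\bar q$ from the shared vocabulary.

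First I will record the basic facts. Since $\eta = \bigvee_{i<n}p_i \vee \top$ is a classical tautology and $\Lambda$ contains all classical tautologies, we have $\varphi \wedge \eta \sim_\Lambda \varphi$ and $\psi(\vec p, \bar r) \wedge \eta \sim_\Lambda \psi(\vec p, \bar r)$. Next I compute the variable sets. We have $\mathrm{vars}(\varphi \wedge \eta) = \mathrm{vars}(\varphi) \cup \vec p$ and $\mathrm{vars}(\psi(\vec p, \bar r) \wedge \eta) \subseteq \vec p \cup \bar r$, with $\vec p$ contained in both. The tuple $\bar r$ is fresh, meaning disjoint from $\mathrm{vars}(\varphi)$, $\vec p$ and $\bar q$. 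Hence the common variables are exactly $\vec p$. It follows that a formula $\iota$ satisfies the variable condition for a Craig interpolant of these two formulas iff $\mathrm{vars}(\iota) \subseteq \vec p$. This in turn holds iff $\mathrm{lits}(\iota) \subseteq \vec p^\pm$.

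It remains to match the entailment conditions for any $\iota$ with $\mathrm{vars}(\iota) \subseteq \vec p$.
\begin{itemize}
  \item The left conditions agree: $\varphi \wedge \eta \preceq_\Lambda \iota$ iff $\varphi \preceq_\Lambda \iota$, by the equivalence above.
  \item On the right, $\iota \preceq_\Lambda \psi(\vec p, \bar r) \wedge \eta$ iff $\iota \preceq_\Lambda \psi(\vec p, \bar r)$.
  \item This last condition is equivalent to $\iota \preceq_\Lambda \psi(\vec p, \bar q)$ by closure of $\Lambda$ under substitution. Substitute $\bar r \mapsto \bar q$ in one direction and $\bar q \mapsto \bar r$ in the other.
  \item Both substitutions leave $\iota$ and the $\vec p$-part fixed. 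This is because $\iota$ mentions neither $\bar q$ nor $\bar r$, and $\bar q$, $\bar r$ are disjoint from $\vec p$.
\end{itemize}
Combining these, $\iota$ is a Craig interpolant for $\varphi \wedge \eta$ and $\psi(\vec p, \bar r) \wedge \eta$ iff $\mathrm{lits}(\iota) \subseteq \vec p^\pm$ and $\varphi \preceq_\Lambda \iota \preceq_\Lambda \psi$. That is exactly the definition of a $\vec p^\pm$-interpolant.

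The only delicate point is the bookkeeping of variables: one must check that the substitution $\bar q \mapsto \bar r$ genuinely strips $\bar q$ from the shared vocabulary. This matters when $\bar q$ also occurs in $\varphi$. It must also not disturb $\iota$, which is why freshness of $\bar r$ with respect to all of $\varphi$, $\vec p$ and $\bar q$ is needed. Everything else is routine propositional reasoning inside $\Lambda$.
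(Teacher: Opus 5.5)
Your proof is correct and follows essentially the same route as the paper: the shared vocabulary of $\varphi \wedge \eta$ and $\psi(\vec p, \bar r) \wedge \eta$ is exactly $\vec p$, and $\eta$ is a tautology, so conjoining it does not change $\Lambda$-equivalence. The substitution rule then transfers $\iota \preceq_\Lambda \psi(\vec p, \bar r)$ to $\iota \preceq_\Lambda \psi(\vec p, \bar q)$ and back, because $\iota$ mentions neither $\bar q$ nor $\bar r$; you just spell out the variable bookkeeping (including $\mathrm{vars}(\iota) \subseteq \vec p \Leftrightarrow \mathrm{lits}(\iota) \subseteq \vec p^\pm$) that the paper treats as clear.
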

    \begin{proof}
      Clearly, $\mathrm{vars}(\varphi \wedge \eta)^\pm \cap \mathrm{vars}(\psi(\vec p, \bar r)
      \wedge \eta) = \vec p$. Notice that $\varphi \wedge \eta$ and
      $\psi(\vec p, \bar r) \wedge \eta$ are equivalent to $\varphi$ and $\psi(\vec p, \bar r)$
      respectively. Moreover, since neither $\bar q$ nor $\bar r$ occur in $\iota$,
      \begin{gather*}
        \iota(\vec p) \preceq_\Lambda \psi(\vec p, \bar q) \Leftrightarrow \iota(\vec p)
        \preceq_\Lambda \psi(\vec p, \bar r)
      \end{gather*}
      by the substitution rule.
    \end{proof}
    However, it is unclear wether we can derive $\tau$-interpolants from Lyndon interpolants in a
    similar way.
  \subsection{Interpolation and canonical frames}
    Results from these section are variants of well-known results from classical model theory and
    modal logic (see, e.g.,~\cite[Chapter 8]{Hodg93} and~\cite[Chapter 5, Section 3.7]{HoML}).
    \begin{lemma}
      Let $I$ be a set of formulas closed under $\vee$ and $\wedge$. Then, for each
      $\varphi, \psi \in \mathrm{Fm}$, exactly one of the following holds:
      \begin{itemize}
        \item there is $i \in I$ such that $\varphi \preceq_\Lambda \iota \preceq_\Lambda \psi$;
        \item there are $w \in \llbracket\varphi\rrbracket$ and $v \in \llbracket\neg\psi\rrbracket$
              such that $w \cap I \subseteq v$.
      \end{itemize}
    \end{lemma}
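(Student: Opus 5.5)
Note first that the symbol $i$ in the first alternative is meant to be $\iota$: the claim is that either some $\iota \in I$ satisfies $\varphi \preceq_\Lambda \iota \preceq_\Lambda \psi$, or there are maximal consistent sets $w \ni \varphi$ and $v \ni \neg\psi$ with $w \cap I \subseteq v$. The plan is to prove that the two alternatives are mutually exclusive and that at least one of them holds, with Lindenbaum's lemma and compactness (built into the finitary definition of $\vdash_\Lambda$) as the only tools. Throughout we tacitly assume $I$ is nonempty, or contains $\top$, so that empty conjunctions and disjunctions make sense; this matches the intended applications, where $I = \mathrm{Fm}_\tau$ contains the constants.

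\emph{Exclusivity.} Suppose $\iota \in I$ satisfies $\varphi \preceq_\Lambda \iota \preceq_\Lambda \psi$, and suppose $w, v$ are as in the second alternative. Since $\varphi \in w$ and $\varphi \to \iota \in \Lambda \subseteq w$, maximal consistency gives $\iota \in w$. Hence $\iota \in w \cap I \subseteq v$. Then $\iota \to \psi \in \Lambda$ gives $\psi \in v$, which contradicts $\neg\psi \in v$ and the consistency of $v$.

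\emph{At least one alternative holds.} Assume no interpolant from $I$ exists. Consider the set $\Delta \defeq \{\varphi\} \cup \{\neg\iota \mid \iota \in I,\ \iota \preceq_\Lambda \psi\}$.

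First we show that $\Delta$ is $\Lambda$-consistent. Otherwise there are finitely many $\iota_1,\dots,\iota_k \in I$ with $\iota_j \preceq_\Lambda \psi$ such that $\varphi \preceq_\Lambda \iota_1 \vee \dots \vee \iota_k$. The disjunction lies in $I$ because $I$ is closed under $\vee$ (if $k = 0$, use $\bot \preceq \psi$, via an element of $I$ below everything or the trivial case $\varphi \preceq \bot$). It is also $\preceq_\Lambda \psi$, so it is an interpolant, a contradiction. By Lindenbaum's lemma we then take $w \in \llbracket\Delta\rrbracket$.

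Next we show that $\Theta \defeq (w \cap I) \cup \{\neg\psi\}$ is $\Lambda$-consistent. Otherwise there are $\iota_1,\dots,\iota_k \in w \cap I$ with $\iota_1 \wedge \dots \wedge \iota_k \preceq_\Lambda \psi$. The conjunction $\iota$ lies in $I$ because $I$ is closed under $\wedge$, and $\iota \in w$. Since $\iota \preceq_\Lambda \psi$, the formula $\neg\iota$ belongs to $\Delta \subseteq w$, so $w$ is inconsistent, a contradiction. Taking $v \in \llbracket\Theta\rrbracket$ by Lindenbaum's lemma gives $w \in \llbracket\varphi\rrbracket$, $v \in \llbracket\neg\psi\rrbracket$ and $w \cap I \subseteq v$.

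The main obstacle is the two-stage choice of maximal sets. One might first try to separate $\varphi$ from $\neg\psi$ directly, but the right move is to build $w$ so that it already refutes every $I$-formula below $\psi$. With that choice, closure under $\wedge$ is exactly what makes $\Theta$ consistent. The only other delicate point is the degenerate empty case, handled by the convention above.
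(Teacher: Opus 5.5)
Your proof is correct and is essentially the paper's two-stage Lindenbaum argument run in mirror image: the paper first builds $v \ni \neg\psi$ containing every $\alpha \in I$ with $\varphi \preceq_\Lambda \alpha$ (using $\wedge$-closure) and then $w \ni \varphi$ avoiding $I \setminus v$ (using $\vee$-closure), whereas you first build $w$ refuting every $I$-formula below $\psi$ and then build $v$. On the degenerate cases, the correct convention is that $I$ contains formulas equivalent to both $\top$ and $\bot$, which the paper also uses tacitly through $\bigwedge\emptyset$ and $\bigvee\emptyset$ and which holds for $I=\mathrm{Fm}_\tau$; mere nonemptiness is not enough, since for $I$ generated by $\top$ and $\varphi=\psi=\bot$ both alternatives fail in any consistent $\Lambda$.
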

    \begin{proof}
      It is easy to see that both conditions can not hold at once. Indeed, if
      $\varphi \preceq_\Lambda \iota \preceq_\Lambda \psi$ for some $i \in I$,
      $w \in \llbracket\varphi\rrbracket$, and $v \in \llbracket\neg\psi\rrbracket$, then
      $\iota \in w$ and $\iota \notin v$, whence $w \cap I \nsubseteq v$. Therefore, it is
      sufficient to show that at least one of the conditions must be true. Consider the following
      set of formulas:
      \begin{gather*}
        A \defeq \{\alpha \in I \mid \varphi \preceq_\Lambda \alpha\}.
      \end{gather*}
      If $A \cup \{\neg\psi\}$ is $\Lambda$-inconsistent, then there is a finite $A_0 \subseteq A$
      such that ${A_0 \vdash_\Lambda \psi}$. Let $\iota \defeq \bigwedge A_0$. Clearly,
      $\varphi \preceq_\Lambda \iota \preceq_\Lambda \psi$. Since $I$ is closed under conjuctions,
      $\iota \in I$.

      Now suppose that $A \cup \{\neg\psi\}$ is $\Lambda$-consistent. Then, by Lindenbaum's lemma,
      there is $v \in \llbracket\neg\psi\rrbracket$ such that $A \subseteq v$. Let
      $B \defeq I \setminus v$.

      If $\neg B \cup \{\varphi\}$ is $\Lambda$-inconsistent, then there is a finite
      $B_0 \subseteq B$ such that $\varphi \preceq_\Lambda \bigvee B_0$. Since $I$ is closed under
      disjunctions, $\bigvee B_0 \in I$. Therefore, $\bigvee B_0 \in A \subseteq v$, whence there is
      $\beta \in B_0$ such that $\beta \in v$. But $B_0 \subseteq B = I \setminus v$. Contradiction.

      Thus, $\neg B \cup \{\varphi\}$ is $\Lambda$-consistent and, by Lindenbaum's lemma, there is
      $w \in \llbracket\varphi\rrbracket$ such that $w \cap B = \emptyset$. Clearly,
      $w \cap I \subseteq I \setminus B \subseteq v$.
    \end{proof}
    For a set of literals $\tau$, consider the preorder on $W_\Lambda$:
    \begin{gather*}
      w \trianglelefteq_\tau v :\Leftrightarrow w \cap \mathrm{Fm}_\tau \subseteq v.
    \end{gather*}
    Notice that $\trianglelefteq_\tau^{-1} = \trianglelefteq_{\neg\tau}$. In particular,
    $\trianglelefteq_{\sigma^\pm}$ for $\sigma \subseteq \mathcal{V}$ is in fact an equivalence
    relation.
    \begin{corollary}
      \label{interp-stone} Let $\tau$ be a set of literals. Then, for each
      $\varphi, \psi \in \mathrm{Fm}$, exactly one of the following holds:
      \begin{itemize}
        \item there is a $\tau$-interpolant for $\varphi$ and $\psi$ in $\Lambda$;
        \item there are $w \in \llbracket\varphi\rrbracket$ and $v \in \llbracket\neg\psi\rrbracket$
              such that $w \trianglelefteq_\tau v$.
      \end{itemize}
    \end{corollary}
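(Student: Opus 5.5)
The plan is to apply the preceding lemma with $I \defeq \mathrm{Fm}_\tau$. The first thing to check is that $\mathrm{Fm}_\tau$ is closed under $\vee$ and $\wedge$. This is immediate: $\mathrm{lits}(\alpha \vee \beta) = \mathrm{lits}(\alpha \wedge \beta) = \mathrm{lits}(\alpha) \cup \mathrm{lits}(\beta)$, so if both $\alpha$ and $\beta$ have literals in $\tau$, then so do $\alpha \vee \beta$ and $\alpha \wedge \beta$. Note that the constants $\bot$ and $\top$ contribute no literals, so they lie in $\mathrm{Fm}_\tau$ as well, though the lemma does not need this.

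The lemma then says that exactly one of two alternatives holds. The first alternative is that there is $\iota \in \mathrm{Fm}_\tau$ with $\varphi \preceq_\Lambda \iota \preceq_\Lambda \psi$. By the definition of a $\tau$-interpolant, this is exactly the existence of a $\tau$-interpolant for $\varphi$ and $\psi$ in $\Lambda$.

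The second alternative is that there are $w \in \llbracket\varphi\rrbracket$ and $v \in \llbracket\neg\psi\rrbracket$ with $w \cap \mathrm{Fm}_\tau \subseteq v$. By the definition of the preorder, this says precisely that $w \trianglelefteq_\tau v$.

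Substituting these two translations into the lemma gives the corollary: exactly one of the two stated conditions holds. No step here is a genuine obstacle. The only point needing care is the closure check in the first paragraph, and it is immediate because the connectives $\vee$ and $\wedge$ introduce no new literals.
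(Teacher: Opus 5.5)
Your proof is correct and is exactly the intended derivation (the paper gives no separate proof): instantiate the preceding lemma with $I = \mathrm{Fm}_\tau$, then read the two alternatives as the definitions of a $\tau$-interpolant and of $w \trianglelefteq_\tau v$. One small correction to your aside: the lemma's proof does use $\top, \bot \in I$ in degenerate cases, namely when $\Lambda \vdash \psi$ or $\varphi$ is $\Lambda$-inconsistent, where the finite sets $A_0$ or $B_0$ there may be empty, so it matters that $\mathrm{Fm}_\tau$ contains the constants.
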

    \begin{proposition}
      \label{can-sim} $\trianglelefteq_\tau$ is a full $\tau$-bisimulation on both
      $\mathfrak{M}^\sigma_\Lambda$ and $\mathfrak{M}^\pi_\Lambda$.
    \end{proposition}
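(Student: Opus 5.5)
The plan is to dispose of fullness and $({\rm lit})$ directly, reduce both back-and-forth conditions to $({\rm zig})$ alone, and then verify $({\rm zig})$ separately for the two canonical operators using the two-step descriptions \cref{ce:cl,ce:arb} together with compactness.

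\emph{Fullness and $({\rm lit})$.} Fullness is immediate because $\trianglelefteq_\tau$ is reflexive. For $({\rm lit})$, take $l\in\tau$, $w\in\vartheta_\Lambda(l)=\llbracket l\rrbracket$ and $w\trianglelefteq_\tau v$. Then $l\in w\cap\mathrm{Fm}_\tau\subseteq v$.

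\emph{Reduction to $({\rm zig})$.} Condition $({\rm zag})$ for $Z=\trianglelefteq_\tau$ says $Z^{-1}\DmdOp X\subseteq\DmdOp Z^{-1}X$. This is literally $({\rm zig})$ for $Z^{-1}=\trianglelefteq_{\neg\tau}$ on the same frame. So it suffices to prove, for \emph{every} set of literals $\tau$ and for both operators $\DmdOp^\pi_\Lambda$ and $\DmdOp^\sigma_\Lambda$, that
\begin{gather*}
\trianglelefteq_\tau(\DmdOp X)\subseteq\DmdOp(\trianglelefteq_\tau X)\quad\text{for all }X\subseteq W_\Lambda.
\end{gather*}
One could instead handle one frame and transfer to the other via the duality proposition, since $\mathrm{Fm}_\tau$ is closed under ${\cdot}^d$. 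Doing both cases directly is just as short, so I will do that.

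\emph{A compactness fact.} The first step is to show, by a routine compactness argument as in the proof of \cref{interp-stone}, that both the upward and the downward closure of a closed set $\llbracket\Gamma\rrbracket$ under $\trianglelefteq_\tau$ are again closed:
\begin{gather*}
\{u\mid \exists x\in\llbracket\Gamma\rrbracket\,(x\trianglelefteq_\tau u)\}=\llbracket\Gamma^+\rrbracket,\quad \Gamma^+\defeq\{\alpha\in\mathrm{Fm}_\tau\mid\Gamma\vdash_\Lambda\alpha\},\\
\{x\mid \exists u\in\llbracket\Gamma\rrbracket\,(x\trianglelefteq_\tau u)\}=\llbracket\Gamma^-\rrbracket,\quad \Gamma^-\defeq\{\beta\in\mathrm{Fm}_{\neg\tau}\mid\Gamma\vdash_\Lambda\beta\}.
\end{gather*}
For the second equality, suppose $x\in\llbracket\Gamma^-\rrbracket$. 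If $(x\cap\mathrm{Fm}_\tau)\cup\Gamma$ were inconsistent, we would get $\alpha\in x\cap\mathrm{Fm}_\tau$ with $\Gamma\vdash\neg\alpha$, using closure of $\mathrm{Fm}_\tau$ under $\wedge$. Then $\neg\alpha\in\Gamma^-\subseteq x$, a contradiction. Hence the set is consistent, and Lindenbaum's lemma produces the required $u$. The first equality is symmetric.

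\emph{$({\rm zig})$ for $\DmdOp^\sigma_\Lambda$.} Let $w\in\DmdOp^\sigma X$ and $w\trianglelefteq_\tau v$. By \cref{ce:arb} there is $\Gamma$ with $\llbracket\Gamma\rrbracket\subseteq X$ and $w\in\DmdOp^\sigma\llbracket\Gamma\rrbracket$. Then $\llbracket\Gamma^+\rrbracket\subseteq\trianglelefteq_\tau X$, so by monotonicity it suffices to show $v\in\DmdOp^\sigma\llbracket\Gamma^+\rrbracket$. By \cref{ce:cl} this means $\DmdOp\varphi\in v$ whenever $\llbracket\Gamma^+\rrbracket\subseteq\llbracket\varphi\rrbracket$.

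Fix such a $\varphi$. Compactness gives $\alpha\in\mathrm{Fm}_\tau$ with $\Gamma\vdash\alpha$ and $\alpha\vdash\varphi$. Then $\DmdOp\alpha\in w$ by \cref{ce:cl}. Since $\DmdOp\alpha\in\mathrm{Fm}_\tau$, we get $\DmdOp\alpha\in v$, and ${\rm AM}$ yields $\DmdOp\varphi\in v$.

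\emph{$({\rm zig})$ for $\DmdOp^\pi_\Lambda$.} Let $w\in\DmdOp^\pi X$ and $w\trianglelefteq_\tau v$. Take any $\Gamma$ with $\trianglelefteq_\tau X\subseteq-\llbracket\Gamma\rrbracket$. Then $X$ is disjoint from the down-closure of $\llbracket\Gamma\rrbracket$, that is, $X\subseteq-\llbracket\Gamma^-\rrbracket$. Hence $w\in\DmdOp^\pi(-\llbracket\Gamma^-\rrbracket)$, so by \cref{ce:cl} there is $\varphi$ with $\DmdOp\varphi\in w$ and $\llbracket\varphi\rrbracket\cap\llbracket\Gamma^-\rrbracket=\emptyset$.

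Compactness gives $\beta\in\mathrm{Fm}_{\neg\tau}$ with $\Gamma\vdash\beta$ and $\varphi\vdash\neg\beta$. By ${\rm AM}$, $\DmdOp\neg\beta\in w$. Since $\DmdOp\neg\beta\in\mathrm{Fm}_\tau$, it lies in $v$. Moreover $\llbracket\neg\beta\rrbracket\subseteq-\llbracket\Gamma\rrbracket$, so $v\in\DmdOp^\pi(-\llbracket\Gamma\rrbracket)$. As $\Gamma$ was arbitrary, $v\in\DmdOp^\pi(\trianglelefteq_\tau X)$ by \cref{ce:arb}.

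\emph{Main obstacle.} The only delicate point is the closure computation for $\Gamma^\pm$. It is what lets us replace the arbitrary set $\trianglelefteq_\tau X$ by a closed or open set definable by formulas in $\mathrm{Fm}_{\pm\tau}$. Everything else is bookkeeping with \cref{ce:cl,ce:arb} and ${\rm AM}$.
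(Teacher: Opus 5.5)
Your proof is correct, and for $\DmdOp^\sigma_\Lambda$ it is the paper's argument: your $\Gamma^+$ is the paper's $\Delta$, and the paper also reduces $({\rm zag})$ to $({\rm zig})$ for $\trianglelefteq_{\neg\tau}$ and to the case $X=\llbracket\Gamma\rrbracket$. The only difference is that the paper handles the other frame through the duality $(\mathfrak{M}^\sigma_\Lambda)^d\cong\mathfrak{M}^\pi_{\Lambda^d}$, whereas you check $({\rm zig})$ for $\DmdOp^\pi_\Lambda$ directly via the down-closure $\llbracket\Gamma^-\rrbracket$; that mirror-image argument is valid and avoids relying on the duality proposition.
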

    \begin{proof}
      By duality, it is sufficient to consider $\mathfrak{M}^\pi_\Lambda$. It is clear that
      $\trianglelefteq_\tau$ is full and the condition $({\rm lit})$ is satisfied. Since
      ${\trianglelefteq_\tau^{-1}} = {\trianglelefteq_{\neg\tau}}$, $({\rm zag}')$ for
      $\trianglelefteq_\tau$ follows from $({\rm zig})$ for $\trianglelefteq_{\neg\tau}$. Thus, it
      is sufficient to show that
      \begin{gather*}
        \A{ X \subseteq W_\Lambda} \bigl({\trianglelefteq_\tau}(\DmdOp^\sigma_\Lambda X) \subseteq
        \DmdOp^\sigma_\Lambda({\trianglelefteq_\tau} X)\bigr).
      \end{gather*}
      Also notice that, since
      \begin{gather*}
        {\trianglelefteq_\tau}(\DmdOp^\sigma_\Lambda X) =
        {\trianglelefteq_\tau}\left(\bigcup_{\Gamma : \llbracket\Gamma\rrbracket \subseteq
        X}\DmdOp^\sigma_\Lambda\llbracket\Gamma\rrbracket\right) = \bigcup_{\Gamma :
        \llbracket\Gamma\rrbracket \subseteq
        X}{\trianglelefteq_\tau}(\DmdOp^\sigma_\Lambda\llbracket\Gamma\rrbracket)
      \end{gather*}
      and, for $\llbracket\Gamma\rrbracket \subseteq X$,
      $\DmdOp^\sigma_\Lambda({\trianglelefteq_\tau} \llbracket\Gamma\rrbracket) \subseteq
      \DmdOp^\sigma_\Lambda({\trianglelefteq_\tau} X)$, we can assume that
      $X = \llbracket\Gamma\rrbracket$.

      Suppose that
      $w_2 \in {\trianglelefteq_\tau}(\DmdOp^\sigma_\Lambda \llbracket\Gamma\rrbracket)$, that is,
      there is $w_1 \in \DmdOp^\sigma_\Lambda\llbracket\Gamma\rrbracket$ such that
      $w_1 \trianglelefteq_\tau w_2$. We need to show that
      $w_2 \in \DmdOp^\sigma_\Lambda({\trianglelefteq_\tau} \llbracket\Gamma\rrbracket)$. Consider
      the set of formulas
      $\Delta \defeq \{\psi \in \mathrm{Fm}_\tau \mid \Gamma \vdash_\Lambda \psi\}$. We will show
      that $w_2 \in \DmdOp^\sigma_\Lambda\llbracket\Delta\rrbracket$ and
      $\llbracket\Delta\rrbracket \subseteq {\trianglelefteq_\tau}\llbracket\Gamma\rrbracket$. Then,
      by monotonicity,
      $w_2 \in \DmdOp^\sigma_\Lambda({\trianglelefteq_\tau}\llbracket\Gamma\rrbracket)$.

      To prove that $w_2 \in \DmdOp^\sigma_\Lambda\llbracket\Delta\rrbracket =
      \llbracket\{\DmdOp\varphi \mid \Delta \vdash_\Lambda \varphi\}\rrbracket$, we fix some
      $\varphi$ such that $\Delta \vdash_\Lambda \varphi$. Then, since $\Delta$ is closed under
      conjunctions, there is $\psi \in \Delta$ such that $\psi \preceq_\Lambda \varphi$. Notice that
      $w_1 \in \DmdOp^\sigma_\Lambda\llbracket\Gamma\rrbracket \subseteq
      \llbracket\DmdOp\psi\rrbracket$. Since $w_1 \trianglelefteq_\tau w_2$ and
      $\DmdOp\psi \in \mathrm{Fm}_\tau$,
      $w_2 \in \llbracket\DmdOp\psi\rrbracket \subseteq \llbracket\DmdOp\varphi\rrbracket$.

      To prove that
      $\llbracket\Delta\rrbracket \subseteq {\trianglelefteq_\tau}\llbracket\Gamma\rrbracket$, we
      fix some $v_2 \in \llbracket\Delta\rrbracket$. Then we put
      $\Sigma \defeq \mathrm{Fm}_\tau \setminus v_2$ and consider the set $\Gamma \cup \neg\Sigma$.
      If $\Gamma \cup \neg\Sigma$ is $\Lambda$-consistent, then, by Lindenbaum's lemma, there is
      $v_1 \supseteq \Gamma \cup \neg\Sigma$. Clearly, $v_1 \in \llbracket\Gamma\rrbracket$ and
      $v_1 \trianglelefteq_\tau v_2$, whence
      $v_2 \in {\trianglelefteq_\tau}\llbracket\Gamma\rrbracket$. Now, suppose that
      $\Gamma \cup \neg\Sigma$ is $\Lambda$-inconsistent. Then, since $\Sigma$ is closed under
      disjunctions, there is $\chi \in \Sigma$ such that $\Gamma \vdash_\Lambda \chi$. Since
      $\Sigma \subseteq \mathrm{Fm}_\tau$, $\chi \in \Delta \subseteq v_2$. But
      $\Sigma \cap v_2 = \emptyset$. Contradiction.
    \end{proof}

    Let $\mathfrak{M}_k = (W_k, \DmdOp_k, \vartheta_k)$, $k = 1,2$ be models,
    $Z \subseteq W_1 \times W_2$, $\varphi, \psi \in \mathrm{Fm}$. We say that \emph{$\varphi$
    entails $\psi$ under $Z$} if
    \begin{gather*}
      \A{(w_1, w_2) \in Z} (\mathcal{M}_1, w_1 \vDash \varphi \Rightarrow \mathcal{M}_2, w_2 \vDash
      \psi).
    \end{gather*}
    Notice that $\varphi$ is preserved under $Z$ in the sense of the definition from \cref{ss:bisim}
    iff $\varphi$ entails itself under $Z$.
    \begin{corollary}
      \label{lam-char} Let $\Lambda$ be a monotone logic, $\tau$ be a set of literals. Then the
      following conditions on $\varphi, \psi \in \mathrm{Fm}$ are equivalent:
      \begin{enumerate}
        \item there is a $\tau$-interpolant for $\varphi$ and $\psi$ in $\Lambda$; \label{it:lc-1}
        \item $\varphi$ entails $\psi$ under all $\tau$-bisimulations between $\Lambda$-models;
              \label{it:lc-2}
        \item $\varphi$ entails $\psi$ under $\trianglelefteq_\tau$ on $\mathfrak{M}_\Lambda^\sigma$
              (or on $\mathfrak{M}_\Lambda^\pi$). \label{it:lc-3}
      \end{enumerate}
    \end{corollary}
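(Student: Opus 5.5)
We will prove the cycle of implications \cref{it:lc-1} $\Rightarrow$ \cref{it:lc-2} $\Rightarrow$ \cref{it:lc-3} $\Rightarrow$ \cref{it:lc-1}.

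\emph{\cref{it:lc-1} $\Rightarrow$ \cref{it:lc-2}.} Let $\iota \in \mathrm{Fm}_\tau$ satisfy $\varphi \preceq_\Lambda \iota \preceq_\Lambda \psi$. Let $Z$ be a $\tau$-bisimulation between $\Lambda$-models $\mathfrak{M}_1$ and $\mathfrak{M}_2$. Both models lie on monotone frames, since $\Lambda$ is monotone and hence so are the underlying algebras, so \cref{bisim-main} applies. Take $(w_1, w_2) \in Z$ with $\mathfrak{M}_1, w_1 \vDash \varphi$. Since $\mathfrak{M}_1 \vDash \varphi \to \iota$, we get $w_1 \vDash \iota$. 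By \cref{bisim-main}, $Z$ preserves $\iota$, so $w_2 \vDash \iota$. Since $\mathfrak{M}_2 \vDash \iota \to \psi$, we conclude $w_2 \vDash \psi$.

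\emph{\cref{it:lc-2} $\Rightarrow$ \cref{it:lc-3}.} This is a specialization of \cref{it:lc-2}, using two facts:
\begin{itemize}
\item by \cref{can-sim}, $\trianglelefteq_\tau$ is a $\tau$-bisimulation on $\mathfrak{M}^\sigma_\Lambda$ and on $\mathfrak{M}^\pi_\Lambda$;
\item both are $\Lambda$-models, because by \cref{canon-main} (the $\sigma$- and $\pi$-extensions agree with $\DmdOp_\Lambda$ on definable sets, so these are canonical models) every $\varphi \in \Lambda$ is true in them.
\end{itemize}
The frames are monotone by \cref{ce:mon}. Note that we only need them to be $\Lambda$-\emph{models}, not $\Lambda$-frames, so no canonicity of $\Lambda$ is required.

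\emph{\cref{it:lc-3} $\Rightarrow$ \cref{it:lc-1}.} We argue by contraposition using \cref{interp-stone}. If there is no $\tau$-interpolant, there exist $w \in \llbracket\varphi\rrbracket$ and $v \in \llbracket\neg\psi\rrbracket$ with $w \trianglelefteq_\tau v$. By \cref{canon-main}, in either canonical model $w \vDash \varphi$ and $v \nvDash \psi$. Hence $\varphi$ does not entail $\psi$ under $\trianglelefteq_\tau$.

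No real obstacle is expected. The only points needing care are that the canonical models are $\Lambda$-models via the truth lemma and that all frames involved are monotone, so that \cref{bisim-main} and \cref{can-sim} apply.
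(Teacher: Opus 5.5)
Your proof is correct and is the paper's argument: the same cycle (i)$\Rightarrow$(ii)$\Rightarrow$(iii)$\Rightarrow$(i) via \cref{bisim-main}, \cref{can-sim} (with the truth lemma \cref{canon-main} showing the canonical models are $\Lambda$-models), and \cref{interp-stone}, with the details the paper leaves implicit written out. One small correction: monotonicity of the frames in (ii) does not follow from $\Lambda$ being monotone, since a model validating $\Lambda$ under its own valuation need not lie on a monotone frame; it is instead built into the definition of a $\tau$-bisimulation, which only exists between models on monotone frames, so your argument goes through unchanged.
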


    \begin{proof}
      \cref{it:lc-1} $\Rightarrow$ \cref{it:lc-2} follows from \cref{bisim-main}.

      \cref{it:lc-2} $\Rightarrow$ \cref{it:lc-3} follows from \cref{can-sim}.

      \cref{it:lc-3} $\Rightarrow$ \cref{it:lc-1} follows from \cref{interp-stone}.
    \end{proof}
  \subsection{Preservation theorems}
    In the classical predicate calculus, LPP is closely connected with the following property, known
    as Lyndon's preservation theorem~\cite[Corollary 5.3]{Lynd59Hom}: a formula is positive iff it
    is preserved under surjective homomorphisms. So, in CPC, the following conditions on a formula
    $\varphi$ are equivalent:
    \begin{itemize}
      \item $\varphi$ is equivalent to a positive formula;
      \item $\varphi$ is monotone;
      \item $\varphi$ is preserved under surjective homomorphisms.
    \end{itemize}
    In modal logics, as we will show later, LPP does not hold in general. So, we need two different
    model-theoretic characterizations: one for positive formulas and another one for monotone. The
    following theorem is a generalization of~\cite[Corollary 3.9]{KurtDRij97} to the case of
    non-normal logics.
    \begin{theorem}
      \label{pos-char} Let $\Lambda$ be a monotone logic, $\vec p$ be a tuple of variables. The
      following conditions on a formula $\varphi$ are equivalent:
      \begin{enumerate}
        \item $\varphi$ is $\Lambda$-equivalent to a $\vec p$-positive formula;
        \item $\varphi$ is preserved under all $\vec p$-directed bisimulations between
              $\Lambda$-models;
        \item $\varphi$ is preserved under $\trianglelefteq_{\vec p}$ on
              $\mathfrak{M}_\Lambda^\sigma$ (or on $\mathfrak{M}_\Lambda^\pi$).
      \end{enumerate}
    \end{theorem}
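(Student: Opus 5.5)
This is a direct specialization of \cref{lam-char}. Put $\tau \defeq \mathcal{V}^\pm \setminus \neg\vec p$, so that $\vec p$-directed bisimulations are exactly $\tau$-bisimulations. Here $\trianglelefteq_{\vec p}$ in the third condition is read as $\trianglelefteq_\tau$ for this $\tau$. A formula is $\vec p$-positive iff $\mathrm{lits}$ of it is contained in $\tau$. The plan is to apply \cref{lam-char} with $\psi \defeq \varphi$.

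The key observation is that $\varphi$ is $\Lambda$-equivalent to a $\vec p$-positive formula iff there is a $\tau$-interpolant for $\varphi$ and $\varphi$. Indeed, if $\iota \in \mathrm{Fm}_\tau$ satisfies $\varphi \preceq_\Lambda \iota \preceq_\Lambda \varphi$, then $\iota \sim_\Lambda \varphi$ and $\iota$ is $\vec p$-positive. Conversely, a $\vec p$-positive $\alpha \sim_\Lambda \varphi$ is such an interpolant. This is the third item of the remark following the definition of $\tau$-interpolants.

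Next, "$\varphi$ entails $\varphi$ under $Z$" is by definition the same as "$Z$ preserves $\varphi$", as noted right before \cref{lam-char}. So the three items of \cref{lam-char} with $\psi = \varphi$ become, respectively, conditions (i), (ii) and (iii) of the theorem. The equivalences follow immediately:
\begin{itemize}
\item (i)$\Rightarrow$(ii) by \cref{bisim-main};
\item (ii)$\Rightarrow$(iii) because \cref{can-sim} shows that $\trianglelefteq_\tau$ is a $\tau$-bisimulation on the canonical models, and these are $\Lambda$-models by \cref{canon-main};
\item (iii)$\Rightarrow$(i) by \cref{interp-stone}.
\end{itemize}
For the last step, suppose no $\tau$-interpolant exists. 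Then there are $w \in \llbracket\varphi\rrbracket$ and $v \in \llbracket\neg\varphi\rrbracket$ with $w \trianglelefteq_\tau v$. This contradicts preservation of $\varphi$, since $\vartheta_\Lambda(\varphi) = \llbracket\varphi\rrbracket$ by \cref{canon-main}.

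There is no real obstacle. The only points needing care are bookkeeping: matching the notation $\trianglelefteq_{\vec p}$ with $\trianglelefteq_\tau$ for the complementary literal set $\tau$, and checking that the canonical models $\mathfrak{M}^\sigma_\Lambda$ and $\mathfrak{M}^\pi_\Lambda$ qualify as $\Lambda$-models. The latter holds because \cref{canon-main} applies to any canonical model, which makes every theorem of $\Lambda$ true there.
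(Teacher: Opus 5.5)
Your proposal is correct and follows the paper's route: the paper proves \cref{pos-char} in one line by specializing \cref{lam-char} to $\psi = \varphi$ and $\tau = \mathcal{V}^\pm \setminus \neg\vec p$, and you do the same while spelling out the three implications. Your reading of $\trianglelefteq_{\vec p}$ as $\trianglelefteq_\tau$ for this $\tau$ is the right one, since taking $\tau = \vec p$ literally would make (iii) inequivalent to (i), and your appeal to \cref{canon-main} to see that the canonical models are $\Lambda$-models makes explicit a step the paper leaves implicit.
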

    \begin{proof}
      This theorem follows immediately from \cref{lam-char}.
    \end{proof}
    We say that a relation $Z \subseteq W_1 \times W_2$ is \emph{zigzag-free} if it can be presented
    in the form $Z = Z_1 \cup Z_2$, where $Z_1$ and $Z_2^{-1}$ are functions and
    \begin{gather}
      {\rm dom}\,(Z_1) \cap {\rm dom}\,(Z_2) = {\rm rng}\,(Z_1) \cap {\rm rng}\,(Z_2) = \emptyset.
      \label{eq:272}
    \end{gather}
    \vspace{-4ex}
    \begin{lemma}
      \label{l:939} Suppose that $\mathfrak{F}_1 = (W_1, \DmdOp_1)$ and
      $\mathfrak{F}_2 = (W_2, \DmdOp_2)$ are $\Lambda$-frames, $Z$ is a zigzag-free
      $\vec p$-directed bisimulation between $\mathfrak{M}_1 = (\mathfrak{F}_1, \vartheta_1)$ and
      $\mathfrak{M}_2 = (\mathfrak{F}_2, \vartheta_2)$. Then all $\vec p$-monotone in $\Lambda$
      formulas are preserved under $Z$.
    \end{lemma}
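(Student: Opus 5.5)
The idea is to move the valuations on both sides so that $Z$ becomes a bisimulation for \emph{all} literals: raise $\vartheta_1$ and lower $\vartheta_2$ only on the variables $\vec p$. Then preservation of $\varphi$ factors through monotonicity on each side and \cref{bisim-main} in the middle. Zigzag-freeness is exactly what makes these local adjustments consistent.

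Write $Z = Z_1 \cup Z_2$ with $f \defeq Z_1$ a (partial) function from $W_1$ to $W_2$ and $g \defeq Z_2^{-1}$ a (partial) function from $W_2$ to $W_1$, satisfying \cref{eq:272}. Define a valuation $\vartheta_1'$ on $\mathfrak{F}_1$ which agrees with $\vartheta_1$ outside $\vec p$, and for $p_i \in \vec p$ put
\begin{gather*}
\vartheta_1'(p_i) \defeq \vartheta_1(p_i) \cup f^{-1}\vartheta_2(p_i).
\end{gather*}
Define $\vartheta_2'$ on $\mathfrak{F}_2$ which agrees with $\vartheta_2$ outside $\vec p$, and for $p_i \in \vec p$ put
\begin{gather*}
\vartheta_2'(p_i) \defeq \vartheta_2(p_i) \setminus \{v \in {\rm dom}\, g \mid g(v) \notin \vartheta_1(p_i)\}.
\end{gather*}
Clearly $\vartheta_1 \leq_{\vec p} \vartheta_1'$ and $\vartheta_2' \leq_{\vec p} \vartheta_2$.

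Next I check that $Z$ is a $\mathcal{V}^\pm$-bisimulation between $(\mathfrak{F}_1,\vartheta_1')$ and $(\mathfrak{F}_2,\vartheta_2')$. The frame conditions are untouched, and literals outside $\vec p^\pm$ are fine because $Z$ was $\vec p$-directed and the valuations there did not change. For $p \in \vec p$ it suffices to show $w \in \vartheta_1'(p) \Leftrightarrow v \in \vartheta_2'(p)$ for every $(w,v) \in Z$. There are two cases.
\begin{itemize}
\item If $v = f(w)$, then by \cref{eq:272} we have $w \notin {\rm dom}\, Z_2$ and $v \notin {\rm dom}\, g$. Hence $v \in \vartheta_2'(p)$ iff $v \in \vartheta_2(p)$. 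Also $w \in \vartheta_1'(p)$ iff $w \in \vartheta_1(p)$ or $f(w) \in \vartheta_2(p)$, and the first disjunct implies the second by $({\rm lit})$ for $p$.
\item If $w = g(v)$, then $w \notin {\rm dom}\, f$, so $w \in \vartheta_1'(p)$ iff $w \in \vartheta_1(p)$. Moreover $v \in \vartheta_2'(p)$ iff $v \in \vartheta_2(p)$ and $g(v) \in \vartheta_1(p)$, which again reduces to $w \in \vartheta_1(p)$ by $({\rm lit})$.
\end{itemize}

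Finally, take a $\vec p$-monotone in $\Lambda$ formula $\varphi$ and $(w,v) \in Z$ with $\mathfrak{M}_1, w \vDash \varphi$. Since $\mathfrak{F}_k \vDash \Lambda$, we have $\Lambda \subseteq \mathrm{Log}\,\mathfrak{F}_k^*$. So $\varphi(\vec p,\bar r) \to \varphi(\vec p \vee \vec q,\bar r)$ is valid in $\mathfrak{F}_k^*$, and by \cref{mon-equiv} (applied with $\mathfrak{C} = \{\mathfrak{F}_k^*\}$) $\varphi$ is $\vec p$-monotone in $\mathfrak{F}_k^*$. The chain of implications is then:
\begin{enumerate}
\item $\vartheta_1 \leq_{\vec p} \vartheta_1'$ gives $w \in \vartheta_1'(\varphi)$;
\item \cref{bisim-main} gives $v \in \vartheta_2'(\varphi)$;
\item $\vartheta_2' \leq_{\vec p} \vartheta_2$ gives $v \in \vartheta_2(\varphi)$.
\end{enumerate}
The only delicate step is the case analysis above, where both halves of \cref{eq:272} are needed. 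Disjointness of domains ensures that raising $p$ at $w$ is forced by a single successor. Disjointness of ranges ensures that lowering $p$ at $v$ is forced by a single predecessor. Without these, a world with both a $p$- and a $\neg p$-successor could not be repaired.
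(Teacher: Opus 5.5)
Your proof is correct and follows the paper's argument. Your adjusted valuations coincide on $\vec p$, by $({\rm lit})$, with the paper's $Z_1^{-1}\vartheta_2(q) \cup (\vartheta_1(q) \setminus {\rm dom}\,Z_1)$ and $Z_2\vartheta_1(q) \cup (\vartheta_2(q) \setminus {\rm rng}\,Z_2)$. The bisimulation check uses the same two-case split over $Z_1$ and $Z_2$, and the conclusion is the same monotone--bisimulation--monotone chain. The only cosmetic difference is that the paper applies these formulas to every variable and then uses $({\rm lit})$ for $r$ and $\neg r$ to show nothing changes outside $\vec p$, whereas you leave those variables untouched from the start.
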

    \begin{proof}
      Let $Z_1$ and $Z_2^{-1}$ be functions such that $Z = Z_1 \cup Z_2$ and \cref{eq:272} holds.
      Consider the following valuations:
      \begin{itemize}
        \item $\vartheta'_1$ on $\mathfrak{F}_1$ such that $\vartheta'_1(q) \defeq
              Z_1^{-1}\vartheta_2(q) \cup \vartheta_1(q) \setminus {\rm dom}\, Z_1$ for all
              $q \in \mathcal{V}$;
        \item $\vartheta'_2$ on $\mathfrak{F}_2$ such that $\vartheta'_2(q) \defeq Z_2
              \vartheta_1(q) \cup \vartheta_2(q) \setminus {\rm rng}\, Z_2$ for all
              $q \in \mathcal{V}$.
      \end{itemize}
      Let us check that $\vartheta_1 \leq_{\vec p} \vartheta_1'$ and
      $\vartheta_2' \leq_{\vec p} \vartheta_2$. For all $q \in \mathcal{V}$,
      \begin{itemize}
        \item $\vartheta_1(q) \cap {\rm dom}\, Z_1 \subseteq Z_1^{-1}\vartheta_2(q)$, whence
              $\vartheta_1(q) \subseteq \vartheta_1'(q)$;
        \item $Z_2 \vartheta_1(q) \subseteq \vartheta_2(q)$, whence
              $\vartheta_2'(q) \subseteq \vartheta_2(q)$.
      \end{itemize}
      For $r \in \mathcal{V} \setminus \vec p$,
      \begin{itemize}
        \item $\vartheta_1(r) \supseteq Z_1^{-1}\vartheta_2(r)$, whence
              $\vartheta_1(r) \supseteq \vartheta_1'(r)$;
        \item $Z_2 \vartheta_1(r) \supseteq \vartheta_2(r) \cap {\rm rng}\, Z_2$, whence
              $\vartheta_2'(r) \supseteq \vartheta_2(r)$.
      \end{itemize}
      Moreover, $Z$ is a bisimulation between $(\mathfrak{F}_1, \vartheta_1')$ and
      $(\mathfrak{F}_2, \vartheta_2')$. Indeed, for $q \in \mathcal{V}$, $(w_1, w_2) \in Z_1$,
      \begin{gather*}
        w_1 \in \vartheta'_1(q) \Leftrightarrow w_1 \in Z_1^{-1}\vartheta_2(q) \Leftrightarrow w_2
        \in \vartheta_2(q) \Leftrightarrow w_2 \in \vartheta'_2(q),
      \end{gather*}
      and, for $(w_1, w_2) \in Z_2$,
      \begin{gather*}
        w_1 \in \vartheta'_1(q) \Leftrightarrow w_1 \in \vartheta_1(q) \Leftrightarrow w_2 \in Z_2
        \vartheta_1(q) \Leftrightarrow w_2 \in \vartheta'_2(q).
      \end{gather*}
      Now, one can easily finish the proof: if $\varphi$ is $\vec p$-monotone in $\Lambda$, then,
      since $\mathfrak{F}_1$ and $\mathfrak{F}_2$ are $\Lambda$-frames,
      $\vartheta_1(\varphi) \subseteq \vartheta'_1(\varphi)$ and
      $\vartheta'_2(\varphi) \subseteq \vartheta_2(\varphi)$. Since $Z$ is a bisimulation,
      $Z(\vartheta_1'(\varphi)) \subseteq \vartheta_2'(\varphi)$. Thus,
      $Z(\vartheta_1(\varphi)) \subseteq \vartheta_2(\varphi)$.
    \end{proof}
    \begin{proposition}
      \label{mon-char} Suppose that $\Lambda = \mathrm{Log}\, \mathfrak{C}$, where $\mathfrak{C}$ is
      a class of neighborhood frames. Then the following conditions on a formula $\varphi$ are
      equivalent:
      \begin{enumerate}
        \item $\varphi$ is $\vec p$-monotone in $\Lambda$; \label{it:mc-1}
        \item $\varphi$ is preserved under zigzag-free $\vec p$-directed bisimulations between the
              models based on $\Lambda$-frames; \label{it:mc-2}
        \item $\varphi$ is preserved under $\vec p$-directed morphisms between the models based on
              the frames from $\mathfrak{C}$; \label{it:mc-3}
        \item $\varphi$ is $\vec p$-monotone in all frames from $\mathfrak{C}$. \label{it:mc-4}
      \end{enumerate}
    \end{proposition}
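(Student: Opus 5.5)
I will prove the cycle \cref{it:mc-1} $\Rightarrow$ \cref{it:mc-2} $\Rightarrow$ \cref{it:mc-3} $\Rightarrow$ \cref{it:mc-4} $\Rightarrow$ \cref{it:mc-1}.

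The first implication, \cref{it:mc-1} $\Rightarrow$ \cref{it:mc-2}, is exactly \cref{l:939}, since models based on $\Lambda$-frames are what that lemma speaks about.

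For \cref{it:mc-2} $\Rightarrow$ \cref{it:mc-3}, take a $\vec p$-directed morphism $f$ between models on frames from $\mathfrak{C}$. Frames in $\mathfrak{C}$ validate $\Lambda = \mathrm{Log}\,\mathfrak{C}$, so they are $\Lambda$-frames. Moreover $f$ is zigzag-free: put $Z_1 \defeq f$ and $Z_2 \defeq \emptyset$. Then $Z_1$ is a function, $Z_2^{-1} = \emptyset$ is (vacuously) a function, and \cref{eq:272} holds trivially. By the lemma identifying morphisms with functional bisimulations, $f$ is a $\vec p$-directed bisimulation, so \cref{it:mc-2} applies.

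The main step is \cref{it:mc-3} $\Rightarrow$ \cref{it:mc-4}. Fix $\mathfrak{F} = (W, \DmdOp) \in \mathfrak{C}$ and valuations $\vartheta_1 \leq_{\vec p} \vartheta_2$ on $\mathfrak{F}$. I want a single $\vec p$-directed morphism realizing the inclusion $\vartheta_1(\varphi) \subseteq \vartheta_2(\varphi)$. The identity map $1_W$ is a frame morphism from $\mathfrak{F}$ to itself. Regarded as a relation between $(\mathfrak{F}, \vartheta_1)$ and $(\mathfrak{F}, \vartheta_2)$, it satisfies $({\rm lit})$ for $\tau = \mathcal{V}^\pm \setminus \neg\vec p$. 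Indeed, for a variable $q$ we have $\vartheta_1(q) \subseteq \vartheta_2(q)$, since $\vartheta_1 \leq_{\vec p} \vartheta_2$ means increase on $\vec p$ and equality elsewhere. For a negative literal $\neg r$ with $r \notin \vec p$, we have $\vartheta_1(r) = \vartheta_2(r)$, so $\vartheta_1(\neg r) = \vartheta_2(\neg r)$. Hence $1_W$ is a $\vec p$-directed morphism, and preservation of $\varphi$ under it is exactly $\vartheta_1(\varphi) \subseteq \vartheta_2(\varphi)$. Here the only points to check are that the models share the frame and that $\mathfrak{C}$ is used as given, which keeps this step from being an obstacle after all.

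Finally, \cref{it:mc-4} $\Rightarrow$ \cref{it:mc-1} is \cref{mon-equiv}, \cref{it:me-1} $\Rightarrow$ \cref{it:me-3}. To apply it, take the class of algebras $\{\mathfrak{F}^* \mid \mathfrak{F} \in \mathfrak{C}\}$, whose logic is $\Lambda$. Monotonicity in $\mathfrak{F}$ is by definition monotonicity in $\mathfrak{F}^*$.
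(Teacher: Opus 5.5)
Your proof is correct and follows the paper's argument exactly. It uses the same cycle (i)$\Rightarrow$(ii)$\Rightarrow$(iii)$\Rightarrow$(iv)$\Rightarrow$(i) with the same ingredients: \cref{l:939}, the fact that a morphism is zigzag-free, the identity map $1_W$ as a $\vec p$-directed morphism, and \cref{mon-equiv}. You additionally spell out two points the paper leaves to the reader: the decomposition $Z_1 = f$, $Z_2 = \emptyset$ behind the paper's ``trivial'' step, and the $({\rm lit})$ check for $1_W$.
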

    \begin{proof}
      \cref{it:mc-1} $\Rightarrow$ \cref{it:mc-2} follows from \cref{l:939}. \cref{it:mc-2}
      $\Rightarrow$ \cref{it:mc-3} is trivial.

      \cref{it:mc-3} $\Rightarrow$ \cref{it:mc-4}. Suppose that \cref{it:mc-3} holds. Let
      $\vartheta_1$ and $\vartheta_2$ be valuations on $\mathfrak{F} = (W, \DmdOp) \in \mathfrak{C}$
      such that $\vartheta_1 \leq_{\vec p} \vartheta_2$. Then $1_W$ is a $\vec p$-directed morphism
      from $(\mathfrak{F}, \vartheta_1)$ to $(\mathfrak{F}, \vartheta_2)$. Therefore,
      $\vartheta_1(\varphi) = 1_W(\vartheta_1(\varphi)) \subseteq \vartheta_2(\varphi)$.

      \cref{it:mc-4} $\Rightarrow$ \cref{it:mc-1} follows from \cref{mon-equiv}.
    \end{proof}
    Notice that \cref{mon-char} gives a characterization of monotone formulas only for
    neighborhood-complete logics. In fact, one can obtain a similar result for arbitrary logics and
    general neighborhood frames.
 \section{Monotonicity vs positivity in normal logics}
  \subsection{Lyndon's positivity theorem}
    For a formula $\varphi$ and $n \in \omega$, we denote
    \begin{gather*}
      \DmdOp^{\leq n}\varphi \defeq \bigvee_{k \leq n}\DmdOp^k \varphi,\quad \BoxOp^{\leq n}\varphi
      \defeq \bigwedge_{k \leq n}\BoxOp^k\varphi.
    \end{gather*}
    As usual, the modal depth $d(\varphi)$ of a formula $\varphi$ is defined inductively:
    \begin{align*}
      &d(\alpha) \defeq 0, &&\alpha \in \mathcal{V}^{\pm} \cup \{\bot, \top\},\\
      &d(\varphi \circ \psi) \defeq \mathrm{max}\{ d(\varphi), d(\psi) \}, &&\varphi, \psi \in
      \mathrm{Fm},\; {\circ} \in \{\vee, \wedge\},\\
      &d(\#\varphi) \defeq d(\varphi) + 1, &&\varphi \in \mathrm{Fm}, \# \in \{\DmdOp, \BoxOp\}.
    \end{align*}
    \vspace{-4ex}
    \begin{lemma}
      \label{l:equiv} Let $\vec p = (p_i)_{i < n}$, $\varphi(\vec p)$ be a formula of modal depth
      $d$, $\vec \eta = (\eta_i)_{i < n}$ be a tuple of formulas. Then
      \begin{gather*}
        \mathrm{K} \vdash \BoxOp^{\leq d}\bigwedge_{i<n}(p_i \leftrightarrow \eta_i) \to
        \bigl(\varphi(\vec p) \leftrightarrow \varphi(\vec \eta)\bigr).
      \end{gather*}
    \end{lemma}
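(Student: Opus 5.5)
We argue by induction on the construction of $\varphi$ in negation normal form, following the inductive definition of modal depth; write $\theta \defeq \bigwedge_{i<n}(p_i \leftrightarrow \eta_i)$. The claim to be proved at every stage is $\mathrm{K} \vdash \BoxOp^{\leq d}\theta \to (\varphi(\vec p) \leftrightarrow \varphi(\vec \eta))$ with $d = d(\varphi)$. Since $\BoxOp^{\leq d}\theta \to \BoxOp^{\leq d'}\theta$ is a propositional tautology for $d' \leq d$ (the conjunction simply gets shorter), a statement proved with the depth of a subformula may be weakened to any larger depth. We use this freely when passing to binary connectives.

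\emph{Base case.} Here $d = 0$ and $\BoxOp^{\leq 0}\theta = \theta$. If $\varphi$ is $\bot$, $\top$, or a literal over a variable outside $\vec p$, then $\varphi(\vec \eta) = \varphi$ and the claim is trivial. If $\varphi = p_i$, then $\theta \to (p_i \leftrightarrow \eta_i)$ is a tautology. If $\varphi = \neg p_i$, then $\varphi(\vec\eta) = \neg\eta_i$, the negation obtained by expanding via duality laws. This is classically equivalent to the negation of $\eta_i$, so $\theta \to (\neg p_i \leftrightarrow \neg\eta_i)$ is again a tautology.

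\emph{Binary step.} Take $\varphi = \varphi_1 \circ \varphi_2$ with $\circ \in \{\wedge, \vee\}$. Weaken both induction hypotheses to depth $d = \max\{d(\varphi_1), d(\varphi_2)\}$ and combine them propositionally.

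\emph{Modal step.} Take $\varphi = \#\psi$ with $\# \in \{\DmdOp, \BoxOp\}$ and $d(\psi) = d - 1$. The induction hypothesis gives $\mathrm{K} \vdash \BoxOp^{\leq d-1}\theta \to (\psi \leftrightarrow \psi(\vec\eta))$. Apply necessitation and the K-axiom (available since $\mathrm{K}$ is normal; in the paper's $\DmdOp$-based presentation these are the standard consequences of AM, AC, AN) to obtain
\begin{gather*}
\BoxOp\BoxOp^{\leq d-1}\theta \to \BoxOp(\psi \leftrightarrow \psi(\vec\eta)).
\end{gather*}
Two further facts finish the step. First, $\BoxOp$ distributes over finite conjunctions in $\mathrm{K}$, so $\BoxOp\BoxOp^{\leq d-1}\theta$ is equivalent to $\bigwedge_{1 \leq k \leq d}\BoxOp^k\theta$, which is implied by $\BoxOp^{\leq d}\theta$. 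Second, $\mathrm{K} \vdash \BoxOp(\alpha \leftrightarrow \beta) \to (\#\alpha \leftrightarrow \#\beta)$ for both $\# = \BoxOp$ and $\# = \DmdOp$, by normality and the duality $\DmdOp = \neg\BoxOp\neg$. Chaining these gives the claim for $\#\psi$.

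\emph{Main obstacle.} The only delicate point is bookkeeping in the paper's formalism. Negation is defined rather than primitive, and substitution expands negations through duality. We must check that $\varphi(\vec \eta)$ still decomposes as $\varphi_1(\vec\eta) \circ \varphi_2(\vec\eta)$, respectively $\#(\psi(\vec\eta))$, so that the induction goes through syntactically. This holds by the definition of substitution, since only the literals of $\varphi$ are replaced. Apart from this, every step is a routine derivation in $\mathrm{K}$.
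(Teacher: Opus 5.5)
Your proof is correct and follows the paper's argument: induction on $\varphi$, with the modal case handled by applying normality to the induction hypothesis and then using $\mathrm{K} \vdash \BoxOp(\alpha \leftrightarrow \beta) \to (\#\alpha \leftrightarrow \#\beta)$. You also write out the base case, the binary case and the weakening from depth $d'$ to depth $d \geq d'$, which the paper leaves implicit.
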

    \begin{proof}
      We proceed by induction on construction of $\varphi$. Consider the case $\varphi = \#\psi$,
      where $\# \in \{\DmdOp, \BoxOp\}$. By the induction hypothesis
      \begin{gather*}
        \mathrm{K} \vdash \BoxOp^{\leq d-1}\bigwedge_{i<n}(p_i \leftrightarrow \eta_i) \to
        \bigl(\psi(\vec p) \leftrightarrow \psi(\vec \eta)\bigr).
      \end{gather*}
      By normalitsy,
      \begin{gather*}
        \mathrm{K} \vdash \BoxOp^{\leq d}\bigwedge_{i<n}(p_i \leftrightarrow \eta_i) \to
        \BoxOp\bigl(\psi(\vec p) \leftrightarrow \psi(\vec \eta)\bigr).
      \end{gather*}
      It remains to notice that $\mathrm{K} \vdash \BoxOp\bigl(\psi(\vec p) \leftrightarrow
      \psi(\vec \eta)\bigr) \to \bigl(\#\psi(\vec p) \leftrightarrow \#\psi(\vec \eta)\bigr)$.
    \end{proof}
    \begin{proposition}
      Let $\Lambda$ be a normal logic with ${\rm LPP}(n+2m,0)$. Then $\Lambda$ has ${\rm LPP}(n,m)$.
    \end{proposition}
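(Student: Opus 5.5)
The plan is to encode each parameter $r_j$ by two fresh variables $s_j$ and $t_j$, which stand for $r_j$ and $\neg r_j$ respectively. I will build a formula $\psi(\vec p,\bar s,\bar t)$ in $n+2m$ variables that is monotone in all of them. Applying ${\rm LPP}(n+2m,0)$ to it and substituting back $\bar s\mapsto\bar r$, $\bar t\mapsto\neg\bar r$ should then give the required $\vec p$-positive equivalent of $\varphi$.

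\emph{Construction.} Let $\varphi(\vec p,\bar r)$ be $\vec p$-monotone in $\Lambda$, and let $d \defeq d(\varphi)$. Put
\begin{gather*}
  \delta \defeq \DmdOp^{\leq d}\bigvee_{j<m}(s_j\wedge t_j),\quad
  \varepsilon \defeq \BoxOp^{\leq d}\bigwedge_{j<m}(s_j\vee t_j),\quad
  \psi(\vec p,\bar s,\bar t) \defeq \delta\vee\bigl(\varepsilon\wedge\varphi(\vec p,\bar s)\bigr).
\end{gather*}
Both $\delta$ and $\varepsilon$ are positive in $\bar s,\bar t$ and do not contain $\vec p$. Note that $\neg\delta\wedge\varepsilon$ is $\mathrm{K}$-equivalent to $\BoxOp^{\leq d}\bigwedge_j(s_j\leftrightarrow\neg t_j)$.

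\emph{Monotonicity of $\psi$.} I check the three groups of variables separately; by the proof of \cref{lpp-mono} style reasoning (or directly), monotonicity in each group suffices.
\begin{enumerate}
  \item In $\vec p$: $\delta$ and $\varepsilon$ do not contain $\vec p$, and $\varphi$ is $\vec p$-monotone.
  \item In $\bar s$: I must show $\Lambda\vdash\psi(\vec p,\bar s,\bar t)\to\psi(\vec p,\bar s\vee\bar s',\bar t)$.
  \begin{itemize}
    \item If $\delta$ holds, then $\delta(\bar s\vee\bar s',\bar t)$ holds by positivity.
    \item Otherwise assume $\varepsilon\wedge\varphi(\vec p,\bar s)$, with $\neg\delta(\bar s\vee\bar s',\bar t)$ (else we are done). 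These give $\varepsilon(\bar s\vee\bar s',\bar t)$, and in addition $\BoxOp^{\leq d}\bigwedge_j(s_j\leftrightarrow\neg t_j)$ and $\BoxOp^{\leq d}\bigwedge_j(s_j\vee s'_j\leftrightarrow\neg t_j)$. Together these yield $\BoxOp^{\leq d}\bigwedge_j(s_j\leftrightarrow s_j\vee s'_j)$ in $\mathrm{K}$.
    \item By \cref{l:equiv}, $\varphi(\vec p,\bar s)\leftrightarrow\varphi(\vec p,\bar s\vee\bar s')$, so the second disjunct of $\psi(\vec p,\bar s\vee\bar s',\bar t)$ holds.
  \end{itemize}
  \item In $\bar t$: symmetric to $\bar s$, with $\varphi$ unchanged since $\bar t$ does not occur in it. In the last step only $\varepsilon$ and $\delta$ move, and the same local-complementarity argument applies.
\end{enumerate}
All these derivations live in $\mathrm{K}\subseteq\Lambda$ plus the assumed $\vec p$-monotonicity of $\varphi$. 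Hence $\psi$ is monotone in $\Lambda$ in the sense of ${\rm LPP}(n+2m,0)$.

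\emph{Conclusion.} By ${\rm LPP}(n+2m,0)$, $\psi\sim_\Lambda\beta(\vec p,\bar s,\bar t)$ for some positive $\beta$. Substitute $r_j$ for $s_j$ and $\neg r_j$ for $t_j$; this is allowed since $\Lambda$ is closed under substitution. Then:
\begin{itemize}
  \item $\delta$ becomes $\mathrm{K}$-equivalent to $\bot$;
  \item $\varepsilon$ becomes $\mathrm{K}$-equivalent to $\top$;
  \item so $\psi(\vec p,\bar r,\neg\bar r)\sim_\Lambda\varphi(\vec p,\bar r)$.
\end{itemize}
Hence $\varphi\sim_\Lambda\beta(\vec p,\bar r,\neg\bar r)$. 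This formula is $\vec p$-positive, because $\beta$ is positive and only $\bar r$ acquired negations.

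The main obstacle is the monotonicity check in $\bar s$ and $\bar t$. This is exactly where normality is used, through \cref{l:equiv} and the box-distribution that turns $\neg\delta\wedge\varepsilon$ into local complementarity up to depth $d$. The key design choice there is to guard $\varphi$ by both $\delta$ and $\varepsilon$.
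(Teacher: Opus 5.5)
Your proof is correct, and its skeleton matches the paper's. The shared steps are two fresh variables per parameter, the guards $\delta$ and $\varepsilon$ (the paper's $\bigvee_j\DmdOp^{\leq d}(r_j\wedge r_j')$ and $\bigwedge_j\BoxOp^{\leq d}(r_j\vee r_j')$ are $\mathrm{K}$-equivalent to yours), an application of ${\rm LPP}(n+2m,0)$, and the back-substitution $\bar r,\neg\bar r$. The difference is what gets guarded, and hence where \cref{l:equiv} is used. The paper first applies \cref{l:nnf} to write $\varphi=\varphi'(\vec p,\bar r,\neg\bar r)$ with $\varphi'$ positive in $\bar r,\bar r'$, and guards $\varphi'(\vec p,\bar r,\bar r')$. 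Then $\psi$ is syntactically positive in the new variables, so monotonicity there is free by \cref{pos-are-mon}, and \cref{l:equiv} is needed only for $\vec p$-monotonicity. You guard $\varphi$ itself, so $\vec p$- and $\bar t$-monotonicity are immediate, and \cref{l:equiv} is spent on the $\bar s$ case, where your local-agreement argument is sound. The $\bar t$ case is even syntactic, so the ``local-complementarity'' remark there is unnecessary. The two choices give $\mathrm{K}$-equivalent formulas: under $\varepsilon\wedge\neg\delta$, \cref{l:equiv} replaces $\varphi'(\vec p,\bar r,\bar r')$ by $\varphi(\vec p,\bar r)$, so the paper's $\psi$ is $\mathrm{K}$-equivalent to $\delta\vee(\varepsilon\wedge\varphi(\vec p,\bar r))$, which is yours after renaming. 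In fact the last step of the paper's displayed chain, $\psi\sim_\mathrm{K}\varphi'(\vec p,\bar r,\neg\bar r)\vee\bigvee_j\DmdOp^{\leq d}(r_j\wedge r_j')$, drops the conjunct $\varepsilon$ and is false as stated. For example, with $\varphi=\neg r$ and $d=0$, the paper's $\psi$ is equivalent to $r'$, not to $\neg r\vee r'$. The slip is harmless, because the correct form (your $\psi$) still yields $\vec p$-monotonicity exactly as in your argument. So your route avoids \cref{l:nnf} and makes the $\vec p$-step transparent, at the cost of an explicit derivation for $\bar s$; the paper's route gets positivity in the new variables for free. One small correction: joint monotonicity follows simply by chaining substitution instances of the three one-group implications. \cref{lpp-mono} is not the relevant reference, since it combines positive forms via LIP.
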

    \begin{proof}
      Let $\vec p = (p_i)_{i<n}$, $\bar r = (r_j)_{j<m}$, $\varphi(\vec p, \bar r)$ be a
      $\vec p$-monotone in $\Lambda$ formula. By \cref{l:nnf}, there is a formula
      $\varphi'(\vec p, \bar r, \bar r')$ such that $\varphi'$ is $(\bar r \cup \bar r')$-positive
      and $\varphi(\vec p, \bar r) = \varphi'(\vec p, \bar r, \neg\bar r)$. We denote by $d$ the
      modal depth of $\varphi$. Consider the formula
      \begin{gather*}
        \psi(\vec p, \bar r, \bar r') \defeq \varphi'(\vec p, \bar r, \bar r') \wedge
        \bigwedge_{j<m}\BoxOp^{\leq d}(r_j \vee r_j')
        \vee \bigvee_{j<m}\DmdOp^{\leq d}(r_j \wedge r_j').
      \end{gather*}
      Notice that
      \begin{align*}
        \psi(\vec p, \bar r, \bar r')
        &\sim_\mathrm{E} \varphi' \wedge
        \bigwedge_{j<m}\BoxOp^{\leq d}(r_j \vee r_j') \wedge \bigwedge_{j<m}\neg\DmdOp^{\leq d}(r_j
        \wedge r_j')
        \vee \bigvee_{j<m}\DmdOp^{\leq d}(r_j \wedge r_j')\\
        &\sim_\mathrm{K} \varphi' \wedge \bigwedge_{j<m}\BoxOp^{\leq d}(r_j' \leftrightarrow
        \neg r_j)
        \vee \bigvee_{j<m}\DmdOp^{\leq d}(r_j \wedge r_j')\\
        &\sim_\mathrm{K} \varphi'(\vec p, \bar r, \neg\bar r) \vee \bigvee_{j<m}\DmdOp^{\leq d}(r_j
        \wedge r_j')
      \end{align*}
      by \cref{l:equiv}. Since $\varphi'(\vec p, \bar r, \neg\bar r) = \varphi(\vec p, \bar r)$ is
      $\vec p$-monotone in $\Lambda$, $\psi(\vec p, \bar r, \bar r')$ is also $\vec p$-monotone in
      $\Lambda$. Moreover, $\psi(\vec p, \bar r, \bar r')$ is $(\bar r \cup \bar r')$-positive.
      Therefore, $\psi$ is monotone in $\Lambda$ and, by ${\rm LPP}(n+2m,0)$, there is a positive
      formula $\eta(\vec p, \bar r, \bar r')$ such that
      $\eta(\vec p, \bar r, \bar r') \sim_\Lambda \psi(\vec p, \bar r, \bar r')$. Notice that
      \begin{gather*}
        \psi(\vec p, \bar r, \neg\bar r) \sim_\mathrm{K} \varphi'(\vec p, \bar r, \neg\bar r) =
        \varphi(\vec p, \bar r).
      \end{gather*}
      Thus, $\eta(\vec p, \bar r, \neg\bar r)$ is a $\vec p$-positive formula which is
      $\Lambda$-equivalent to $\varphi(\vec p, \bar r)$.
    \end{proof}
    \begin{corollary}
      \label{par-norm} If $\Lambda$ is a normal logic with ${\rm LPP}(\omega,0)$, then it has LPP.
    \end{corollary}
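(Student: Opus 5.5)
The corollary follows by combining the preceding proposition with the definitions of the graded properties; nothing new is needed.

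Let $\Lambda$ be normal with ${\rm LPP}(\omega,0)$. The full LPP is ${\rm LPP}(\omega,\omega)$. By definition this asks that for all finite $n', m'$, every $\vec p$-monotone formula $\varphi(\vec p,\bar r)$ with $|\vec p| = n'$ and $|\bar r| = m'$ be $\Lambda$-equivalent to a $\vec p$-positive one. So it suffices to show that $\Lambda$ has ${\rm LPP}(n,m)$ for every pair of finite $n, m$.

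Fix finite $n, m$. Since $n+2m \le \omega$, the property ${\rm LPP}(\omega,0)$ implies ${\rm LPP}(n+2m,0)$. This is immediate from the definition, because the quantification in ${\rm LPP}(k,0)$ ranges over $n' \le k$, and these ranges grow with $k$. Applying the preceding proposition (for normal logics, ${\rm LPP}(n+2m,0)\Rightarrow{\rm LPP}(n,m)$), we obtain ${\rm LPP}(n,m)$.

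It remains to pass from all finite pairs to ${\rm LPP}(\omega,\omega)$. Take any $\vec p$-monotone formula $\varphi(\vec p,\bar r)$ with finite tuples of lengths $n'$ and $m'$. It falls under ${\rm LPP}(n',m')$, which we have just established, so $\varphi$ is $\Lambda$-equivalent to a $\vec p$-positive formula.

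There is no real obstacle here. The only point to check is that the definition of ${\rm LPP}(n,m)$ for $n = \omega$ or $m = \omega$ quantifies only over finite tuples, so the full property really is the union of the finite cases. It does, by the explicit clause "for all finite $n' \le n$, $m' \le m$".
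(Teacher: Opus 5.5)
Your argument is correct and is essentially how the paper gets the corollary. The paper gives no separate proof; the corollary is stated as an immediate consequence of the preceding proposition, via ${\rm LPP}(\omega,0)\Rightarrow{\rm LPP}(n+2m,0)\Rightarrow{\rm LPP}(n,m)$ for every finite $n,m$, with the full ${\rm LPP}={\rm LPP}(\omega,\omega)$ being exactly the union of these finite cases.
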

    \begin{theorem}
      \label{lynd} Let $\Lambda$ be a normal modal logic with LIP. Then $\Lambda$ has LPP.
    \end{theorem}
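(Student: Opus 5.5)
By \cref{par-norm}, it suffices to establish ${\rm LPP}(\omega,0)$. Alternatively, by the corollary to \cref{lpp-mono}, it suffices to establish ${\rm LPP}(1,\omega)$, since $\Lambda$ has LIP. I will take the second route and follow Lyndon's argument. Fix a variable $p$, a tuple $\bar r$, and a formula $\varphi(p,\bar r)$ that is $p$-monotone in $\Lambda$. Let $d$ be its modal depth, and choose a fresh variable $q$.

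The plan is to convert monotonicity into an implication whose Lyndon interpolant must be $p$-positive. Monotonicity gives $\Lambda \vdash \varphi(p,\bar r) \to \varphi(p\vee q,\bar r)$, but I need an implication in which $\neg p$ cannot occur on one side. For this I use the normal "global equivalence" trick of \cref{l:equiv}:
\begin{gather*}
  \varphi(p,\bar r) \wedge \BoxOp^{\leq d}(p \to q) \preceq_\Lambda \varphi(q,\bar r).
\end{gather*}
To see this, note that in $\mathrm{K}$ the formula $\BoxOp^{\leq d}(p\to q)$ is equivalent to $\BoxOp^{\leq d}\bigl(q \leftrightarrow (p\vee q)\bigr)$. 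Hence, by \cref{l:equiv}, it entails $\varphi(q,\bar r)\leftrightarrow\varphi(p\vee q,\bar r)$, and monotonicity supplies $\varphi(p,\bar r)\to\varphi(p\vee q,\bar r)$.

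Next, I apply LIP to this implication to obtain an interpolant $\iota$ with
\[
\mathrm{lits}(\iota)\subseteq \mathrm{lits}\bigl(\varphi(p,\bar r)\wedge\BoxOp^{\leq d}(\neg p\vee q)\bigr)\cap\mathrm{lits}(\varphi(q,\bar r)).
\]
The literal $\neg p$ does not occur on the right-hand side, because the right-hand side contains no $p$ at all. Likewise $\neg q$ does not occur on the left-hand side, since there $q$ appears only positively. The literal $p$ may occur on the left, but it is absent on the right. Consequently $\iota$ contains neither $p$ nor $\neg p$, and its only literal built from $p$ or $q$ is $q$ itself, occurring positively; all its other literals involve only $\bar r$. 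So $\iota=\iota(q,\bar r)$ is $q$-positive.

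Finally, I recover a formula equivalent to $\varphi$ by substituting $p$ for $q$. Substituting $q:=p$ in $\varphi(p,\bar r)\wedge\BoxOp^{\leq d}(p\to q)\preceq_\Lambda\iota(q,\bar r)$ turns the box conjunct into a theorem, giving $\varphi\preceq_\Lambda\iota(p,\bar r)$. The same substitution in $\iota(q,\bar r)\preceq_\Lambda\varphi(q,\bar r)$ gives $\iota(p,\bar r)\preceq_\Lambda\varphi(p,\bar r)$. Both steps are valid because $\Lambda$ is closed under substitution. Hence $\varphi\sim_\Lambda\iota(p,\bar r)$, and $\iota(p,\bar r)$ is $p$-positive, which establishes ${\rm LPP}(1,\omega)$ and therefore LPP.

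The main point requiring care is the first step, namely that the guard $\BoxOp^{\leq d}(p\to q)$ suffices. The entire argument depends on normality through \cref{l:equiv}, and this is where the argument would break down for non-normal logics. The other delicate point is the literal bookkeeping: the guard must be written as $\BoxOp^{\leq d}(\neg p\vee q)$, so that it contributes only the literals $\neg p$ and $q$ and no $\neg q$.
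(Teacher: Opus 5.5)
Your proof is correct, and its core is the paper's Lyndon trick in mirror image. The paper derives $\bigwedge_{i}\BoxOp^{\leq d}(\neg q_i\vee p_i)\wedge\varphi(\vec q)\preceq_\Lambda\varphi(\vec p)$ and reads the positive formula off the consequent side. You instead guard the antecedent with $\BoxOp^{\leq d}(\neg p\vee q)$, obtain a $q$-positive interpolant, and rename $q$ to $p$. In both versions normality is used twice: in \cref{l:equiv}, and when $\BoxOp^{\leq d}(\neg p\vee p)$ is discharged after substitution.

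The difference is the outer reduction. The paper first eliminates parameters via \cref{par-norm}, a second normality trick that replaces each $\neg r_j$ by a fresh $r'_j$, and then treats all monotone variables with a single application of LIP. You keep $\bar r$ inside the interpolation step, where it is harmless because $\bar r$-literals are simply allowed in the interpolant. You then handle only one monotone variable and spend LIP a second time via the corollary following \cref{lpp-mono}.

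In fact neither reduction is needed. Your computation goes through verbatim for a tuple $\vec p$ with guard $\bigwedge_i\BoxOp^{\leq d}(\neg p_i\vee q_i)$ in the presence of parameters, and yields full LPP in one step; the same is true of the paper's $\eta$.

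The paper's route has the side benefit of isolating \cref{par-norm}, a fact about normal logics that needs no LIP. Your route has the benefit of not depending on it, and this is not purely cosmetic. The last $\sim_{\mathrm{K}}$ in the displayed proof of \cref{par-norm} drops the conjunct $\bigwedge_j\BoxOp^{\leq d}(r'_j\leftrightarrow\neg r_j)$ illegitimately: already for $\varphi=\neg r$ and $d=0$ one gets $\psi\sim_{\mathrm{K}} r'$, not $\neg r\vee r'$. The proposition itself survives. The correct form $\psi\sim_{\mathrm{K}}\bigl(\varphi\wedge\bigwedge_j\BoxOp^{\leq d}(r'_j\leftrightarrow\neg r_j)\bigr)\vee\bigvee_j\DmdOp^{\leq d}(r_j\wedge r'_j)$ already gives both the $\vec p$-monotonicity of $\psi$ and $\psi(\vec p,\bar r,\neg\bar r)\sim_{\mathrm{K}}\varphi$.
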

    \begin{proof}
      By \cref{par-norm}, it is sufficient to prove ${\rm LPP}(\omega,0)$. Suppose that a formula
      $\varphi(\vec p)$ is monotone in $\Lambda$. Let $d$ be the modal depth of $\varphi$. By
      \cref{l:equiv},
      \begin{gather*}
        \mathrm{K} \vdash \bigwedge_{i < n}\BoxOp^{\leq d}\bigl((p_i \vee q_i) \leftrightarrow
        p_i\bigr) \to \bigl(\varphi(\vec p \vee \vec q) \leftrightarrow \varphi(\vec p)\bigr).
      \end{gather*}
      Therefore, since $\Lambda \vdash \varphi(\vec q) \to \varphi(\vec p \vee \vec q)$,
      \begin{gather*}
        \Lambda \vdash \bigwedge_{i < n}\BoxOp^{\leq d}(q_i \to p_i) \to \bigl(\varphi(\vec q) \to
        \varphi(\vec p)\bigr),
      \end{gather*}
      that is, $\Lambda \vdash \eta(\vec p, \vec q) \to \varphi(\vec p)$, where
      \begin{gather*}
        \eta(\vec p, \vec q) = \bigwedge_{i < n}\BoxOp^{\leq d}(\neg q_i \vee p_i) \wedge
        \varphi(\vec q).
      \end{gather*}
      Since $\mathrm{lits}(\eta(\vec p, \vec q)) \subseteq \vec p \cup \vec q^\pm$ and
      $\mathrm{lits}(\varphi(\vec p)) \subseteq \vec p^\pm$, by LIP, there is a positive formula
      $\alpha(\vec p)$ such that
      \begin{gather*}
        \eta(\vec p, \vec q) \preceq_\Lambda \alpha(\vec p) \preceq_\Lambda \varphi(\vec p).
      \end{gather*}
      Substituting $\vec p$ for $\vec q$, we obtain
      \begin{gather*}
        \varphi(\vec p) \sim_\Lambda \eta(\vec p, \vec p) \preceq_\Lambda \alpha(\vec p)
        \preceq_\Lambda \varphi(\vec p).
      \end{gather*}
      Thus, $\varphi \sim_\Lambda \alpha$.
    \end{proof}
    Notice that there are two conditions on $\Lambda$ in~\cref{lynd}:
    \begin{itemize}
      \item $\Lambda$ must be normal. This condition is necessary for \cref{l:equiv}, but we do not
            know if it is actually essential for \cref{lynd}, that is, if LIP $\Rightarrow$ LPP for
            non-normal logics. We will show that normality is not necessary for LPP: all canonical
            monotone logics which are preserved under bisimulation products have this property.
      \item $\Lambda$ must have LIP. We will show that this condition is essential. For example, all
            logics between $\mathrm{K4}.3$ and $\mathrm{S4}.3$ lack LPP. At the same time it is not
            necessary: there are infinitely many tabular extensions of $\mathrm{S4}$ without LIP
            (and even CIP) in which LPP holds. However, it is an interesting open question whether
            CIP and (some strengthened version of) LPP imply LIP.
    \end{itemize}
  \subsection{Monotonicity vs positivity in the logics of linear frames}
    It is well-known that all extensions of $\mathrm{K4}.3$ with unbounded height lack
    CIP~\cite[Proposition 10.23]{GabMaks05}. Here, we will show that LPP in all logics between
    $\mathrm{K4}.3$ and $\mathrm{S4}.3$. For other logics of linear frames of unbounded height such
    as $\mathrm{GL}.3$ and $\mathrm{Grz}.3$, LPP is left open (see also a note at the end of this
    section).

    Let us fix variables $p$, $r$, and $s$. We will use the following abbreviations:
    $\DmdOp_+\eta \defeq \DmdOp(\neg s \wedge r \wedge \eta)$,
    $\DmdOp_-\eta \defeq \DmdOp(\neg s \wedge \neg r \wedge \eta)$. Consider the formula
    \begin{gather*}
      \varphi(p, r, s) \defeq \DmdOp_-\DmdOp_+\neg p \wedge \BoxOp_+\BoxOp_-\BoxOp_+ p \vee
      \BoxOp_+ p.
    \end{gather*}
    \vspace{-4ex}
    \begin{lemma}
      \label{l:lin-1} $\varphi(p, r, s)$ is $p$-monotone in $\mathrm{K4}.3$
    \end{lemma}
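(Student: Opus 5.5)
To prove that $\varphi(p, r, s)$ is $p$-monotone in $\mathrm{K4}.3$ I would argue semantically on Kripke frames. $\mathrm{K4}.3$ is the logic of the class $\mathfrak{C}$ of transitive, weakly connected Kripke frames, i.e.\ frames with $\A{w}\A{u,v \in R\{w\}}(u = v \vee u \mathrel R v \vee v \mathrel R u)$; this is the standard Kripke completeness result, which I take as known. By \cref{mon-char} (\cref{it:mc-4} $\Rightarrow$ \cref{it:mc-1}), or directly by \cref{mon-equiv}, it then suffices to show that $\varphi$ is $p$-monotone in every frame $(W, R) \in \mathfrak{C}$.

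So fix such a frame and valuations $\vartheta_1 \leq_{p} \vartheta_2$, so that they agree on $r$ and $s$. Fix $w \in \vartheta_1(\varphi)$; I must show $w \in \vartheta_2(\varphi)$. Write ``$+$-world'' for a world satisfying $\neg s \wedge r$ and ``$-$-world'' for one satisfying $\neg s \wedge \neg r$. These notions coincide under both valuations.

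Both $\BoxOp_+ p$ and $\BoxOp_+\BoxOp_-\BoxOp_+ p$ are $p$-positive, so they are preserved when passing from $\vartheta_1$ to $\vartheta_2$. Hence the only case needing work is the following. Suppose the first disjunct holds at $w$ under $\vartheta_1$, and $\BoxOp_+ p$ fails at $w$ under $\vartheta_2$. Then it remains to show that $\DmdOp_-\DmdOp_+\neg p$ holds at $w$ under $\vartheta_2$.

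The data available in this case are:
\begin{itemize}
  \item from $\vartheta_1$, a $-$-world $u$ with $w \mathrel R u$ and a $+$-world $t$ with $u \mathrel R t$ and $t \notin \vartheta_1(p)$;
  \item $w \in \vartheta_1(\BoxOp_+\BoxOp_-\BoxOp_+ p)$;
  \item from the failure of $\BoxOp_+ p$ under $\vartheta_2$, a $+$-world $v$ with $w \mathrel R v$ and $v \notin \vartheta_2(p)$.
\end{itemize}

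The key step is to apply weak connectedness to the two successors $u$ and $v$ of $w$. First, $u \neq v$, since they disagree on $r$. Suppose $v \mathrel R u$. Then $v$ is a $+$-successor of $w$ and $u$ is a $-$-successor of $v$, so $\BoxOp_+\BoxOp_-\BoxOp_+ p$ at $w$ under $\vartheta_1$ yields $u \in \vartheta_1(\BoxOp_+ p)$. Since $t$ is a $+$-successor of $u$, this gives $t \in \vartheta_1(p)$, a contradiction. Therefore $u \mathrel R v$. Now $u$ is a $-$-successor of $w$, and $v$ is a $+$-successor of $u$ falsifying $p$ under $\vartheta_2$. This witnesses $w \in \vartheta_2(\DmdOp_-\DmdOp_+\neg p)$, as required.

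Transitivity is used only implicitly, to guarantee that $\mathrm{K4}.3$ is the logic of $\mathfrak{C}$. The case analysis itself is short. The only point I would double-check is the appeal to completeness of $\mathrm{K4}.3$ with respect to $\mathfrak{C}$, since \cref{mon-char} needs $\Lambda = \mathrm{Log}\,\mathfrak{C}$ exactly. I expect that to be the main, though standard, obstacle.
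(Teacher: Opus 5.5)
Your proof is correct and is essentially the paper's argument: the same witnesses (a $-$-successor of $w$ with a $+$-successor refuting $p$ under $\vartheta_1$, and a $+$-successor refuting $p$ under $\vartheta_2$), the same weak-connectedness trichotomy, and the same use of $\BoxOp_+\BoxOp_-\BoxOp_+ p$ to exclude $v \mathrel R u$. The paper phrases it as a contradiction with all three cases impossible, and leaves implicit the appeal to Kripke completeness of $\mathrm{K4}.3$ that you make explicit; that appeal is standard, since $\mathrm{K4}.3$ is Sahlqvist.
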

    \begin{proof}
      Let $\mathcal{F} = (W, R)$ be a $\mathrm{K4}.3$-frame, $w \in W$,
      $\mathcal{M}_1 = (\mathcal{F}, \vartheta_1)$, and
      $\mathcal{M}_2 = (\mathcal{F}, \vartheta_2)$. Suppose that $\vartheta_1 \leq_p \vartheta_2$,
      $\mathcal{M}_1, w \vDash \varphi$, and $\mathcal{M}_2, w \nvDash \varphi$. We denote
      \begin{gather*}
        V_+ \defeq \vartheta_1(r \wedge \neg s) = \vartheta_2(r \wedge \neg s),\quad V_- \defeq
        \vartheta_1(\neg r \wedge \neg s) = \vartheta_2(\neg r \wedge \neg s).
      \end{gather*}
      Since $\BoxOp_+\BoxOp_-\BoxOp_+ p$ and $\BoxOp_+ p$ are $p$-positive, it is easy to see that
      \begin{itemize}
        \item $\DmdOp_-\DmdOp_+ \neg p$ and $\BoxOp_+\BoxOp_-\BoxOp_+ p$ are true at $w$ in
              $\mathcal{M}_1$;
        \item $\DmdOp_-\DmdOp_+ \neg p$ and $\BoxOp_+ p$ are false at $w$ in $\mathcal{M}_2$.
      \end{itemize}
      Therefore,
      \begin{itemize}
        \item there are $v_- \in V_-$ and $v_+ \in V_+$ such that $w \mathrel R v_- \mathrel R v_+$
              and $\mathcal{M}_1, v_+ \nvDash p$;
        \item there is $u_+ \in V_+$ such that $w \mathrel R u_+$ and
              $\mathcal{M}_2, u_+ \nvDash p$.
      \end{itemize}
      Since $v_-, u_+ \in R\{w\}$ and $\mathcal{F} \vDash \mathrm{K4}.3$, one of the following cases
      holds:
      \begin{itemize}
        \item $v_- = u_+$. This is impossible, since $V_- \cap V_+ = \emptyset$;
        \item $v_- \mathrel R u_+$. This is impossible, since
              $\mathcal{M}_2, w \nvDash \DmdOp_-\DmdOp_+ \neg p$;
        \item $u_+ \mathrel R v_-$. This is impossible, since
              $\mathcal{M}_1, w \vDash \BoxOp_+\BoxOp_-\BoxOp_+ p$.
      \end{itemize}
    \end{proof}
    Consider the Kripke frame $\mathcal{F}_0 \defeq (W_0, R_0)$, where
    \begin{gather*}
      W_0 \defeq \{0, 1, 2, 3, 4\},\quad R_0 \defeq \{(i, j) \mid 4 \geq i \geq j \geq 0\} \cup
      \{(0, 1)\}.
    \end{gather*}
    Notice that $\mathcal{F}_0$ is an $\mathrm{S4}.3$-frame.
    \begin{lemma}
      \label{l:lin-2} There is no $p$-positive formula $\alpha(p, r, s)$ such that
      $\mathcal{F}_0 \vDash \varphi \leftrightarrow \alpha$.
    \end{lemma}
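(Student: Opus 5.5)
The plan is to argue model-theoretically: exhibit two models on $\mathcal{F}_0$ together with a $p$-directed bisimulation that preserves every $p$-positive formula but not $\varphi$. Suppose some $p$-positive $\alpha(p,r,s)$ satisfies $\mathcal{F}_0 \vDash \varphi \leftrightarrow \alpha$. I would construct two valuations $\vartheta_1, \vartheta_2$ on $\mathcal{F}_0$ and a relation $Z \subseteq W_0 \times W_0$ with three properties. First, $Z$ is a $p$-directed bisimulation between $\mathcal{M}_1 = (\mathcal{F}_0, \vartheta_1)$ and $\mathcal{M}_2 = (\mathcal{F}_0, \vartheta_2)$, i.e.\ it satisfies $({\rm zig}_\mathrm{K})$, $({\rm zag}_\mathrm{K})$ and $({\rm lit})$ for all literals except $\neg p$. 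Second, some pair $(w_1,w_2)\in Z$ has $\mathcal{M}_1, w_1 \vDash \varphi$. Third, for that pair $\mathcal{M}_2, w_2 \nvDash \varphi$. By \cref{bisim-main}, $Z$ preserves $\alpha$, and $\alpha$ agrees with $\varphi$ in both models, which gives a contradiction. Equivalently, I could appeal to \cref{pos-char}, but the direct argument via \cref{bisim-main} suffices and only uses the frame $\mathcal{F}_0$.

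Next I would choose the witness worlds. By the structure of $\varphi$, we need $\mathcal{M}_1, w_1 \vDash \DmdOp_-\DmdOp_+\neg p \wedge \BoxOp_+\BoxOp_-\BoxOp_+ p$ and $\mathcal{M}_2, w_2 \vDash \BoxOp_-\BoxOp_+ p \wedge \DmdOp_+\neg p$ (or failure of the $\BoxOp_+\BoxOp_-\BoxOp_+p$ part). On a linear order without clusters this is exactly what \cref{l:lin-1} forbids for monotone variation. So the counterexample must exploit the non-trivial cluster $\{0,1\}$ of $\mathcal{F}_0$, where a $+$-point and a $-$-point see each other. Since $({\rm lit})$ allows $\neg p$ in $\mathcal{M}_1$ to become $p$ in $\mathcal{M}_2$ but not conversely, the $+$-point carrying $\neg p$ in $\mathcal{M}_2$ must be $Z$-related to a $\neg p$ point of $\mathcal{M}_1$.

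Concretely, I would first try the following valuations, using $s$ as a ``neutral'' marker on the upper points $2,3,4$ where needed. In $\mathcal{M}_1$, take $w_1$ high, a $-$-point in the middle, and a $+$-point with $\neg p$ at the bottom, with no $-$-point below any $+$-point visible from $w_1$. Then $\DmdOp_-\DmdOp_+\neg p$ and $\BoxOp_+\BoxOp_-\BoxOp_+p$ hold at $w_1$. In $\mathcal{M}_2$, take $w_2$ high, a $+$-point with $\neg p$ in the middle, and the cluster $\{0,1\}$ split into a $-$-point and a $+$-point with $p$. Then every $-$-point sees only $p$-labelled $+$-points, so $\DmdOp_-\DmdOp_+\neg p$ fails, and $\BoxOp_+p$ fails too. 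The relation $Z$ would map levels to levels, sending the bottom $\neg p$-point of $\mathcal{M}_1$ to the $p$-points of the cluster in $\mathcal{M}_2$. It would also relate it to the middle $\neg p$-point of $\mathcal{M}_2$, which is what witnesses $\DmdOp_+\neg p$ there.

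The main obstacle is the bookkeeping of $({\rm zig}_\mathrm{K})$ and $({\rm zag}_\mathrm{K})$. A $\neg p$ $+$-point of $\mathcal{M}_2$ that sees a $-$-point has to be matched by an $\mathcal{M}_1$-point which also sees a $-$-point, while $\mathcal{M}_1$ must still satisfy $\BoxOp_+\BoxOp_-\BoxOp_+p$ at $w_1$. The first naive choices fail on exactly this point. I would resolve it by adjusting the $r,s$-labels on the cluster and on the point $2$, possibly letting both cluster points in $\mathcal{M}_1$ be matched to different $\mathcal{M}_2$-points and using reflexivity. I would then check the finitely many back-and-forth conditions by hand, together with the evaluation of $\varphi$ at $w_1$ and $w_2$.
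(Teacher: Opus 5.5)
Your strategy is the right one and matches the paper's: build $\mathcal{M}_1,\mathcal{M}_2$ on $\mathcal{F}_0$ and a $p$-directed bisimulation $Z$ relating a world where $\varphi$ holds to one where it fails, then invoke \cref{bisim-main}. But for this lemma the explicit witnesses are the whole proof, and you never give them. Worse, the one concrete design you describe is impossible. Let $m_2$ be the middle $+\neg p$-point of your $\mathcal{M}_2$; it sees the $-$-point of the cluster. $({\rm zag}_\mathrm{K})$ at $(w_1,w_2)$ yields $v_1\in R_0\{w_1\}$ with $v_1 \mathrel Z m_2$. Then $({\rm lit})$ for $s$, $\neg r$ and $p$ forces $v_1$ to be a $+$-point with $v_1\notin\vartheta_1(p)$. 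Next, $({\rm zag}_\mathrm{K})$ at $(v_1,m_2)$ yields a $-$-point $u_1\in R_0\{v_1\}$. This contradicts your requirement that no $-$-point lie below a $+$-point visible from $w_1$. Moreover, since $v_1$ refutes $\BoxOp_+p$ at $w_1$, $\varphi$ at $w_1$ forces $\BoxOp_+\BoxOp_-\BoxOp_+p$ there. So every $+$-point seen by $u_1$ satisfies $p$; in particular $u_1$ does not see $v_1$, hence $v_1\notin\{0,1\}$. Thus the $\neg p$ $+$-point of $\mathcal{M}_1$ cannot sit at the bottom: it must sit strictly above a $-$-point that sees only $+p$-points. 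You noticed this tension (``the first naive choices fail'') but left its resolution as unspecified relabelling, so there is a genuine gap.

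The missing idea has four parts:
\begin{itemize}
  \item give the bottom cluster the same labels in both models, a $+p$-point $0$ and a $-$-point $1$;
  \item put the $+\neg p$-point at $2$ in both models;
  \item in $\mathcal{M}_1$, use a second $-$-point at $3$, above $2$, as the witness for $\DmdOp_-\DmdOp_+\neg p$ at $w_1=4$;
  \item use $s$ to neutralize $w_1$ and $w_2$.
\end{itemize}
This is what the paper does. Take $\vartheta_1(p)=\vartheta_2(p)=\{0\}$, $\vartheta_1(r)=\vartheta_2(r)=\{0,2\}$, $\vartheta_1(s)=\{4\}$, $\vartheta_2(s)=\{3\}$, $w_1=4$, $w_2=3$, and $Z=\{(0,0),(1,1),(2,2),(2,0),(3,1),(4,3)\}$.

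In $\mathcal{M}_1$, the only $-$-point seen by a $+$-point visible from $4$ is $1$, and $1$ sees only the $p$-point $0$; so $\varphi$ holds at $4$. In $\mathcal{M}_2$, the only $-$-point visible from $3$ is $1$, and $2$ refutes $\BoxOp_+p$; so $\varphi$ fails at $3$.

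Note that $Z$ does not map levels to levels. It collapses the chain $4\,R_0\,3\,R_0\,2$ of $\mathcal{M}_1$ onto $3\,R_0\,1\,R_0\,0$, sending the $\neg p$-point $2$ to the $p$-point $0$; this is the only place $p$-directedness is used. The pair $(2,2)$ handles zag for $3\,R_0\,2$ in $\mathcal{M}_2$. The cluster edge $0\,R_0\,1$ is exactly what makes zig at $(2,0)$ succeed. The remaining checks of $({\rm zig}_\mathrm{K})$, $({\rm zag}_\mathrm{K})$ and $({\rm lit})$ are finite.
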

    \begin{proof}
      Let $\mathcal{M}_1 \defeq (\mathcal{F}_0, \vartheta_1)$,
      $\mathcal{M}_2 \defeq (\mathcal{F}_0, \vartheta_2)$, where valuations are defined as follows:
      $\vartheta_1(p) = \vartheta_2(p) \defeq \{0\}$,
      $\vartheta_1(r) = \vartheta_2(r) \defeq \{0, 2\}$, $\vartheta_1(s) \defeq \{4\}$, and
      $\vartheta_2(s) \defeq \{3\}$. It is easy to see that $\mathcal{M}_1, 4 \vDash \varphi$ and
      $\mathcal{M}_2, 3 \nvDash \varphi$. At the same time, we will show that
      \begin{gather*}
        Z \defeq \{(0, 0), (1, 1), (2, 2), (2, 0), (3, 1), (4, 3)\}
      \end{gather*}
      is a $p$-directed bisimulation between $\mathcal{M}_1$ and $\mathcal{M}_2$, whence $\varphi$
      can not be equivalent to a $p$-positive formula by \cref{bisim-main}. The condition
      $({\rm lit})$ clearly holds.

      $({\rm zig}_\mathrm{K})$. Let $w_1, v_1, w_2 \in W_0$ be such that $w_1 \mathrel R_0 v_1$ and
      $w_1 \mathrel Z w_2$. We put
      \begin{gather*}
        v_2 \defeq
        \begin{cases}
          0 &\text{if } v_1 \in \{0, 2\},\\
          1 &\text{if } v_1 \in \{1, 3\},\\
          3 &\text{if } v_1 = 4.
        \end{cases}
      \end{gather*}
      It is easy to see that $v_1 \mathrel Z v_2$. If $v_2 \in \{0, 1\}$, then clearly
      $w_2 \mathrel R_0 v_2$. Otherwise, $v_1 = w_1 = 4$, whence $w_2 = 3 \mathrel R_0 v_2$.

      $({\rm zag}_\mathrm{K})$. Let $w_1, w_2, v_2 \in W_0$ be such that $w_1 \mathrel Z w_2$ and
      $w_2 \mathrel R_0 v_2$. We put
      \begin{gather*}
        v_1 \defeq
        \begin{cases}
          v_2 &\text{if } v_2 \in \{0, 1, 2\},\\
          4 &\text{if } v_2 = 3.
        \end{cases}
      \end{gather*}
      It is easy to see that $v_1 \mathrel Z v_2$. If $v_1 \in \{0, 1\}$, then clearly
      $w_1 \mathrel R_0 v_1$. Otherwise, either $v_1 = v_2 = 2$, $w_2 \in \{2, 3\}$, and
      $w_1 \in \{2, 4\}$ or $w_2 = v_2 = 3$ and $w_1 = v_1 = 4$. In both cases,
      $w_1 \mathrel R_0 v_1$.
    \end{proof}
    \begin{proposition}
      \label{ex:lin} ${\rm LPP}(1,2)$ does not hold in all logics between $\mathrm{K4}.3$ and
      $\mathrm{Log}\, \mathcal{F}_0$.
    \end{proposition}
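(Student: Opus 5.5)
Fix a logic $\Lambda$ with $\mathrm{K4}.3 \subseteq \Lambda \subseteq \mathrm{Log}\, \mathcal{F}_0$. The plan is to show that the single formula $\varphi(p, r, s)$ witnesses the failure of ${\rm LPP}(1,2)$ in $\Lambda$. This formula has one distinguished variable $p$ and two parameters $r, s$, so it lies in the scope of ${\rm LPP}(1,2)$ with $\vec p = (p)$ and $\bar r = (r, s)$. Two things have to be checked: that $\varphi$ is $p$-monotone in $\Lambda$, and that $\varphi$ is not $\Lambda$-equivalent to any $p$-positive formula $\alpha(p, r, s)$.

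For monotonicity, \cref{l:lin-1} gives $p$-monotonicity with respect to all Kripke frames validating $\mathrm{K4}.3$. Since $\mathrm{K4}.3$ is Kripke complete (indeed canonical), $\mathrm{K4}.3 = \mathrm{Log}$ of the class of its frames. So by \cref{mon-equiv}, or by \cref{mon-char} applied to $\mathfrak{C}$ the class of $\mathrm{K4}.3$-frames, $\varphi$ is $p$-monotone in $\mathrm{K4}.3$, that is,
\begin{gather*}
\mathrm{K4}.3 \vdash \varphi(p, r, s) \to \varphi(p \vee q, r, s).
\end{gather*}
This derivation persists in every extension, so $\varphi$ is $p$-monotone in $\Lambda$.

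For non-positivity, suppose towards a contradiction that $\varphi \sim_\Lambda \alpha$ for some $p$-positive $\alpha(p, r, s)$. Then $\Lambda \vdash \varphi \leftrightarrow \alpha$. Since $\Lambda \subseteq \mathrm{Log}\, \mathcal{F}_0$, we get $\mathcal{F}_0 \vDash \varphi \leftrightarrow \alpha$, which contradicts \cref{l:lin-2}. Hence $\Lambda$ fails ${\rm LPP}(1,2)$.

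The only nontrivial point is that \cref{l:lin-1} is stated semantically, via frames, while $p$-monotonicity in a logic is syntactic. Kripke completeness of $\mathrm{K4}.3$ bridges this gap. Alternatively, the frame-based argument in \cref{l:lin-1} applies verbatim to the canonical frame of $\mathrm{K4}.3$, which is a $\mathrm{K4}.3$-frame. Everything else is direct.
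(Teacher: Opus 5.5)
Your proposal is correct and follows the paper's own route, which simply says the proposition follows from \cref{l:lin-1,l:lin-2}: $p$-monotonicity is derived in $\mathrm{K4}.3$ and passes up to every extension, and equivalence to a $p$-positive formula would have to hold in $\mathcal{F}_0$ for any logic contained in $\mathrm{Log}\, \mathcal{F}_0$. Your remark that Kripke completeness of $\mathrm{K4}.3$ (or, alternatively, its canonicity) is what turns the frame-based argument of \cref{l:lin-1} into derivability in $\mathrm{K4}.3$ spells out a step the paper leaves implicit.
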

    \begin{proof}
      Follows from \cref{l:lin-1,l:lin-2}.
    \end{proof}
    Notice that a finite frame $\mathcal{F}_0$ was sufficient to prove that $\mathrm{K4}.3$ and
    $\mathrm{S4}.3$ lack LPP. This is not accidental: arguing similar to \cite[Theorem 3.2]{KWZ23},
    one can show that all canonical subframe extensions of $\mathrm{K4}.3$ have the following
    property: if $\varphi$ entails $\psi$ under $\tau$-bisimulations between finite
    $\Lambda$-models, then $\varphi$ entails $\psi$ under $\trianglelefteq_\tau$ on
    $\mathcal{M}_\Lambda$. Therefore, by~\cref{pos-char}, $\varphi$ is $\Lambda$-equivalent to a
    $\vec p$-positive formula iff it is preserved under $\vec p$-directed bisimulations between
    finite $\Lambda$-models. The situation is quite different for the non-canonical extensions of
    $\mathrm{K4}.3$ such as $\mathrm{GL}.3$ and $\mathrm{Grz}.3$. Examples 3.5 and 3.6
    from~\cite{KWZ23} demonstrate that preservation under bisimulations between finite models is not
    sufficient for preservation between arbitrary bisimulations. Moreover, one can prove that
    counterexamples for LPP in $\Lambda \in \{\mathrm{GL}.3, \mathrm{Grz}.3\}$ can not be found
    using finite models: if $\varphi$ is $\vec p$-monotone in $\Lambda$, then it is preserved under
    $\vec p$-directed bisimulations between finite $\Lambda$-models.
  \subsection{Monotonicity vs positivity in tabular extensions of S4}
    A normal logic $\Lambda$ is \emph{tabular} if $\Lambda = \mathrm{Log}\, \mathcal{F}$ for a
    finite Kripke frame $\mathcal{F}$. In this section, we consider particular tabular extensions of
    $\mathrm{S4}$, namely, the logics of the following frames:
    \begin{gather*}
      \mathcal{C}_k \defeq (\underbar{k}, \underbar{k} \times \underbar{k}) \quad\text{and}\quad
      \mathcal{D}_k \defeq (\underbar{k+1}, \underbar{k} \times \underbar{k} \cup \{k\} \times
      \underbar{k+1}),
    \end{gather*}
    where $k \geq 1$, $\underbar{k} \defeq \{i \mid 0 \leq i < k\}$. The frames $\mathcal{C}_k$ are
    also known as \emph{clusters}. It is known that~\cite{GabMaks05,Kur24}
    \begin{itemize}
      \item $\mathrm{Log}\, \mathcal{C}_1 = \mathrm{Triv}$,
            $\mathrm{Log}\, \mathcal{D}_1 = {\rm GW}.2$, and $\mathrm{Log}\, \mathcal{D}_2$ have
            LIP;
      \item $\mathrm{Log}\, \mathcal{C}_2$ has CIP, but does not have LIP;
      \item $\mathrm{Log}\, \mathcal{C}_k$ and $\mathrm{Log}\, \mathcal{D}_k$, $3 \leq k < \omega$
            do not have CIP.
    \end{itemize}
    $\mathrm{Log}\, \mathcal{C}_\omega = \mathrm{S5}$ and
    $\mathrm{Log}\, \mathcal{D}_\omega = \mathrm{S4}.4$ also have LIP, though they are not tabular.
    We will show that
    \begin{itemize}
      \item $\mathrm{Log}\, \mathcal{C}_k$ for $2 \leq k < \omega$ do not have LPP;
      \item $\mathrm{Log}\, \mathcal{D}_k$ for $1 \leq k < \omega$ have LPP.
    \end{itemize}
    LPP for $\mathrm{Triv}$, $\mathrm{S5}$, and $\mathrm{S4}.4$ follows from \cref{lynd}.
    \subsubsection{Logics lacking LPP}
      \begin{proposition}
        \label{ex:cn-par} Let $m, k < \omega$ be such that $2 \leq k \leq 2^m + 1$. Then
        $\mathrm{Log}\, \mathcal{C}_k$ does not have ${\rm LPP}(1,m)$.
      \end{proposition}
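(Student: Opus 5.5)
The plan is to exhibit, for $2\le k\le 2^m+1$, a formula $\varphi(p,\bar r)$ with $\bar r=(r_j)_{j<m}$ such that two things hold. First, $\varphi$ is $p$-monotone in $\mathrm{Log}\,\mathcal{C}_k$. Second, $\varphi$ is not preserved under some $p$-directed bisimulation between $\mathrm{Log}\,\mathcal{C}_k$-models. By \cref{pos-char} and \cref{bisim-main}, the second point means $\varphi$ is not $\mathrm{Log}\,\mathcal{C}_k$-equivalent to any $p$-positive formula.

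Everything is computed on the cluster, where $\DmdOp\psi$ holds at a world iff $\psi$ holds somewhere. The parameters $\bar r$ colour the $k$ worlds by at most $2^m$ colours $c$. For a colour $c$ we write $c$ also for the conjunction of $\bar r$-literals defining it. The model case $m=0$, $k=2$ guides the construction. There the formula
\begin{gather*}
  \varphi_0(p)\defeq (p\wedge\BoxOp p)\vee(\neg p\wedge\DmdOp p)
\end{gather*}
says "$p$ holds at the other world". On $\mathcal{C}_2$ this is the Boolean function $(a,b)\mapsto b$ of $a=p(\text{here})$ and $b=p(\text{there})$, so it is monotone. On the other hand it is not preserved under the $p$-directed bisimulation from $\mathcal{C}_2$ with $p$ true at one point $w$ only onto the reflexive point $\mathcal{C}_1$ with $p$ true. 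Here $\mathcal{C}_1$ is a $\mathrm{Log}\,\mathcal{C}_2$-frame, being a morphic image of $\mathcal{C}_2$. Indeed, $\varphi_0$ holds at the $p$-free point of $\mathcal{C}_2$ but fails at the point of $\mathcal{C}_1$ with $p$ true.

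For general $k$ I would use the colours to keep this "somewhere else" reading correct. The intended meaning of $\varphi$ is "$p$ holds at some world other than the current one". It would be written as a disjunction over colours $c$ of
\begin{gather*}
  c\wedge\Bigl(\bigvee_{c'\neq c}\DmdOp(c'\wedge p)\vee(\neg p\wedge\DmdOp(c\wedge p))\vee(p\wedge\BoxOp(c\to p))\Bigr).
\end{gather*}
The intended meaning is a monotone Boolean function of the valuation of $p$, so monotonicity should follow via \cref{mon-equiv}: check, for each fixed colouring $\bar c$, that $\vec a\mapsto\varphi_{\mathfrak A}(\vec a,\bar c)$ is monotone on $\mathcal{C}_k^*$. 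This check is exact only when every colour class has at most two worlds. With $k$ worlds and only $2^m$ colours, a class of size three or more can occur.

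The main obstacle is therefore to make $\varphi$ monotone for \emph{all} colourings, including those with large colour classes. My first attempt would be to add a disjunct that makes $\varphi$ true whenever the colouring is "degenerate", i.e.\ whenever the colouring does not split the $k$ worlds into at most one pair plus singletons. A case analysis would then reduce monotonicity to the good colourings. Deciding whether that degeneracy is expressible in the language without spoiling monotonicity is where the hypothesis $k\le 2^m+1$ must enter. Failing that, I would redesign $\varphi$ so that its value on a large class depends monotonically only on coarse counts of $p$.

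For non-positivity I would generalise the $m=0$ example. Take a colouring of $\mathcal{C}_k$ using $k-1\le 2^m$ colours, with exactly one colour class $\{w,w'\}$ of size two. Put $p$ true at $w$ only. Map this model onto $\mathcal{C}_{k-1}$ by the morphism collapsing $w,w'$, with $p$ true at the image of $w$, and take $Z$ to be this morphism. Conditions (zig) and (zag) hold because $Z$ is a surjective map between clusters. Condition (lit) holds because $Z$ preserves every $\bar r$-literal and $p$, and the only literal it fails to preserve is $\neg p$ at $w'$, which is allowed for a $p$-directed bisimulation. Then $\varphi$ is true at $w'$, because $p$ holds elsewhere. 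At the image point $\varphi$ is false, because $p$ holds nowhere else there. So $Z$ does not preserve $\varphi$, which finishes the argument.
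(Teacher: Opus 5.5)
Your proposal has a genuine gap in each half. For monotonicity, you never produce a working formula. As you note, your colour-by-colour formula is not $p$-monotone once a colour class has three worlds. For instance, colour all worlds of $\mathcal{C}_3$ alike and put $p$ only at $w'$: then $\varphi$ holds at $w$ via $\neg p\wedge\DmdOp(c\wedge p)$, but it fails at $w$ after $p$ is added at $w$. Such colourings cannot be excluded, because every $\bar r$-valuation counts. Your suggested repair is left open.

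The missing idea is to aim for ``$p$ holds at every world except possibly this one'' and let counting restrict the colourings automatically. Take distinct complete conjunctions $\alpha_1,\dots,\alpha_{k-1}$ of $\bar r$-literals; this is exactly where $k-1\le 2^m$ is used. Set $\varphi\defeq\BoxOp p\vee\bigl(\neg p\wedge\bigwedge_{i=1}^{k-1}\DmdOp(p\wedge\alpha_i)\bigr)$. (The paper's display puts the $\DmdOp$ outside the conjunction, but its argument uses this form.) Suppose the second disjunct holds at $w$. Its witnesses $v_1,\dots,v_{k-1}$ are pairwise distinct, since the $\alpha_i$ are inconsistent with each other, and they differ from $w$, since they satisfy $p$. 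Hence $\vartheta(p)=\underbar{k}\setminus\{w\}$. Any larger valuation of $p$ either equals this one or makes $\BoxOp p$ true. This handles all colourings at once.

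The non-positivity half fails for a structural reason. Your $Z$ is a $p$-directed morphism between models on $\mathcal{C}_k$ and $\mathcal{C}_{k-1}$, and both are $\mathrm{Log}\,\mathcal{C}_k$-frames. A function is zigzag-free (take $Z_1\defeq Z$, $Z_2\defeq\emptyset$). So by \cref{l:939} (see also \cref{mon-char}), \emph{every} $p$-monotone formula is preserved under it, and no choice of $\varphi$ can make this witness work.

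Concretely, your evaluation at the image point is wrong. Already for $m=0$, the disjunct $p\wedge\BoxOp p$ of $\varphi_0$ is true at the point of $\mathcal{C}_1$ where $p$ holds. In general, $p\wedge\BoxOp(c\to p)$ is true at the collapsed point, whose colour class is now a singleton; on singleton classes your formula says ``$p$ holds somewhere'', not ``somewhere else''.

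You need a bisimulation containing a genuine zigzag, and the paper stays inside $\mathcal{C}_k$. Colour worlds $0,1$ by $\alpha_1$ and each world $i\ge 2$ by $\alpha_i$. Put $\vartheta_1(p)=\underbar{k}\setminus\{0\}$ and $\vartheta_2(p)=\underbar{k}\setminus\{1\}$, and let $Z=\{(0,0),(0,1),(1,0)\}\cup\{(i,i)\mid 2\le i<k\}$. This is a full relation on a cluster, hence a bisimulation, and it satisfies $({\rm lit})$ for every literal except $\neg p$. Finally, $\varphi$ holds at $0$ in the first model but fails at $0$ in the second.
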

      \begin{proof}
        Let $\alpha_i(\bar r), i = 1, \dots, k-1$ be distinct formulas of the form
        \begin{gather*}
          l_0 \wedge \dots \wedge l_{m-1}, \quad\text{where } l_j \in \{r_j\}^\pm
        \end{gather*}
        (if $m = 0$, then $\alpha_1 = \top$). Consider the formula
        $\varphi \defeq \BoxOp p \vee \neg p \wedge \DmdOp\bigwedge_{i=1}^{k-1}(p \wedge \alpha_i)$.

        Let us show that $\varphi$ is $p$-monotone in $\mathcal{C}_k$. Suppose that $\varphi$ is
        true at $w < k$ in a model $\mathcal{M} = (\mathcal{C}_k, \vartheta)$. We will show that
        $\vartheta(p) \supseteq \underbar{k} \setminus \{w\}$. If $\mathcal{M}, w \vDash \BoxOp p$,
        this trivially holds. Otherwise, $w \notin \vartheta(p)$ and there are worlds
        $v_1, \dots, v_{k-1} \in \vartheta(p)$ such that $\mathcal{M}, v_i \vDash \alpha_i$. Since
        $\alpha_i$ are pairwise inconsistent, all $v_i$ are distinct. Also, since
        $w \in \vartheta(p)$ and $v_i \notin \vartheta(p)$, $w \neq v_i$ for all
        $i = 1, \dots, k-1$, whence $\underbar{k} = \{w, v_1, \dots, v_{k-1}\}$ and
        $\vartheta(p) = \underbar{k} \setminus \{w\}$. Now suppose that
        $\vartheta' \geq_p \vartheta$. Then $\vartheta'(p) = \vartheta(p)$ or
        $\vartheta'(p) = \underbar{k}$. Clearly, in both cases
        $(\mathcal{C}_k, \vartheta') \vDash \varphi$.

        It remains to show that $\varphi$ is not equivalent to any $p$-positive formula in
        $\mathcal{C}_k$. Let $\mathcal{M}_1 = (\mathcal{C}_k, \vartheta_1)$ and
        $\mathcal{M}_2 = (\mathcal{C}_k, \vartheta_2)$ be such that
        \begin{itemize}
          \item $\vartheta_1(p) = \underbar{k} \setminus \{0\}$,
                $\vartheta_2(p) = \underbar{k} \setminus \{1\}$;
          \item $\vartheta_1(\alpha_1) = \vartheta_2(\alpha_1) = \{0, 1\}$
          \item $\vartheta_1(\alpha_i) = \vartheta_2(\alpha_i) = \{i\}$ for $2 \leq i < k$.
        \end{itemize}
        It is easy to see that $\mathcal{M}_1, 0 \vDash \varphi$ and
        $\mathcal{M}_2, 0 \nvDash \varphi$. At the same time,
        \begin{gather*}
          Z \defeq \{(0, 0), (0, 1), (1, 0)\} \cup \{(i, i) \mid 2 \leq i < k\}.
        \end{gather*}
        is a $p$-directed bisimulation between $\mathcal{M}_1$ and $\mathcal{M}_2$. Since $\varphi$
        is not preserved under $Z$, it can not be equivalent to a $p$-positive formula by
        \cref{bisim-main}.
      \end{proof}
      Notice that, for $\mathrm{Log}\, \mathcal{C}_2$ we have the following simple counterexample:
      the formula $\BoxOp p \vee \neg p \wedge \DmdOp p$ is monotone in $\mathcal{C}_2$, but is not
      equivalent in it to any positive formula. For $\mathcal{C}_k$ with $k \geq 3$, we used
      parameters, and more parameters were used for larger clusters. This is in fact unavoidable:
      \begin{proposition}
        \label{cn-par-2} Suppose that $k \geq 2^{2n + m}$. Then $\mathrm{Log}\, \mathcal{C}_k$ has
        ${\rm LPP}(n,m)$.
      \end{proposition}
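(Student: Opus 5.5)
The plan is to work directly in the frame $\mathcal{C}_k$, exploiting two facts. First, on a cluster the truth of a formula at a world depends only on the ``type'' of that world and on the set of types realized in the whole cluster. Second, the bound $k \geq 2^{2n+m}$ is exactly what is needed to realize any ``increasing correspondence'' between two such sets of types inside a single model pair on $\mathcal{C}_k$.

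Fix $\vec p = (p_i)_{i<n}$, $\bar r = (r_j)_{j<m}$ and a $\vec p$-monotone formula $\varphi(\vec p, \bar r)$ in $\mathrm{Log}\,\mathcal{C}_k$. A \emph{type} is a function $t : \vec p \cup \bar r \to \{0,1\}$; there are $2^{n+m}$ of them. Write $t \leq s$ if $t$ and $s$ agree on $\bar r$ and $t(p_i) \leq s(p_i)$ for all $i$. For a model $\mathcal{M} = (\mathcal{C}_k, \vartheta)$, let $S_\mathcal{M}$ be the set of types realized in $\mathcal{M}$, and let $t_\mathcal{M}(w)$ be the type of $w$. Since every world sees every world, an easy induction shows that the truth of $\varphi$ at $w$ depends only on the pair $(S_\mathcal{M}, t_\mathcal{M}(w))$. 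So $\varphi$ determines a set $\mathcal{T}$ of ``true configurations'' $(S,t)$ with $t \in S \neq \emptyset$. For each type $s$, let $s^+$ be the $\vec p$-positive formula
\[
\bigwedge\{p_i \mid s(p_i) = 1\} \wedge \bigwedge\{r_j \mid s(r_j)=1\} \wedge \bigwedge\{\neg r_j \mid s(r_j) = 0\}.
\]
Then a world of type $s'$ satisfies $s^+$ iff $s \leq s'$. Put
\begin{gather*}
  \alpha \defeq \bigvee_{(S,t) \in \mathcal{T}} \Bigl(t^+ \wedge \bigwedge_{s \in S}\DmdOp s^+ \wedge \BoxOp\bigvee_{s\in S} s^+\Bigr).
\end{gather*}
This is a finite, $\vec p$-positive formula. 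It remains to check $\mathcal{C}_k \vDash \varphi \leftrightarrow \alpha$.

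The direction $\varphi \to \alpha$ is immediate: if $\varphi$ holds at $w$, take the disjunct indexed by $(S_\mathcal{M}, t_\mathcal{M}(w))$. For the converse, suppose the disjunct for $(S,t) \in \mathcal{T}$ holds at $w$ in $\mathcal{M}$, and write $S' = S_\mathcal{M}$, $t' = t_\mathcal{M}(w)$. Define
\[
Z \subseteq S \times S', \qquad Z \defeq \{(t,t')\} \cup \{(s, s'_s) \mid s \in S\} \cup \{(s_{s'}, s') \mid s' \in S'\},
\]
where $s'_s \in S'$ is a witness with $s \leq s'_s$ (given by $\DmdOp s^+$), and $s_{s'} \in S$ satisfies $s_{s'} \leq s'$ (given by the $\BoxOp$-conjunct). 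Then $Z$ is full, $t \mathrel Z t'$, and every pair in $Z$ is $\leq$-increasing. Moreover, $|Z| \leq 2^m \cdot 2^n \cdot 2^n = 2^{2n+m} \leq k$, because related types share their $\bar r$-part.

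The main step is now to realize $Z$ by two valuations on one and the same frame. Choose a surjection $g : \underbar{k} \to Z$ with $g(0) = (t,t')$; it exists because $1 \leq |Z| \leq k$. Define $\vartheta_1$ by giving each world $u$ the first component of $g(u)$, and $\vartheta_2$ by giving it the second component. Then $\vartheta_1 \leq_{\vec p} \vartheta_2$ as valuations on $\mathcal{C}_k$ (on variables outside $\vec p \cup \bar r$ set both to $\emptyset$). By fullness of $Z$, the realized type sets are exactly $S$ and $S'$. Since $(S,t) \in \mathcal{T}$, $\varphi$ holds at $0$ under $\vartheta_1$. By $\vec p$-monotonicity (\cref{mon-equiv}), it holds at $0$ under $\vartheta_2$, i.e.\ in configuration $(S', t')$. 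Hence $\mathcal{M}, w \vDash \varphi$. This gives $\varphi \sim_{\mathrm{Log}\,\mathcal{C}_k} \alpha$, and therefore ${\rm LPP}(n,m)$.

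I expect the only delicate points to be the counting bound on $|Z|$, and making sure that the $\BoxOp$-clause is what guarantees fullness, so that the realized type set under $\vartheta_2$ is all of $S'$ rather than a proper subset.
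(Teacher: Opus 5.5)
Your proof is correct, but it takes a different route from the paper. The paper argues by a short reduction. The monotonicity statement $\varphi(\vec p,\bar r)\to\varphi(\vec p\vee\vec q,\bar r)$ has $2n+m$ variables, and the $(2n+m)$-variable fragments of $\mathrm{Log}\,\mathcal{C}_k$ and $\mathrm{S5}$ coincide for $k\geq 2^{2n+m}$. So $\varphi$ is already $\vec p$-monotone in $\mathrm{S5}$, and since $\mathrm{S5}$ has LIP, \cref{lynd} supplies a $\vec p$-positive formula that is even $\mathrm{S5}$-equivalent to $\varphi$.

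You stay inside $\mathcal{C}_k$ instead. You encode $\varphi$ by its set $\mathcal{T}$ of true configurations and write down an explicit $\vec p$-positive normal form $\alpha$. You then get $\alpha\to\varphi$ by realizing an increasing full correspondence $Z\subseteq S\times S'$ as a pair $\vartheta_1\leq_{\vec p}\vartheta_2$ on one cluster, using nothing beyond \cref{mon-equiv}. The two arguments are close in spirit: your pair $(\vartheta_1,\vartheta_2)$ is just one valuation of the $2n+m$ variables $\vec p,\vec q,\bar r$, and having room to realize all such types in a large cluster is what underlies the fragment fact. Yours, however, is self-contained, avoids the detour through LIP of $\mathrm{S5}$, and exhibits the positive formula.

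It also gives a bit more than stated, because your count of $|Z|$ is slack. The pairs $u\leq v$ of types number only $2^m3^n$, so the same argument proves ${\rm LPP}(n,m)$ for all $k\geq 2^m3^n$. For instance, $\mathrm{Log}\,\mathcal{C}_3$ has ${\rm LPP}(1,0)$, which the stated bound does not give. Together with \cref{ex:cn-par}, this shows that $\mathcal{C}_2$ is the only cluster $\mathcal{C}_k$ whose logic lacks ${\rm LPP}(1,0)$.
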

      \begin{proof}
        Let $\vec p = (p_i)_{i<n}$, $\bar r = (r_j)_{j < m}$, $\varphi(\vec p, \bar r)$ be a
        $\vec p$-monotone in $\mathcal{C}_k$ formula. Then the formula $\psi(\vec p, \vec q, \bar r)
        \defeq \varphi(\vec p, \bar r) \to \varphi(\vec p \vee \vec q, \bar r)$ with $2n+m$
        variables is derivable in $\mathrm{Log}\,(\mathcal{C}_k)$. It is well-known that
        $s$-variable fragments of $\mathrm{S5}$ and $\mathrm{Log}\,(\mathcal{C}_{2^s})$ coincide.
        Therefore, $\mathrm{S5} \vdash \psi$, that is, $\varphi$ is $\vec p$-monotone in
        $\mathrm{S5}$. Since $\mathrm{S5}$ has LPP, there is a $\vec p$-positive formula $\psi$ such
        that $\varphi \sim_\mathrm{S5} \psi$. Clearly,
        $\mathcal{C}_k \vDash \varphi \leftrightarrow \psi$.
      \end{proof}
      \begin{corollary}
        For all $n, m < \omega$, there is a logic $\Lambda$ with ${\rm LPP}(n,m)$ but without
        ${\rm LPP}$.
      \end{corollary}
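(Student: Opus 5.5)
The plan is to combine the two preceding propositions. Given $n, m < \omega$, I will choose a single cluster $\mathcal{C}_k$ that is large enough for \cref{cn-par-2} to give ${\rm LPP}(n,m)$, yet still falls within the range of \cref{ex:cn-par} for some other number of parameters, which refutes full LPP.

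First fix $k \defeq 2^{2n+m}$. If $2n+m = 0$, then $k = 1$ and $\mathcal{C}_1$ falls outside \cref{ex:cn-par}. In that case, or uniformly in every case, I will replace $k$ by $\max(2, 2^{2n+m})$. This replacement is harmless, because the hypothesis $k \geq 2^{2n+m}$ of \cref{cn-par-2} is monotone in $k$. So I take $k \defeq \max(2, 2^{2n+m})$ and $\Lambda \defeq \mathrm{Log}\,\mathcal{C}_k$. By \cref{cn-par-2}, $\Lambda$ has ${\rm LPP}(n,m)$.

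Next, choose $m'$ with $2^{m'} + 1 \geq k$; for example $m' \defeq 2n+m+1$ works, since $2^{2n+m+1}+1 \geq k$. Since also $k \geq 2$, \cref{ex:cn-par} applies and shows that $\Lambda$ does not have ${\rm LPP}(1,m')$. By definition, LPP $= {\rm LPP}(\omega,\omega)$ implies ${\rm LPP}(1,m')$, so $\Lambda$ lacks LPP.

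There is no real obstacle. The only points needing care are the degenerate case $k<2$ and the check that one value of $k$ satisfies both inequalities at once: the lower bound $k \geq 2^{2n+m}$ and the upper bound $k \leq 2^{m'}+1$ for a freely chosen $m'$.
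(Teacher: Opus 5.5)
Your proposal is correct and matches the argument the paper intends; the corollary is stated without proof right after \cref{cn-par-2}. You take $\Lambda = \mathrm{Log}\,\mathcal{C}_k$ with $k \geq \max(2, 2^{2n+m})$, get ${\rm LPP}(n,m)$ from \cref{cn-par-2}, and refute ${\rm LPP}(1,m')$ for a suitable $m'$ via \cref{ex:cn-par}; your care with the degenerate case $n=m=0$, where $\mathcal{C}_1$ would give $\mathrm{Triv}$, which has LPP, is appropriate.
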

      At the same time, we do not know examples of logics with ${\rm LPP}(1,\omega)$ or
      ${\rm LPP}(\omega,0)$, which do not have LPP.
    \subsubsection{Preservation theorem for tabular normal logics}
      To prove that LPP holds in $\mathrm{Log}\, \mathcal{D}_k$, we need to refine the preservation
      theorem for positive formulas in tabular normal logics.

      Let $\mathcal{F} = (W, R)$ be a Kripke frame, $\mathcal{M} = (\mathcal{F}, \vartheta)$ be a
      model on it. For $V \subseteq W$, we denote by $\mathcal{F}|_V$ the frame $(V, R|_V)$, where
      $R|_V \defeq R \cap (V \times V)$, and by $\mathcal{M}|_V$ the model
      $(\mathcal{F}|_V, \vartheta|_V)$, where $\vartheta|_V$ maps each variable $p$ to
      $\vartheta(p) \cap V$. Let us also denote by $R^*$ the reflexive transitive closure of $R$.
      $V \subseteq W$ is a \emph{cone} if $V = R^*\{v\}$ for some $v \in V$.
      \begin{definition}
        Let $\mathcal{F}_1 = (W_1, R_1)$ and $\mathcal{F}_2 = (W_2, R_2)$ be Kripke frames,
        $V \subseteq W_1$ be a cone. An onto morphism
        $f : \mathcal{F}_1|_{V} \twoheadrightarrow \mathcal{F}_2$, is called a \emph{reduction} from
        $\mathcal{F}_1$ to $\mathcal{F}_2$.
      \end{definition}
      The following results are known (see~\cite[Section 1.14]{GSS09}):
      \begin{enumerate}
        \item Every tabular logic is canonical. \label{tab-1}
        \item If $\Lambda$ is tabular and $\mathcal{F}$ is a $\Lambda$-frame, then every cone in
              $\mathcal{F}$ is finite. \label{tab-2}
        \item For finite frames,
              $\mathrm{Log}\, \mathcal{F}_1 \subseteq \mathrm{Log}\, \mathcal{F}_2$ iff
              $\mathcal{F}_1$ is reducible to $\mathcal{F}_2$. \label{tab-3}
      \end{enumerate}
      \begin{lemma}
        \label{tab-red} Let $\Lambda$ be the logic of a finite Kripke frame $\mathcal{F}$,
        $w \in W_\Lambda$, $V \defeq R_\Lambda^*\{w\}$. Then $\mathcal{F}$ is reducible to
        $\mathcal{F}_\Lambda|_V$.
      \end{lemma}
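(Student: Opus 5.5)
The strategy is to show that the finite frame $\mathcal{G} \defeq \mathcal{F}_\Lambda|_V$ validates $\Lambda$ and that its logic is in turn contained in the logic of $\mathcal{F}$. Then \cref{tab-3} gives the reduction.

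First, by \cref{tab-1}, $\Lambda$ is canonical, so $\mathcal{F}_\Lambda \vDash \Lambda$. The cone $V = R_\Lambda^*\{w\}$ is a generated subframe, and generated subframes preserve validity: the inclusion of a cone is a morphism in the sense of (fwd)/(bwd). Hence $\mathcal{G} \vDash \Lambda$. By \cref{tab-2}, $V$ is finite, so $\mathcal{G}$ is a finite frame with $\mathrm{Log}\,\mathcal{F} = \Lambda \subseteq \mathrm{Log}\,\mathcal{G}$. Applying \cref{tab-3} to the finite frames $\mathcal{F}$ and $\mathcal{G}$ then yields a cone $U \subseteq W$ of $\mathcal{F}$ and an onto morphism $f : \mathcal{F}|_U \twoheadrightarrow \mathcal{G}$. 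This is exactly a reduction from $\mathcal{F}$ to $\mathcal{F}_\Lambda|_V$.

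The only step requiring care is the generated-subframe argument. I would verify directly that the inclusion $\iota : V \to W_\Lambda$ satisfies (fwd) and (bwd). (fwd) is trivial. For (bwd): if $w_1 \in V$ and $w_1 \mathrel R_\Lambda v$, then $v \in R_\Lambda^*\{w\} = V$, so $v$ is its own preimage. Thus $\iota$ is a morphism, and $\iota^{-1}$ is a homomorphism of modal algebras from $\mathcal{F}_\Lambda^*$ onto $\mathcal{G}^*$. It is onto because every subset of $V$ is of the form $\iota^{-1}X$. Validity is preserved under homomorphic images, so $\mathcal{G} \vDash \Lambda$.

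The main obstacle is that \cref{tab-3} is quoted for finite frames, so finiteness of $V$ must really be secured. This is why \cref{tab-2} is invoked, and it applies because $\mathcal{F}_\Lambda$ is a $\Lambda$-frame by canonicity. Should one worry that \cref{tab-2} needs the frame itself to be finite, an alternative is available: take a finite $\Lambda$-frame bound, namely that every rooted $\Lambda$-frame has at most $|W|$ points, which is the standard content of \cref{tab-2}.
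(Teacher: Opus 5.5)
Your proposal is correct and follows the paper's proof: \cref{tab-1} makes $\mathcal{F}_\Lambda$ a $\Lambda$-frame, \cref{tab-2} makes $V$ finite, and \cref{tab-3} applied to $\mathrm{Log}\,\mathcal{F} = \Lambda \subseteq \mathrm{Log}\,\mathcal{F}_\Lambda|_V$ yields the reduction. The only difference is that you justify $\mathrm{Log}\,\mathcal{F}_\Lambda \subseteq \mathrm{Log}\,\mathcal{F}_\Lambda|_V$ explicitly (the inclusion map is a morphism, so $\iota^{-1}$ is an onto homomorphism), which the paper leaves implicit. Your fallback is unnecessary, since \cref{tab-2} as stated already covers arbitrary, possibly infinite, $\Lambda$-frames such as $\mathcal{F}_\Lambda$.
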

      \begin{proof}
        By \cref{tab-1}, $\mathcal{F}_\Lambda$ is a $\Lambda$-frame. By \cref{tab-2}, $V$ is finite.
        Since $\mathrm{Log}\, \mathcal{F} = \Lambda = \mathrm{Log}\, \mathcal{F}_\Lambda \subseteq
        \mathrm{Log}\, \mathcal{F}_\Lambda|_V$, $\mathcal{F}$ is reducible to
        $\mathcal{F}_\Lambda|_V$ by \cref{tab-3}.
      \end{proof}
      \begin{proposition}
        Let $\mathcal{F}$ be a finite Kripke frame, $\Lambda = \mathrm{Log}\, \mathcal{F}$,
        $\tau \subseteq \mathcal{V}^\pm$. Then the following conditions on formulas $\varphi$ and
        $\psi$ are equivalent:
        \begin{enumerate}
          \item there is a $\tau$-interpolant for $\varphi, \psi \in \mathrm{Fm}$ in $\Lambda$;
                \label{it-1}
          \item $\varphi$ entails $\psi$ under all $\tau$-bisimulations between models on
                $\mathcal{F}$. \label{it-2}
        \end{enumerate}
      \end{proposition}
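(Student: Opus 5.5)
The direction (i) $\Rightarrow$ (ii) is immediate from \cref{bisim-main}. If $\iota$ is a $\tau$-interpolant, then $\mathrm{lits}(\iota) \subseteq \tau$, so $\iota$ is preserved under every $\tau$-bisimulation $Z$. Since $\varphi \preceq_\Lambda \iota \preceq_\Lambda \psi$, and models on $\mathcal{F}$ are $\Lambda$-models, $\varphi$ entails $\psi$ under $Z$.

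For (ii) $\Rightarrow$ (i) we argue by contraposition via \cref{lam-char}. Suppose there is no $\tau$-interpolant. Since $\Lambda$ is normal, $\mathfrak{M}^\sigma_\Lambda$ is the canonical Kripke model $\mathcal{M}_\Lambda$, and by \cref{lam-char} there are $w_1 \trianglelefteq_\tau w_2$ with $w_1 \in \llbracket\varphi\rrbracket$ and $w_2 \in \llbracket\neg\psi\rrbracket$. The goal is to transport this counterexample to models on $\mathcal{F}$. Let $V_k \defeq R_\Lambda^*\{w_k\}$ for $k = 1, 2$. By \cref{tab-red}, there are cones $U_k$ of $\mathcal{F}$ and onto morphisms $f_k : \mathcal{F}|_{U_k} \twoheadrightarrow \mathcal{F}_\Lambda|_{V_k}$.

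The construction proceeds in four steps.
\begin{enumerate}
  \item Restrict $\trianglelefteq_\tau$ to $Z_0 \defeq {\trianglelefteq_\tau} \cap (V_1 \times V_2)$. This is still a $\tau$-bisimulation between $\mathcal{M}_\Lambda|_{V_1}$ and $\mathcal{M}_\Lambda|_{V_2}$: the restriction to generated submodels is harmless, because $({\rm zig}_\mathrm{K})$ and $({\rm zag}_\mathrm{K})$ only ask for successors, and successors stay inside the cones.
  \item Pull the valuations back, $\vartheta_k \defeq f_k^{-1}\vartheta_\Lambda$ on $\mathcal{F}|_{U_k}$, and extend $\vartheta_k$ arbitrarily to all of $W$. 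Since $U_k$ is a cone, hence upward closed, truth at points of $U_k$ in $(\mathcal{F}, \vartheta_k)$ agrees with truth in $(\mathcal{F}|_{U_k}, \vartheta_k|_{U_k})$.
  \item Define $Z \defeq f_2^{-1} Z_0 f_1 \subseteq U_1 \times U_2$, that is, $u_1 \mathrel Z u_2$ iff $f_1(u_1) \trianglelefteq_\tau f_2(u_2)$. Each $f_k$ is a functional bisimulation preserving all literals, and $f_2^{-1}$ is a bisimulation in the reverse direction preserving all literals. Compositions of $\tau$-bisimulations with full-literal bisimulations are $\tau$-bisimulations, so $Z$ is a $\tau$-bisimulation between the models restricted to $U_1$ and $U_2$.
  \item Check that $Z$ is also a $\tau$-bisimulation between the full models on $\mathcal{F}$. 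This holds because the $Z$-related points lie in the cones $U_k$, all their successors stay in $U_k$, and the back-and-forth conditions only involve successors.
\end{enumerate}

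By surjectivity of the $f_k$, pick $u_k \in U_k$ with $f_k(u_k) = w_k$. Then $u_1 \mathrel Z u_2$. Since morphisms preserve truth, $\varphi$ holds at $u_1$ in $(\mathcal{F}, \vartheta_1)$ and $\psi$ fails at $u_2$ in $(\mathcal{F}, \vartheta_2)$. So $\varphi$ does not entail $\psi$ under $Z$, contradicting (ii).

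The main obstacle is that the two models in (ii) are both on $\mathcal{F}$ but with different valuations. This is allowed, since the statement quantifies over all pairs of models on $\mathcal{F}$. Consequently we need two separate reductions $f_1, f_2$, and we must verify carefully that the composite relation $Z$ satisfies $({\rm zig}_\mathrm{K})$, $({\rm zag}_\mathrm{K})$ and $({\rm lit})$. The verification of $({\rm lit})$ is the point to watch: it uses that $f_k$ preserves every literal exactly, while $\trianglelefteq_\tau$ preserves the literals in $\tau$.
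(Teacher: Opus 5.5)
Your proof is correct and follows the paper's argument. You contrapose via \cref{lam-char} to obtain $w_1 \trianglelefteq_\tau w_2$ in the canonical Kripke model, pull the valuation back along two reductions given by \cref{tab-red}, and take $Z = f_2^{-1}{\trianglelefteq_\tau} f_1$ as a $\tau$-bisimulation between two models on $\mathcal{F}$. Your Steps 2 and 4 (extending the valuations outside the cones $U_k$, and checking that $Z$ is still a $\tau$-bisimulation on all of $\mathcal{F}$) spell out a point the paper glosses over, since it treats the reductions as morphisms defined on the whole of $\mathcal{F}$.
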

      \begin{proof}
        Since models on $\mathcal{F}$ are $\Lambda$-models, by \cref{lam-char}, \cref{it-1}
        $\Rightarrow$ \cref{it-2}. For the converse, suppose that $\varphi$ and $\psi$ do not have a
        $\tau$-interpolant in $\Lambda$. Then, by \cref{lam-char}, $\varphi$ does not entail $\psi$
        under $\trianglelefteq_\tau$ on $\mathcal{M}_\Lambda$, that is,
        $w_1 \trianglelefteq_\tau w_2$, $\mathcal{M}_\Lambda, w_1 \vDash \varphi$, and
        $\mathcal{M}_\Lambda, w_2 \nvDash \psi$ for some $w_1, w_2 \in W_\Lambda$. By
        \cref{tab-red}, there are reductions $f_1$ and $f_2$ from $\mathcal{F}$ to
        $R_\Lambda^*\{w_1\}$ and $R_\Lambda^*\{w_2\}$. Consider the models
        $\mathcal{M}_1 \defeq (\mathcal{F}, \vartheta_1)$ and
        $\mathcal{M}_2 \defeq (\mathcal{F}, \vartheta_2)$, where
        $\vartheta_1 \defeq f_1^{-1}\vartheta_\Lambda$ and
        $\vartheta_2 \defeq f_2^{-1}\vartheta_\Lambda$. Then
        $f_1 : \mathcal{M}_1 \to \mathcal{M}_\Lambda$ and
        $f_2 : \mathcal{M}_2 \to \mathcal{M}_\Lambda$ are morphisms. Choose some worlds
        $w_1' \in f_1^{-1}\{w_1\}$ and $w_2' \in f_2^{-1}\{w_2\}$. Then
        $\mathcal{M}_1, w_1 \vDash \varphi$ and $\mathcal{M}_2, w_2 \nvDash \psi$. Moreover,
        $Z \defeq f_2^{-1} {\trianglelefteq_\tau} f_1$ is a composition of $\tau$-bisimulations,
        whence it is also a $\tau$-bisimulation. Clearly, $w_1' \mathrel Z w_2'$.
      \end{proof}
      \begin{corollary}
        \label{tab-pos-char} Let $\mathcal{F}$ be a finite frame,
        $\Lambda = \mathrm{Log}\, \mathcal{F}$, $\varphi \in \mathrm{Fm}$. Then $\varphi$ is
        equivalent to a $\vec p$-positive formula iff it is preserved under all $\vec p$-directed
        bisimulations between models on $\mathcal{F}$.
      \end{corollary}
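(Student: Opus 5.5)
The corollary is the special case of the preceding proposition in which $\psi = \varphi$ and $\tau \defeq \mathcal{V}^\pm \setminus \neg\vec p$. I would first recall the observation from the subsection on generalized interpolation: a formula $\iota$ is a $\vec p$-positive formula $\Lambda$-equivalent to $\varphi$ iff $\iota$ is a $\tau$-interpolant for $\varphi$ and $\varphi$ in $\Lambda$. Indeed, $\mathrm{lits}(\iota) \subseteq \tau$ says exactly that $\iota$ contains no literal from $\neg\vec p$, i.e.\ $\iota$ is $\vec p$-positive. Moreover, $\varphi \preceq_\Lambda \iota \preceq_\Lambda \varphi$ says exactly that $\iota \sim_\Lambda \varphi$. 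So the first condition of the corollary becomes ``there is a $\tau$-interpolant for $\varphi$ and $\varphi$ in $\Lambda$''.

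Next I would unfold the second condition. By definition, $\vec p$-directed bisimulations are $\tau$-bisimulations for this $\tau$. Also, $\varphi$ is preserved under a relation $Z$ iff $\varphi$ entails itself under $Z$, as noted right after the definition of entailment. Hence ``preserved under all $\vec p$-directed bisimulations between models on $\mathcal{F}$'' is literally ``$\varphi$ entails $\varphi$ under all $\tau$-bisimulations between models on $\mathcal{F}$''.

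Applying the preceding proposition with $\psi \defeq \varphi$ and this $\tau$ then gives the equivalence immediately. There is essentially no obstacle here; the only point to check is the bookkeeping identity $\tau = \mathcal{V}^\pm \setminus \neg\vec p$ in both places. The same identity appears in the definition of $\vec p$-directed bisimulation and in the characterization of $\vec p$-positivity via $\mathrm{lits}$, so the two conditions match verbatim.
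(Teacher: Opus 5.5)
Your proposal is correct and is exactly the paper's intended argument. The corollary is the preceding proposition specialized to $\psi = \varphi$ and $\tau = \mathcal{V}^\pm \setminus \neg\vec p$, using two facts: the $\vec p$-positive formulas $\Lambda$-equivalent to $\varphi$ are precisely the $\tau$-interpolants for $\varphi$ and $\varphi$, and preservation under a relation is self-entailment under it. This mirrors how the paper derives Theorem~\ref{pos-char} from Corollary~\ref{lam-char}.
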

    \subsubsection{Logics having LPP}
      \begin{lemma}
        \label{ppqq} Suppose that $Z \subseteq W_1 \times W_2$ is a full relation. Then there is a
        zigzag-free full relation $Z_0 \subseteq Z$.
      \end{lemma}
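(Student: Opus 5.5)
The statement is a purely combinatorial fact about a bipartite graph: $Z$ is a bipartite graph on $W_1 \sqcup W_2$ with no isolated vertices, and we want a spanning subgraph whose components are \emph{stars}. A star is centred either in $W_2$ with leaves in $W_1$, and such edges go to $Z_1$; or it is centred in $W_1$ with leaves in $W_2$, and such edges go to $Z_2$. Indeed, $Z_1$ being a function means each $w_1 \in {\rm dom}\, Z_1$ has exactly one $Z_1$-neighbour, and $Z_2^{-1}$ being a function means each $w_2 \in {\rm rng}\, Z_2$ has exactly one $Z_2$-neighbour. The disjointness conditions \cref{eq:272} say that no vertex is used by both parts. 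The plan is to construct such a star cover from a maximal matching.

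\emph{Step 1.} By Zorn's lemma, choose a matching $M \subseteq Z$ that is maximal with respect to inclusion. This works because the union of a chain of matchings (injective partial functions contained in $Z$) is again a matching. Call a vertex \emph{matched} if it lies in ${\rm dom}\, M \cup {\rm rng}\, M$. By maximality, no pair $(u_1, u_2) \in Z$ has both endpoints unmatched. Since $Z$ is full, every unmatched $u_1 \in W_1$ has a $Z$-neighbour, and this neighbour is matched. Using choice, fix such a neighbour $g(u_1) \in {\rm rng}\, M$. Symmetrically, for every unmatched $u_2 \in W_2$ fix a matched $Z$-neighbour $h(u_2) \in {\rm dom}\, M$.

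\emph{Step 2.} Treat each edge $(a, b) \in M$ separately. Let $L_b \defeq g^{-1}\{b\}$, the unmatched $W_1$-vertices attached to $b$, and $L_a \defeq h^{-1}\{a\}$, the unmatched $W_2$-vertices attached to $a$. There are three cases.
\begin{itemize}
  \item If $L_a = \emptyset$, put $(a, b)$ and all $(u_1, b)$ with $u_1 \in L_b$ into $Z_1$. This is a star centred at $b$.
  \item If $L_b = \emptyset \neq L_a$, put $(a, b)$ and all $(a, u_2)$ with $u_2 \in L_a$ into $Z_2$. This is a star centred at $a$.
  \item If both are nonempty, discard $(a, b)$. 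Put $\{(u_1, b) \mid u_1 \in L_b\}$ into $Z_1$ and $\{(a, u_2) \mid u_2 \in L_a\}$ into $Z_2$. Now $b$ and $a$ are the centres of two separate stars, and both remain covered because $L_a, L_b \neq \emptyset$.
\end{itemize}

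\emph{Step 3.} Let $Z_0 = Z_1 \cup Z_2$ and check the required properties. First, $Z_0 \subseteq Z$ by construction. For fullness: every matched vertex is covered in each case, and every unmatched vertex is covered via its edge to $g(u_1)$ or $h(u_2)$. To see that $Z_1$ is a function, note that each $w_1 \in {\rm dom}\, Z_1$ is either a matched $a$, whose only $Z_1$-edge is $(a, b)$, or an unmatched $u_1$, whose only edge is $(u_1, g(u_1))$. Symmetrically, $Z_2^{-1}$ is a function.

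The disjointness \cref{eq:272} is the only point needing care. Unmatched vertices have degree $1$ in $Z_0$, so each lies in exactly one part. A matched $a \in W_1$ lies in ${\rm dom}\, Z_1$ only in the first case, and then it has no $Z_2$-edges. In the other two cases it lies only in ${\rm dom}\, Z_2$: in the third case, $(a,b)$ was discarded and $a$ was made the centre of a $Z_2$-star. The argument for ranges is symmetric. The case split in Step 2 is designed precisely to prevent a path of length three (a ``zigzag''). The only nontrivial ingredient is Step 1, which uses Zorn's lemma and choice to handle infinite $W_1, W_2$; for finite frames, any maximal matching obtained greedily suffices.
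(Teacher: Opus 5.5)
Your proof is correct, but it takes a different route from the paper's.

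\textbf{The paper's route.} The paper argues by complete induction on $|W_1| + |W_2|$, in two cases.
\begin{itemize}
  \item If some $w_1$ has a unique $Z$-neighbour $w_2$, it deletes $w_2$ together with the set $V$ of all points whose only neighbour is $w_2$. The restriction of $Z$ is still full, so the induction hypothesis applies, and the star $V \times \{w_2\}$ is added back to the function part.
  \item If every point has at least two neighbours, it deletes an arbitrary pair $(w_1, w_2) \in Z$. This again leaves a full relation, and the pair is added back as an isolated edge.
\end{itemize}

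\textbf{Your route.} You fix all star centres at once using a maximal matching. Every edge of $Z$ has a matched endpoint, so by fullness every unmatched point has a matched neighbour. You then remove the only possible obstruction, a path of length three through a matching edge $(a,b)$ with $u_1 \in L_b$ and $u_2 \in L_a$, by your three-way case split on $L_a$ and $L_b$. In each case every point lies in exactly one star, and the domain and range disjointness conditions hold.

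\textbf{What each buys.} The paper's induction is short and choice-free. As written, however, it only applies to finite $W_1$ and $W_2$. That suffices for its use with $\mathcal{D}_k$, where $W_1 = W_2 = \{0, \dots, k-1\}$. Your argument proves the lemma for sets of arbitrary cardinality, at the price of Zorn's lemma and the choice of $g$ and $h$. In the finite case it reduces to an equally elementary greedy construction.
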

      \begin{proof}
        We proceed by complete induction on $|W_1| + |W_2|$. If $W_1$ or $W_2$ is empty, then
        another set is also empty and the statement is trivial. Now, assume that both sets are
        non-empty.

        Suppose that there is $w_1 \in W_1$ such that $|Z\{w_1\}| = 1$. Let $w_2 \in W_2$ be such
        that $w_1 \mathrel Z w_2$ and consider the set
        \begin{gather*}
          V \defeq \{v_1 \in W_1 \mid Z\{v_1\} = \{w_2\} \}.
        \end{gather*}
        We put $W_1' \defeq W_1 \setminus V$, $W_2' \defeq W_2 \setminus \{w_2\}$. It is easy to see
        that $Z' \defeq Z \cap (W_1' \times W_2')$ is a full relation between $W_1'$ and $W_2'$. By
        the induction hypothesis, there is a zigzag-free full relation $Z'_0 \subseteq Z'$ between
        $W_1'$ and $W_2'$. Then, $Z_0 \defeq Z'_0 \cup V \times \{w_2\} \subseteq Z$ is a
        zigzag-free full relation between $W_1$ and $W_2$.

        If $|Z^{-1}\{w_2\}| = 1$ for some $w_2 \in W_2$, then the statement of the lemma also holds
        by the symmetric argument.

        Now suppose that $|Z\{w_1\}| > 1$ and $|Z^{-1}\{w_2\}| > 1$ for all $w_1 \in W_1$ and
        $w_2 \in W_2$. Let us fix arbitrary pair $(w_1, w_2) \in Z$ and put
        $W_1' \defeq W_1 \setminus \{w_1\}$ and $W_2' \defeq W_2 \setminus \{w_2\}$. It is easy to
        see that $Z' \defeq Z \cap (W_1' \times W_2')$ is a full relation between $W_1'$ and $W_2'$.
        By the induction hypothesis, there is a zigzag-free full relation $Z'_0 \subseteq Z'$
        between $W_1'$ and $W_2'$. Then, $Z_0 \defeq Z'_0 \cup \{(w_1, w_2)\} \subseteq Z$ is a
        zigzag-free full relation between $W_1$ and $W_2$.
      \end{proof}
      \begin{lemma}
        \label{qqq} Let $Z$ be a non-empty relation on $\underbar{k+1}$. Then $Z$ is a bisimulation
        on $\mathcal{D}_k$ iff $Z|_{\underbar{k}} \defeq Z \cap (\underbar{k} \times \underbar{k})$
        is a full relation on $\underbar{k}$.
      \end{lemma}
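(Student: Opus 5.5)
The plan is to argue directly with the Kripke conditions $({\rm zig}_\mathrm{K})$ and $({\rm zag}_\mathrm{K})$. Two features of $\mathcal{D}_k$ do all the work. First, every world $w \in \underbar{k+1}$ is reflexive and sees the whole cluster, so $\underbar{k} \subseteq R\{w\}$. Second, the extra world $k$ is seen only by $k$ itself, so $R\{w\} = \underbar{k}$ for $w \in \underbar{k}$ and $R\{k\} = \underbar{k+1}$. Below, ``full on $\underbar{k}$'' means ${\rm dom} = {\rm rng} = \underbar{k}$ for $Z|_{\underbar{k}}$.

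\emph{($\Leftarrow$).} Suppose $Z|_{\underbar{k}}$ is full and $(w_1, w_2) \in Z$. For $({\rm zig}_\mathrm{K})$, take $v_1 \in R\{w_1\}$.
\begin{itemize}
  \item If $v_1 \in \underbar{k}$, fullness gives some $v_2 \in \underbar{k}$ with $v_1 \mathrel Z v_2$. Then $v_2 \in R\{w_2\}$, since every world sees the cluster.
  \item If $v_1 = k$, then $w_1 = k$ (only $k$ sees $k$). Take $v_2 \defeq w_2$: it lies in $R\{w_2\}$ by reflexivity, and $v_1 = w_1 \mathrel Z w_2$.
\end{itemize}
The argument for $({\rm zag}_\mathrm{K})$ is the mirror image. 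This direction is routine. Nonemptiness is not even needed here, since for empty $Z$ both conditions hold vacuously.

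\emph{($\Rightarrow$).} Let $Z$ be a nonempty bisimulation. The main obstacle is that a pair in $Z$ may involve the world $k$, so the first step is to produce a pair lying entirely inside the cluster.

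Step 1 (a pair inside the cluster). Fix $(w_1, w_2) \in Z$ and any $u \in \underbar{k}$. Since $w_1 \mathrel R u$, $({\rm zig}_\mathrm{K})$ gives $x \in R\{w_2\}$ with $u \mathrel Z x$.
\begin{itemize}
  \item If $x \in \underbar{k}$, the pair $(u, x)$ is what we want.
  \item Otherwise $x = k$. Since $k \mathrel R u'$ for any $u' \in \underbar{k}$, $({\rm zag}_\mathrm{K})$ applied to $(u, k)$ gives $v \in R\{u\} = \underbar{k}$ with $v \mathrel Z u'$. This is again a pair inside the cluster.
\end{itemize}

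Step 2 (fullness). Let $(a, b) \in Z$ with $a, b \in \underbar{k}$. Then $R\{a\} = R\{b\} = \underbar{k}$. Applying $({\rm zig}_\mathrm{K})$ to $(a, b)$ shows that every $u_1 \in \underbar{k}$ is $Z$-related to some element of $\underbar{k}$, so ${\rm dom}\, Z|_{\underbar{k}} = \underbar{k}$. Applying $({\rm zag}_\mathrm{K})$ shows that every $u_2 \in \underbar{k}$ has a $Z$-preimage in $\underbar{k}$, so ${\rm rng}\, Z|_{\underbar{k}} = \underbar{k}$. Hence $Z|_{\underbar{k}}$ is full on $\underbar{k}$.

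The hypothesis that $Z$ is nonempty is needed only in Step 1, and it is genuinely necessary there: the empty relation is a bisimulation, but its restriction to $\underbar{k}$ is not full.
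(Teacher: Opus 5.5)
Your proof is correct and follows essentially the same route as the paper. The ($\Leftarrow$) direction is identical: cluster points are handled by fullness and the point $k$ by reflexivity. In ($\Rightarrow$) you likewise chain $({\rm zig}_\mathrm{K})$ and $({\rm zag}_\mathrm{K})$ to get into the cluster; the only difference is that the paper goes zag-then-zig from an arbitrary pair to get $0 \mathrel Z j$ with $j < k$ and then appeals to symmetry, whereas you first extract one pair inside $\underbar{k}$ and read off both the domain and the range from it.
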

      \begin{proof}
        Let $R_k$ be the accessibility relation in $\mathcal{D}_k$.

        ($\Rightarrow$). By symmetry, it is sufficient to show that there is $j < k$ such that
        $0 \mathrel Z j$. Let us fix some $(w_1, w_2) \in Z$. Since $w_2 \mathrel R_k 0$,
        $v_1 \mathrel Z 0$ for some $v_1 \in R_k\{w_1\}$. Since $v_1 \mathrel R_k 0$ and
        $R_k\{0\} = \underbar{k}$, $0 \mathrel Z j$ for some $j < k$.

        ($\Leftarrow$). By symmetry, it is sufficient to check $({\rm zig}_\mathrm{K})$. Let
        $w_1, w_2, v_1 < k+1$ be such that $w_1 \mathrel R_k v_1$ and $w_1 \mathrel Z w_2$. If
        $v_1 < k$, then there is $v_2 < k$ such that $v_1 \mathrel Z v_2$. Clearly,
        $w_2 \mathrel R_k v_2$. Otherwise, $w_1 = v_1 = k$, whence $v_1 \mathrel Z v_2$ and
        $w_2 \mathrel R_k v_2$ for $v_2 \defeq w_2$.
      \end{proof}
      \begin{proposition}
        \label{ex:dk} For all $k \in \omega \setminus \{0\}$, $\mathrm{Log}\, \mathcal{D}_k$ has
        LPP.
      \end{proposition}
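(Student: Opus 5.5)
By \cref{tab-pos-char}, it suffices to show the following: every $\vec p$-monotone in $\Lambda = \mathrm{Log}\,\mathcal{D}_k$ formula $\varphi$ is preserved under every $\vec p$-directed bisimulation $Z$ between models $\mathcal{M}_1 = (\mathcal{D}_k, \vartheta_1)$ and $\mathcal{M}_2 = (\mathcal{D}_k, \vartheta_2)$. The tool for this is \cref{l:939}: $\vec p$-monotone formulas are preserved under zigzag-free $\vec p$-directed bisimulations between models on $\Lambda$-frames. So the plan is to reduce an arbitrary $Z$ to zigzag-free sub-bisimulations that still contain any prescribed pair.

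Fix $(w_1, w_2) \in Z$ with $\mathcal{M}_1, w_1 \vDash \varphi$. We want $\mathcal{M}_2, w_2 \vDash \varphi$. By \cref{qqq}, $Z|_{\underbar{k}}$ is a full relation on $\underbar{k}$. By \cref{ppqq}, it contains a zigzag-free full relation $Z_0' \subseteq Z|_{\underbar{k}}$.

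Next we add the pair $(w_1, w_2)$ while keeping zigzag-freeness. Since sub-relations of $Z$ automatically satisfy $({\rm lit})$, the set $Z_0 = Z_0' \cup \{(w_1, w_2)\}$ satisfies $({\rm lit})$. By \cref{qqq}, $Z_0$ is a bisimulation on $\mathcal{D}_k$, because its restriction to $\underbar{k}$ contains the full relation $Z_0'$. Note that \cref{qqq} says any relation whose restriction to $\underbar{k}$ is full is a bisimulation; nonemptiness is clear.

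It then remains to make $Z_0$ zigzag-free, which I expect to be the main obstacle. We handle it by cases.

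\begin{itemize}
\item If $w_1 = w_2 = k$, the pair $(k,k)$ touches neither the domain nor the range of $Z_0'$, since those lie in $\underbar{k}$. So it can be added to $Z_1$ harmlessly.
\item If exactly one of $w_1, w_2$ equals $k$, say $w_1 = k$, put the pair into $Z_1$: its domain point $k$ is new, and $Z_1$ stays a function. We must check that $w_2 \notin {\rm rng}\, Z_2$. If it is, we move to an alternative: do not require $Z_0' \subseteq Z|_{\underbar{k}}$ to be produced blindly, but apply \cref{ppqq} to $Z|_{\underbar{k}}$ with the pair forced in.
\item If both $w_1, w_2 < k$, we use the same forcing idea.
\end{itemize}

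Concretely, I would strengthen \cref{ppqq}: for a full relation $Z$ and any $(w_1, w_2) \in Z$, there is a zigzag-free full $Z_0 \subseteq Z$ containing $(w_1, w_2)$. The proof is by the same induction. Start by removing $w_1$ and $w_2$, and apply the lemma to the rest when it stays full. Otherwise some points become uncovered; these are exactly points related only to $w_1$ or only to $w_2$. Attach them as a star around $w_2$ (functional part $Z_1$) or around $w_1$ (co-functional part $Z_2$), taking care that the two stars do not both arise in a way that violates \eqref{eq:272}. If both kinds of orphans exist, I would check whether $(w_1,w_2)$ can be put in $Z_1$ together with the orphans of $w_2$, with the orphans of $w_1$ placed in $Z_2$. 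This conflict is the delicate point.

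Alternatively, and perhaps more simply, one can avoid the forced pair when $w_1, w_2 < k$. Apply \cref{l:939} to a zigzag-free $Z_0' \cup \{(k,k)\}$, or to compositions, together with the fact that in $\mathcal{D}_k$ the truth of $\varphi$ at a cluster point $j < k$ is determined by the model restricted to the cluster. In that case, preservation at $(w_1,w_2)$ follows from preservation at the root pair $(k,k)$ of the modified models. Here one would set the valuations at the root $k$ of both models so that the identity-on-root pair is consistent with $({\rm lit})$, which requires root points that agree on non-$\vec p$ literals and increase on $\vec p$. This reduction is easily checked: take $\vartheta_1(k), \vartheta_2(k)$ both empty on $\vec p$ and equal elsewhere.
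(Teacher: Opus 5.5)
Your root--root case ($w_1 = w_2 = k$) is correct and matches the paper's first case. The other cases have a genuine gap: there need not be \emph{any} zigzag-free sub-bisimulation of $Z$ containing $(w_1,w_2)$, so the strengthened \cref{ppqq} you propose is false.

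Take $k=2$ and $Z = \{(0,0),(0,1),(1,1)\}$ with the pair $(0,1)$. By \cref{qqq}, $Z$ is a bisimulation on $\mathcal{D}_2$, and it is $\vec p$-directed for, e.g., equal constant valuations. By \cref{qqq}, any sub-bisimulation containing $(0,1)$ has full restriction to $\underbar{2}$. The right $0$ is related only to the left $0$, and the left $1$ only to the right $1$, so this forces all of $Z$. But $Z$ is not zigzag-free. The left $0$ has two images, so $(0,0),(0,1) \in Z_2$. Then $(1,1)$ fits neither into $Z_2$ (since $Z_2^{-1}$ must be a function) nor into $Z_1$ (since ${\rm rng}\,Z_1 \cap {\rm rng}\,Z_2 = \emptyset$).

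The mixed case fails the same way. Take $k=3$ and $Z = \{(0,0),(0,1),(1,2),(2,2),(3,0)\}$. The only full subrelation of $Z|_{\underbar 3}$ is $Z|_{\underbar 3}$ itself, in which $0 \in {\rm rng}\,Z_2$, so $(3,0)$ cannot be added. Moreover, $(k,w_2)$ does not even lie in $Z|_{\underbar k}$, so ``forcing it into \cref{ppqq}'' has no content. The conflict you call delicate is therefore a real obstruction: \cref{l:939} cannot be applied to sub-relations of $Z$ alone.

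Your fallback paragraph has the right shape (modify the roots and reduce to the pair $(k,k)$), but the root valuations you propose break both transfer steps. If the new roots are empty on $\vec p$, truth of $\varphi$ at $w_1$ says nothing about the new root (take $\varphi = p$). Likewise, truth at the new root says nothing about $w_2$.

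The missing idea, which is essentially the paper's argument, is to let each root \emph{copy} the relevant cluster point. Let $\mathcal{M}'_1$ agree with $\mathcal{M}_1$ on $\underbar k$ and give $k$ the valuation of $w_1$. Let $\mathcal{M}'_2$ agree with $\mathcal{M}_2$ on $\underbar k$ and give $k$ the valuation of $w_2$. Then:
\begin{enumerate}
\item $\{(w_1,k)\} \cup 1_{\underbar k}$ is a bisimulation for all literals from $\mathcal{M}_1$ to $\mathcal{M}'_1$, so $\mathcal{M}'_1, k \vDash \varphi$.
\item $Z|_{\underbar k} \cup \{(k,k)\}$ is a $\vec p$-directed bisimulation from $\mathcal{M}'_1$ to $\mathcal{M}'_2$, because $({\rm lit})$ at $(k,k)$ is inherited from $({\rm lit})$ at $(w_1,w_2)$. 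Your root--root argument then gives $\mathcal{M}'_2, k \vDash \varphi$.
\item The map $k \mapsto w_2$, $i \mapsto i$ for $i<k$, is a morphism from $\mathcal{M}'_2$ to $\mathcal{M}_2$, so $\mathcal{M}_2, w_2 \vDash \varphi$.
\end{enumerate}
Only the middle step needs zigzag-freeness; the outer two use preservation of all formulas under ordinary bisimulations and morphisms.
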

      \begin{proof}
        By \cref{tab-pos-char}, it is sufficient to show that every $\vec p$-monotone in
        $\mathcal{D}_k$ formula $\varphi$ is preserved under $\vec p$-directed bisimulations between
        models on $\mathcal{D}_k$. Let $\mathcal{M}_1 = (\mathcal{D}_k, \vartheta_1)$ and
        $\mathcal{M}_2 = (\mathcal{D}_k, \vartheta_2)$ be models, $Z$ be a $\vec p$-directed
        bisimulation between $\mathcal{M}_1$ and $\mathcal{M}_2$, $(w_1, w_2) \in Z$ be such that
        $\mathcal{M}_1, w_1 \vDash \varphi$.

        Suppose that $w_1 = w_2 = k$. By \cref{qqq}, $Z|_{\underbar{k}}$ is a full relation on
        $\underbar{k}$. By \cref{ppqq}, there is a zigzag-free full relation
        $Z_0 \subseteq Z|_{\underbar{k}}$ on $\underbar{k}$. Then, by \cref{qqq}
        $Z' \defeq Z_0 \cup \{(n, n)\}$ is a bisimulation on $\mathcal{D}_k$, whence it is clearly a
        zigzag-free $\vec p$-directed bisimulation between $\mathcal{M}_1$ and $\mathcal{M}_2$. By
        \cref{mon-char}, $\mathcal{M}_2, k \vDash \varphi$.

        Now, suppose that $w_1 = k$ and $w_2 < k$. Consider the model
        $\mathcal{M}' \defeq (\mathcal{D}_k, \vartheta')$ such that
        $\mathcal{M}'|_{\underbar k} = \mathcal{M}_2|_{\underbar k}$ and, for all
        $q \in \mathcal{V}$, $\mathcal{M}', k \vDash q \Leftrightarrow \mathcal{M}_2, w_2 \vDash k$.
        Let $Z' \defeq Z|_{\underbar k} \cup \{(k, k)\}$. By \cref{qqq}, $Z'$ is a bisimulation on
        $\mathcal{D}_k$, whence it is clearly a $\vec p$-directed bisimulation between
        $\mathcal{M}_1$ and $\mathcal{M}'$. By the previous case, $\mathcal{M}', k \vDash \varphi$.
        At the same time, it is easy to see that $\{(k, w_2)\} \cup \{(i, i) \mid i < k\}$ is a
        morphism from $\mathcal{M}'$ into $\mathcal{M}_2$, whence
        $\mathcal{M}_2, w_2 \vDash \varphi$.

        Finally, suppose that $w_1 < k$. Consider the model
        $\mathcal{M}' \defeq (\mathcal{D}_k, \vartheta')$ such that
        $\mathcal{M}'|_{\underbar k} = \mathcal{M}_1|_{\underbar k}$ and, for all
        $q \in \mathcal{V}$, $\mathcal{M}', k \vDash q \Leftrightarrow \mathcal{M}_1, w_1 \vDash k$.
        Clearly, $\{(w_1, k)\} \cup \{(i, i) \mid i < k\}$ is a bisimulation between $\mathcal{M}_1$
        and $\mathcal{M}'$, whence $\mathcal{M}', k \vDash \varphi$. Let
        $Z' \defeq Z|_{\underbar k} \cup \{(k, w_2)\}$. By \cref{qqq}, $Z'$ is a bisimulation on
        $\mathcal{D}_k$, whence it is a $\vec p$-directed bisimulation between $\mathcal{M}'$ and
        $\mathcal{M}_2$. By the previous cases, $\mathcal{M}', w_2 \vDash \varphi$.
      \end{proof}

  \section{Interpolation and LPP in non-normal logic}
  \label{s:non-norm} In this section, we are going to investigate the Lyndon positivity property in
  non-normal logics. To establish LPP we will use a construction known as bisimulation or zigzag
  product of frames. In \cite[Section 5.2]{Marx95}, it was shown that every canonical normal logic
  which is preserved under bisimulation products of Kripke frames has CIP. In \cite[Section
  9.2]{Han03}, bisimulation products of neighborhood frames were defined and it was noted that, for
  $\sigma$- and $\pi$-canonical monotone logics which are preserved under bisimulation products, CIP
  can be proven in a similar way. In this section, we refine these results in two directions.
  Firstly, we show that all such logics have not only CIP, but also LIP and LPP (recall that it is
  unknown whether $LIP \Rightarrow {\rm LPP}$ in non-normal logics). Secondly, we provide an
  infinite family of non-normal logics which satisfy these conditions (that is, are
  $\sigma$-canonical and are preserved under bisimulation products, whence have both LIP and LPP),
  namely all logics axiomatizable over $\mathrm{EM}$ by means of closed formulas and formulas of the
  form $\alpha(p) \to \DmdOp p$, where $\alpha$ is positive.
  \subsection{Bisimulation products of neighborhood models}
    \begin{definition}
      Let $Z \subseteq W_1 \times W_2$ be a full bisimulation between neighborhood frames
      $\mathfrak{F}_1 = (W_1, \DmdOp_1)$ and $\mathfrak{F}_2 = (W_2, \DmdOp_2)$. A frame
      $\mathfrak{F} = (Z, \DmdOp)$ is a \emph{bisimulation product} of $\mathfrak{F}_1$ and
      $\mathfrak{F}_2$ if, for $k = 1, 2$, $\pi_k : (w_1, w_2) \mapsto w_k$ are morphisms from
      $\mathfrak{F}$ to $\mathfrak{F}_k$, that is,
      \begin{gather}
        \DmdOp\pi_k^{-1}X_k = \pi_k^{-1}(\DmdOp_kX_k) \quad\text{for } k = 1, 2,\;X_k \subseteq W_k.
        \label{eq:bisim-prod}
      \end{gather}
    \end{definition}
    Notice that the following equalities trivially hold:
    \begin{gather*}
      \pi_1\pi_1^{-1} = 1_{W_1},\quad \pi_2\pi_2^{-1} = 1_{W_2},\quad \pi_2\pi_1^{-1} = Z,\quad
      \pi_1\pi_2^{-1} = Z^{-1}.
    \end{gather*}
    Similarly to the case of canonical model, we can state the condition~\cref{eq:bisim-prod} in
    terms of operator extensions. For $k = 1, 2$, let
    $D_k \defeq \{ \pi_k^{-1} X_k \mid X_k \subseteq W_k\}$. It is easy to see that
    $(\mathfrak{P}(W_1 \times W_2)|_{D_k}, \pi_k^{-1}\DmdOp_k\pi_k)$ is a modal algebra which is
    isomorphic to $\mathfrak{F}_k^*$. Then $\DmdOp$ satisfies~\cref{eq:bisim-prod} iff it is a
    common extension of operators $\pi_1^{-1}\DmdOp_1\pi_1$ defined on $D_1$ and
    $\pi_2^{-1}\DmdOp_2\pi_2$ defined on $D_2$.
    \begin{definition}
      A logic $\Lambda$ is \emph{preserved under (some) bisimulation products} if, for every full
      bisimulation $Z$ between $\Lambda$-frames $\mathfrak{F}_1$ and $\mathfrak{F}_2$, there is an
      operator $\DmdOp$ on $\mathcal{P}(Z)$ such that $\mathfrak{F} = (Z, \DmdOp)$ is a bisimulation
      product of $\mathfrak{F}_1$ and $\mathfrak{F}_2$ and $\mathfrak{F} \vDash \Lambda$.
    \end{definition}

    Consider the following operator on $\mathcal{P}(Z)$:
    \begin{gather*}
      \DmdOp_\mathrm{max} X \defeq \pi_1^{-1}\DmdOp_1\pi_1X \cap \pi_2^{-1}\DmdOp_2\pi_2X.
    \end{gather*}
    \vspace{-3.5ex}
    \begin{lemma}
      $\DmdOp_\mathrm{max}$ is the greatest monotone operator satisfying~\cref{eq:bisim-prod}.
    \end{lemma}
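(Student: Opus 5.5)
The statement splits into three claims: $\DmdOp_\mathrm{max}$ is monotone, it satisfies \cref{eq:bisim-prod}, and every monotone operator $\DmdOp$ on $\mathcal{P}(Z)$ satisfying \cref{eq:bisim-prod} is pointwise below it. I will prove them in that order.

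\textbf{Monotonicity.} This is immediate. Images $\pi_k X$ and preimages $\pi_k^{-1}$ are monotone, and $\DmdOp_k$ is monotone because the frames are monotone. Composition and intersection preserve monotonicity.

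\textbf{The equation \cref{eq:bisim-prod}.} Take $k=1$ and $X = \pi_1^{-1}X_1$; the case $k=2$ is symmetric. Since $Z$ is full, $\pi_1$ is onto $W_1$, so $\pi_1\pi_1^{-1} = 1_{W_1}$. Hence the first conjunct of $\DmdOp_\mathrm{max}X$ is exactly $\pi_1^{-1}\DmdOp_1 X_1$. For the second conjunct, $\pi_2\pi_1^{-1}X_1 = Z X_1$, so we must show
\begin{gather*}
\pi_1^{-1}\DmdOp_1X_1 \subseteq \pi_2^{-1}\DmdOp_2 Z X_1,
\end{gather*}
that is, $Z(\DmdOp_1X_1) \subseteq \DmdOp_2 Z X_1$. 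Indeed, $(w_1,w_2)\in\pi_1^{-1}\DmdOp_1X_1$ means $w_1 \in \DmdOp_1X_1$ and $w_1 \mathrel Z w_2$, so $w_2 \in Z(\DmdOp_1X_1)$. The required inclusion is precisely $({\rm zig})$. For $k=2$ the same argument uses $({\rm zag})$ with $Z^{-1} = \pi_1\pi_2^{-1}$. This is the only step where the bisimulation conditions enter, and it is the main point to check carefully: one must keep track of which composite equals $Z$ and which equals $Z^{-1}$.

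\textbf{Maximality.} Let $\DmdOp$ be monotone and satisfy \cref{eq:bisim-prod}, and let $X \subseteq Z$. Since $\pi_1^{-1}\pi_1 \geq 1_Z$, we have $X \subseteq \pi_1^{-1}\pi_1X$. By monotonicity and \cref{eq:bisim-prod} with $X_1 = \pi_1X$,
\begin{gather*}
\DmdOp X \subseteq \DmdOp\pi_1^{-1}\pi_1X = \pi_1^{-1}\DmdOp_1\pi_1X.
\end{gather*}
The same argument with $\pi_2$ gives $\DmdOp X \subseteq \pi_2^{-1}\DmdOp_2\pi_2X$. Intersecting the two inclusions yields $\DmdOp X \subseteq \DmdOp_\mathrm{max}X$.
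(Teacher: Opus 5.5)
Your proof is correct and follows the paper's own argument. You verify \eqref{eq:bisim-prod} via $\pi_1\pi_1^{-1}=1_{W_1}$ together with $({\rm zig})$ and $({\rm zag})$, and you obtain maximality from $\pi_k^{-1}\pi_k \geq 1_Z$ and monotonicity of $\DmdOp$. Your explicit check that $\DmdOp_\mathrm{max}$ is itself monotone is a small step the paper leaves implicit.
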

    \begin{proof}
      Notice that, for $X_1 \subseteq W_1$,
      \begin{gather*}
        \DmdOp_\mathrm{max} \pi_1^{-1}X_1 = \pi_1^{-1}\DmdOp_1\pi_1 (\pi_1^{-1}X_1) \cap
        \pi_2^{-1}\DmdOp_2\pi_2 (\pi_1^{-1}X_1) = \pi_1^{-1}\DmdOp_1 X_1 \cap \pi_2^{-1}\DmdOp_2
        ZX_1
      \end{gather*}
      and $\pi_2^{-1}\DmdOp_2Z X_1 \supseteq \pi_2^{-1}Z\DmdOp_1 X_1 = \pi_2^{-1}\pi_2\pi_1^{-1}
      \DmdOp_1X_1 \supseteq \pi_1^{-1}\DmdOp_1X_1$. Therefore, $\DmdOp_\mathrm{max}$ satisfies
      \cref{eq:bisim-prod} for $k = 1$. By symmetry, it also satisfies \cref{eq:bisim-prod} for
      $k = 2$.

      Suppose that some monotone operator $\DmdOp$ satisfies~\cref{eq:bisim-prod}. Since
      $\pi_k^{-1}\pi_k \geq 1_Z$,
      $\DmdOp X \subseteq \DmdOp\pi_k^{-1}\pi_k X = \pi_k^{-1}\DmdOp_k\pi_k X$ for $X \subseteq Z$.
      Thus, $\DmdOp X \subseteq \DmdOp_\mathrm{max} X$.
    \end{proof}
    \begin{definition}
      A logic $\Lambda$ is \emph{preserved under maximal bisimulation products} if, for every full
      bisimulation $Z$ between $\Lambda$-frames $\mathfrak{F}_1$ and $\mathfrak{F}_2$,
      $(Z, \DmdOp_\mathrm{max})$ is a $\Lambda$-frame. A formula $\varphi$ is \emph{preserved under
      maximal bisimulation products} if $\mathrm{EM} + \varphi$ is preserved under maximal
      bisimulation products.
    \end{definition}
    Clearly, if all formulas $\varphi \in \Gamma$ are preserved under maximal bisimulation products,
    then the logic $\mathrm{EM} + \Gamma$ is preserved under maximal bisimulation products. It is
    easy to see that, if $\varkappa$ is a closed formula and $\mathfrak{F} = (Z, \DmdOp)$ is a
    bisimulation product of $\mathfrak{F}_1$ and $\mathfrak{F}_2$, then $\mathfrak{F}_1 \vDash
    \varkappa \Leftrightarrow \mathfrak{F} \vDash \varkappa \Leftrightarrow \mathfrak{F}_2 \vDash
    \varkappa$. In this sense closed formulas are preserved under all (in particular, maximal)
    bisimulation products. In~\cite[Proposition 9.9]{Han03}, it was proven that ${\rm A}4$,
    ${\rm AT}$, and ${\rm AP}$ are preserved under maximal bisimulation products. We are going to
    show that the same holds for all formulas of the form $\alpha(p) \to \DmdOp p$, where $\alpha$
    is positive.
    \begin{lemma}
      \label{l:pos-prod} Let $\mathfrak{F} = (Z, \DmdOp_\mathrm{max})$ be a maximal bisimulation
      product of $\mathfrak{F}_1 = (W_1, \DmdOp_1)$ and $\mathfrak{F}_2 = (W_2, \DmdOp_2)$,
      $\alpha(p)$ be a positive formula. Then,
      \begin{gather*}
        \alpha_\mathfrak{F} X \subseteq \pi_1^{-1}\alpha_{\mathfrak{F}_1}\pi_1 X \cap
        \pi_2^{-1}\alpha_{\mathfrak{F}_2}\pi_2 X
        \quad\text{for all } X \subseteq Z.
      \end{gather*}
    \end{lemma}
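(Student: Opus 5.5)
Induction on the construction of the positive formula $\alpha(p)$; the only variable is $p$, and constants and other variables are absent or can be treated as constants. The two things we use are the defining formula of $\DmdOp_\mathrm{max}$ and the facts that $\pi_k^{-1}$ is a boolean homomorphism and that $\pi_k$ (as an image operator) is monotone and preserves unions. For each $X\subseteq Z$ and $k=1,2$ we prove the stronger per-coordinate statement $\alpha_\mathfrak{F}X \subseteq \pi_k^{-1}\alpha_{\mathfrak{F}_k}\pi_k X$. Intersecting over $k$ then gives the claim.

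The steps go as follows.

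\begin{itemize}
\item \emph{Base case.} For $\alpha = p$, the inclusion $X \subseteq \pi_k^{-1}\pi_k X$ is just $\pi_k^{-1}\pi_k \geq 1_Z$. For $\alpha = \top$ and $\alpha = \bot$ both sides agree, because $\pi_k^{-1}W_k = Z$ and $\pi_k^{-1}\emptyset = \emptyset$.
\item \emph{Conjunction and disjunction.} Since $\pi_k^{-1}$ preserves $\cap$ and $\cup$, the induction hypotheses for $\beta$ and $\gamma$ combine directly to give the inclusion for $\beta\wedge\gamma$ and $\beta\vee\gamma$.
\item \emph{Diamond.} Let $\alpha = \DmdOp\beta$. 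By definition, $\DmdOp_\mathrm{max}Y \subseteq \pi_k^{-1}\DmdOp_k\pi_k Y$ with $Y = \beta_\mathfrak{F}X$. The induction hypothesis gives $\pi_k Y \subseteq \pi_k\pi_k^{-1}\beta_{\mathfrak{F}_k}\pi_kX \subseteq \beta_{\mathfrak{F}_k}\pi_k X$, using $\pi_k\pi_k^{-1}\leq 1$. Monotonicity of $\DmdOp_k$ then finishes the step.
\end{itemize}

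The box case is the main obstacle. Here $\BoxOp_\mathrm{max} = -\DmdOp_\mathrm{max}-$, which equals $\pi_1^{-1}\BoxOp_1(-\pi_1 - Y)\cup\pi_2^{-1}\BoxOp_2(-\pi_2 -Y)$, and this is a union rather than an intersection.

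The plan is to use the morphism property from the preceding lemma: $\DmdOp_\mathrm{max}$ satisfies \cref{eq:bisim-prod}, and therefore so does its dual, $\BoxOp_\mathrm{max}\pi_k^{-1}X_k = \pi_k^{-1}\BoxOp_kX_k$. With $Y=\beta_\mathfrak{F}X$, the induction hypothesis gives $Y \subseteq \pi_k^{-1}B_k$ where $B_k = \beta_{\mathfrak{F}_k}\pi_kX$. Monotonicity of $\BoxOp_\mathrm{max}$ (it is the dual of a monotone operator) then yields $\BoxOp_\mathrm{max}Y \subseteq \BoxOp_\mathrm{max}\pi_k^{-1}B_k = \pi_k^{-1}\BoxOp_kB_k$, which is exactly the required inclusion for $\alpha=\BoxOp\beta$.

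This same argument also handles the diamond case uniformly: $\DmdOp_\mathrm{max}Y\subseteq\DmdOp_\mathrm{max}\pi_k^{-1}B_k=\pi_k^{-1}\DmdOp_kB_k$. So the whole proof reduces to monotonicity of $\alpha_\mathfrak{F}$ together with the morphism equations. The single point to check carefully is that \cref{eq:bisim-prod} for $\DmdOp_\mathrm{max}$ transfers to $\BoxOp_\mathrm{max}$, which follows because $\pi_k^{-1}$ commutes with complement.
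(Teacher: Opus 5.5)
Your proof is correct. The base, Boolean and $\DmdOp$ cases match the paper's; you only need $\pi_k\pi_k^{-1}\leq 1_{W_k}$ where the paper uses equality. Your $\BoxOp$ case, however, takes a different route.

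The paper computes directly from the definition of $\DmdOp_{\mathrm{max}}$. It rewrites ${-}\DmdOp_{\mathrm{max}}{-}$ as the union $\bigcup_{k=1,2}\pi_k^{-1}\BoxOp_k{-}\pi_k{-}(\cdot)$ and handles the $k=1$ term by $\pi_1\pi_1^{-1}=1_{W_1}$. It then bounds the cross term $k=2$ in four moves: it inserts $\pi_1^{-1}\pi_1\geq 1_Z$, uses $\pi_1\pi_2^{-1}=Z^{-1}$, applies the $({\rm zag}')$ condition for the bisimulation $Z^{-1}$, and finishes with $Z^{-1}{-}Z{-}\leq 1_{W_1}$.

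You instead take the conclusion of the preceding lemma, that the projections are morphisms for $\DmdOp_{\mathrm{max}}$. You dualize it using that $\pi_k^{-1}$ commutes with complement, and combine it with monotonicity of $\BoxOp_{\mathrm{max}}$. Each of these steps checks out.

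Your version is shorter and proves more. As you observe, it uses only that the product operator is monotone and satisfies \cref{eq:bisim-prod}. Hence $\alpha_{\mathfrak{F}}X\subseteq\alpha_{\mathfrak{F}}\pi_k^{-1}\pi_kX=\pi_k^{-1}\alpha_{\mathfrak{F}_k}\pi_kX$ holds for every monotone bisimulation product. This needs essentially no fresh induction, only the standard facts that positive formulas define monotone operators and that $\pi_k^{-1}$ is a modal-algebra homomorphism. Maximality then enters only in \cref{prod-preserv}, where $\bigcap_{k}\pi_k^{-1}\DmdOp_k\pi_kX$ must be identified with $\DmdOp_{\mathrm{max}}X$.

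In exchange, the paper's computation is self-contained at the level of the bisimulation conditions and does not rely on the previous lemma. Its cost is the more delicate cross-term estimate.
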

    \begin{proof}
      We proceed by induction on construction of $\alpha$. By symmetry, it is sufficient to prove
      that $\alpha_\mathfrak{F} X \subseteq \pi_1^{-1}\alpha_{\mathfrak{F}_1}\pi_1 X$.

      For $\alpha = \bot$, $\emptyset = \pi_1^{-1} \emptyset$ is trivial.

      For $\alpha = \top$, $Z = \pi_1^{-1}W_1$, since $Z$ is full.

      For $\alpha = p$, $X \subseteq \pi_1^{-1}\pi_1 X$, since $\pi_1^{-1}\pi_1 \geq 1_Z$.

      For $\alpha = \beta \wedge \gamma$, by the induction hypothesis,
      \begin{gather*}
        \alpha_\mathfrak{F} X = \beta_\mathfrak{F} X \cap \gamma_\mathfrak{F} X \subseteq
        \pi_1^{-1}\beta_{\mathfrak{F}_1}\pi_1 X \cap \pi_1^{-1}\gamma_{\mathfrak{F}_1}\pi_1 X
      \end{gather*}
      Since $\pi_1^{-1}$ is a boolean algebra homomorphism, the last term equals
      \begin{gather*}
        \pi_1^{-1}(\beta_{\mathfrak{F}_1}\pi_1 X \cap \gamma_{\mathfrak{F}_1}\pi_1 X) =
        \pi_1^{-1}\alpha_{\mathfrak{F}_1}\pi_1 X.
      \end{gather*}
      For $\alpha = \beta \vee \gamma$ the argument is similar.

      For $\alpha = \DmdOp\beta$, by the induction hypothesis,
      \begin{gather*}
        \alpha_\mathfrak{F} X = \DmdOp_\mathrm{max}(\beta_\mathfrak{F} X) \subseteq
        \pi_1^{-1}\DmdOp_1\pi_1 (\pi_1^{-1}\beta_{\mathfrak{F}_1}\pi_1 X).
      \end{gather*}
      Since $\pi_1\pi_1^{-1} = 1_{W_1}$, the last expression equals
      $\pi_1^{-1}\alpha_{\mathfrak{F}_1}\pi_1 X$.

      For $\alpha = \BoxOp\beta$, by the induction hypothesis
      \begin{align*}
        \alpha_\mathfrak{F} X = {-}\DmdOp_\mathrm{max}{-}(\beta_\mathfrak{F} X) &\subseteq
        -\bigcap_{k=1,2}\bigl(\pi_k^{-1}\DmdOp_k\pi_k{-} (\pi_1^{-1}\beta_{\mathfrak{F}_1}\pi_1
        X)\bigr)\\
        &= \bigcup_{k=1,2}\bigl(\pi_k^{-1}\BoxOp_k{-}\pi_k\pi_1^{-1}{-}(\beta_{\mathfrak{F}_1}\pi_1
        X)\bigr).
      \end{align*}
      Consider two sets in the union separately. For $k = 1$, since $\pi_1\pi_1^{-1} = 1_{W_1}$,
      \begin{gather*}
        \pi_1^{-1}\BoxOp_1{-}\pi_1\pi_1^{-1}{-}(\beta_{\mathfrak{F}_1}\pi_1 X) =
        \pi_1^{-1}\BoxOp_1(\beta_{\mathfrak{F}_1}\pi_1 X) = \pi_1^{-1}\alpha_{\mathfrak{F}_1}\pi_1
        X.
      \end{gather*}
      For $k = 2$, since $\pi_1^{-1}\pi_1 \geq 1_Z$,
      \begin{align*}
        \pi_2^{-1}\BoxOp_2{-}\pi_2\pi_1^{-1}{-}(\beta_{\mathfrak{F}_1}\pi_1 X)
        &\subseteq
        \pi_1^{-1}\pi_1\pi_2^{-1}\BoxOp_2{-}\pi_2\pi_1^{-1}{-}(\beta_{\mathfrak{F}_1}\pi_1 X)\\
        &= \pi_1^{-1}Z^{-1}\BoxOp_2{-}Z{-}(\beta_{\mathfrak{F}_1}\pi_1 X)\\
        &\subseteq \pi_1^{-1}\BoxOp_1Z^{-1}{-}Z{-}(\beta_{\mathfrak{F}_1}\pi_1 X)\\
        &\subseteq \pi_1^{-1}\BoxOp_1\beta_{\mathfrak{F}_1}\pi_1 X
      \end{align*}
      since $Z^{-1}{-}Z{-} \leq 1_{W_1}$
    \end{proof}
    \begin{proposition}
      \label{prod-preserv} Formulas of the form $\alpha(p) \to \DmdOp p$, where $\alpha(p)$ is
      positive, are preserved under maximal bisimulation products.
    \end{proposition}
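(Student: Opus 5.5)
We need to show: if $\mathfrak{F}_1,\mathfrak{F}_2$ validate $\alpha(p)\to\DmdOp p$ (and are monotone), and $Z$ is a full bisimulation between them, then $\mathfrak{F}=(Z,\DmdOp_\mathrm{max})$ validates $\alpha(p)\to\DmdOp p$. Monotonicity of $\DmdOp_\mathrm{max}$ is immediate from its definition, since $\pi_k$, $\DmdOp_k$ and $\pi_k^{-1}$ are all monotone. So the product frame is monotone, and only the axiom itself needs checking. Because $\alpha(p)\to\DmdOp p$ contains the single variable $p$, validity in a frame $(W,\DmdOp)$ amounts to the inclusion $\alpha_{\mathfrak{F}}X\subseteq \DmdOp X$ for every $X\subseteq W$. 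Here $\alpha_\mathfrak{F}$ is the operator induced by $\alpha$ with respect to the tuple $(p)$.

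The plan is to fix $X\subseteq Z$ and chain together \cref{l:pos-prod} and the validity of the axiom in the factor frames. First, \cref{l:pos-prod} gives
\begin{gather*}
  \alpha_\mathfrak{F}X \subseteq \pi_1^{-1}\alpha_{\mathfrak{F}_1}\pi_1X \cap \pi_2^{-1}\alpha_{\mathfrak{F}_2}\pi_2X.
\end{gather*}
Second, since $\mathfrak{F}_k\vDash\alpha(p)\to\DmdOp p$, we apply it with $p$ interpreted as $\pi_kX\subseteq W_k$. This yields $\alpha_{\mathfrak{F}_k}\pi_kX\subseteq\DmdOp_k\pi_kX$, and since $\pi_k^{-1}$ is monotone, $\pi_k^{-1}\alpha_{\mathfrak{F}_k}\pi_kX\subseteq \pi_k^{-1}\DmdOp_k\pi_kX$ for $k=1,2$. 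Third, intersecting the two inclusions gives
\begin{gather*}
  \alpha_\mathfrak{F}X \subseteq \pi_1^{-1}\DmdOp_1\pi_1X\cap\pi_2^{-1}\DmdOp_2\pi_2X = \DmdOp_\mathrm{max}X,
\end{gather*}
and the right-hand side is exactly the definition of $\DmdOp_\mathrm{max}$. Thus $(Z,\DmdOp_\mathrm{max})\vDash\alpha(p)\to\DmdOp p$.

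One also has to check that $(Z,\DmdOp_\mathrm{max})$ is a frame of $\mathrm{EM}+(\alpha(p)\to\DmdOp p)$ as a whole, that is, that validity of the single axiom together with monotonicity implies validity of the generated logic. This holds because the set of formulas valid in a frame is a logic (the paper notes that $\mathrm{Log}\,\mathfrak{A}$ is always a logic), and a monotone frame validates $\mathrm{EM}$.

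The real work, the inductive bound on $\alpha_\mathfrak{F}$, is already contained in \cref{l:pos-prod}, in particular its $\BoxOp$ case, which uses the zag condition of $Z$. The step needing the most care here is the translation between validity of $\alpha(p)\to\DmdOp p$ and the inclusion $\alpha_{\mathfrak{F}_k}Y\subseteq\DmdOp_kY$ for all $Y$. If $\alpha$ contains no variables, the operators are constant, and the inclusion still reads correctly, so no special case is needed.
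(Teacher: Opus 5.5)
Your proof is correct and follows the paper's argument: you bound $\alpha_\mathfrak{F}X$ via \cref{l:pos-prod}, apply validity of the axiom in each $\mathfrak{F}_k$ at $\pi_kX$, and intersect the two bounds to obtain $\DmdOp_\mathrm{max}X$. Your remarks that $\DmdOp_\mathrm{max}$ is monotone and that the product frame validates the whole logic $\mathrm{EM}+(\alpha(p)\to\DmdOp p)$ make explicit what the paper leaves implicit.
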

    \begin{proof}
      Suppose that $\alpha(p) \to \DmdOp p$ is valid in $\mathfrak{F}_1$ and $\mathfrak{F}_2$. Then,
      for each $X \subseteq Z$ and $k = 1, 2$,
      $\alpha_{\mathfrak{F}_k}\pi_kX \subseteq \DmdOp_k\pi_kX$. By \cref{l:pos-prod},
      \begin{gather*}
        \alpha_\mathfrak{F} X \subseteq \bigcap_{k=1,2}\pi_k^{-1}\alpha_{\mathfrak{F}_k}\pi_k X
        \subseteq \bigcap_{k=1,2}\pi_k^{-1}\DmdOp_k\pi_kX = \DmdOp_\mathrm{max} X.
      \end{gather*}
      Therefore, $\mathfrak{F} \vDash \alpha \to \DmdOp p$.
    \end{proof}
  \subsection{Interpolation and MPP via bisimulation products}
    \begin{lemma}
      \label{cp-int} Let $\Lambda$ be a monotone $\sigma$- or $\pi$-canonical logic which is
      preserved under bisimulation products, $\tau$ be a set of literals. Then there is a
      $\Lambda$-frame $\mathfrak{F}$ and valuations $\vartheta_1$ and $\vartheta_2$ on it such that
      $\vartheta_1 \leq_\tau \vartheta_2$ and the following holds: if there is no $\tau$-interpolant
      in $\Lambda$ for some $\varphi, \psi \in \mathrm{Fm}$, then
      $\vartheta_1(\varphi) \setminus \vartheta_2(\psi) \neq \emptyset$.
    \end{lemma}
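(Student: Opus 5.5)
The plan is to take $\mathfrak{F}$ to be a bisimulation product of a canonical frame with itself along $\trianglelefteq_\tau$; the frame depends only on $\Lambda$ and $\tau$, not on $\varphi,\psi$. Assume $\Lambda$ is $\sigma$-canonical; the $\pi$-canonical case is identical with $\mathfrak{M}^\pi_\Lambda$ in place of $\mathfrak{M}^\sigma_\Lambda$. Then $\mathfrak{F}^\sigma_\Lambda$ is a $\Lambda$-frame. By \cref{can-sim}, $Z \defeq {\trianglelefteq_\tau}$ is a full $\tau$-bisimulation on $\mathfrak{M}^\sigma_\Lambda$. In particular it is a full bisimulation between the $\Lambda$-frames $\mathfrak{F}^\sigma_\Lambda$ and $\mathfrak{F}^\sigma_\Lambda$. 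Since $\Lambda$ is preserved under bisimulation products, there is an operator $\DmdOp$ on $\mathcal{P}(Z)$ such that $\mathfrak{F} \defeq (Z, \DmdOp)$ is a bisimulation product and $\mathfrak{F} \vDash \Lambda$.

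The valuations are the pullbacks along the projections: $\vartheta_k \defeq \pi_k^{-1}\vartheta_\Lambda$ for $k = 1,2$. Since $\pi_k : \mathfrak{F} \to \mathfrak{F}^\sigma_\Lambda$ are morphisms of frames, they are morphisms of models $(\mathfrak{F},\vartheta_k) \to \mathfrak{M}^\sigma_\Lambda$, hence bisimulations between models. Applying \cref{bisim-main} to both $\pi_k$ and $\pi_k^{-1}$, we get $\vartheta_k(\chi) = \pi_k^{-1}\llbracket\chi\rrbracket$ for every formula $\chi$, using \cref{canon-main} for $\vartheta_\Lambda(\chi) = \llbracket\chi\rrbracket$. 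This step has to be done at the level of neighborhood models, since $\mathfrak{F}$ need not be Kripke; the two-sided use of \cref{bisim-main} handles that.

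Next I will check $\vartheta_1 \leq_\tau \vartheta_2$, i.e.\ $\vartheta_1(l) \subseteq \vartheta_2(l)$ for $l \in \tau$. Take $(w_1,w_2) \in \vartheta_1(l)$. Then $l \in w_1$, and since $w_1 \trianglelefteq_\tau w_2$ and $l \in \mathrm{Fm}_\tau$, we get $l \in w_2$, so $(w_1,w_2) \in \vartheta_2(l)$.

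Finally, suppose $\varphi,\psi$ have no $\tau$-interpolant in $\Lambda$. By \cref{interp-stone} there are $w \in \llbracket\varphi\rrbracket$ and $v \in \llbracket\neg\psi\rrbracket$ with $w \trianglelefteq_\tau v$. Then $(w,v) \in Z$, and by the previous paragraph $(w,v) \in \pi_1^{-1}\llbracket\varphi\rrbracket = \vartheta_1(\varphi)$ and $(w,v) \notin \pi_2^{-1}\llbracket\psi\rrbracket = \vartheta_2(\psi)$. Hence $\vartheta_1(\varphi) \setminus \vartheta_2(\psi) \neq \emptyset$. The only step I expect to need real care is the truth-preservation identity $\vartheta_k(\chi) = \pi_k^{-1}\llbracket\chi\rrbracket$ in the neighborhood setting; everything else is bookkeeping.
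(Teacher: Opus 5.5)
Your proof is correct and follows the paper's argument. You take a bisimulation product of the ($\sigma$- or $\pi$-)canonical frame with itself along $\trianglelefteq_\tau$, pull $\vartheta_\Lambda$ back along the two projections, and read off the witness pair from \cref{interp-stone}; the paper cites \cref{lam-char}, which amounts to the same thing, and your justification of $\vartheta_k(\chi) = \pi_k^{-1}\llbracket\chi\rrbracket$ via the projection morphisms is exactly what the paper's ``Clearly'' leaves implicit.
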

    \begin{proof}
      Let $s = \sigma$ if $\Lambda$ is $\sigma$-canonical and $s = \pi$ otherwise. By
      \cref{can-sim}, $\trianglelefteq_\tau$ is a full $\tau$-bisimulation on
      $\mathfrak{M}^s_\Lambda$. Let $\mathfrak{F} = (\trianglelefteq_\tau, \DmdOp)$ be a
      bisimulation product of $\mathcal{F}^s$ and $\mathcal{F}^s$ such that
      $\mathfrak{F} \vDash \Lambda$. Consider the valuations
      $\vartheta_1 \defeq \pi_1^{-1}\vartheta_\Lambda$,
      $\vartheta_2 \defeq \pi_2^{-1}\vartheta_\Lambda$ on $\mathfrak{F}$. For $l \in \tau$ and
      $v_1 \trianglelefteq_\tau v_2$, if $v_1 \in \vartheta_\Lambda(l)$, then
      $v_2 \in \vartheta_\Lambda(l)$. Therefore, $\vartheta_1 \leq_\tau \vartheta_2$.

      Now, if there is no $\tau$-interpolant for $\varphi$ and $\psi$ in $\Lambda$, then, by
      \cref{lam-char}, there are $w_1 \in \vartheta_\Lambda(\varphi)$ and
      $w_2 \in \vartheta_\Lambda(\neg\psi)$ such that $w_1 \trianglelefteq_\tau w_2$. Clearly,
      $(w_1, w_2) \in \vartheta_1(\varphi) \setminus \vartheta_2(\psi)$ in this case.
    \end{proof}
    \begin{theorem}
      \label{t:prod-mp} Let $\Lambda$ be a monotone $\sigma$- or $\pi$-canonical logic which is
      preserved under bisimulation products. Then $\Lambda$ has LPP.
    \end{theorem}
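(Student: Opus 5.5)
We prove LPP directly via \cref{cp-int}, taking $\tau \defeq \mathcal{V}^\pm \setminus \neg\vec p$. Fix tuples $\vec p$, $\bar r$ and a formula $\varphi(\vec p, \bar r)$ that is $\vec p$-monotone in $\Lambda$. As noted after the definition of $\tau$-interpolants, a $\vec p$-positive formula $\Lambda$-equivalent to $\varphi$ is exactly a $\tau$-interpolant for $\varphi$ and $\varphi$. So it suffices to show that such an interpolant exists, and we argue by contradiction: suppose it does not.

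Apply \cref{cp-int} with this $\tau$. It yields a $\Lambda$-frame $\mathfrak{F}$ and valuations $\vartheta_1, \vartheta_2$ on it with $\vartheta_1 \leq_\tau \vartheta_2$ and $\vartheta_1(\varphi) \setminus \vartheta_2(\varphi) \neq \emptyset$. The key observation is what $\vartheta_1 \leq_\tau \vartheta_2$ means for this particular $\tau$. In the proof of \cref{cp-int}, the statement $\vartheta_1 \leq_\tau \vartheta_2$ was established as $\vartheta_1(l) \subseteq \vartheta_2(l)$ for all $l \in \tau$.
\begin{itemize}
\item For $r \notin \vec p$, both $r$ and $\neg r$ lie in $\tau$. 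Hence $\vartheta_1(r) \subseteq \vartheta_2(r)$ and $-\vartheta_1(r) \subseteq -\vartheta_2(r)$, so $\vartheta_1(r) = \vartheta_2(r)$.
\item For $p_i \in \vec p$, only $p_i$ lies in $\tau$, so $\vartheta_1(p_i) \subseteq \vartheta_2(p_i)$.
\end{itemize}
Thus $\vartheta_1 \leq_{\vec p} \vartheta_2$ in the sense of the order on valuations defined before \cref{mon-equiv}.

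Since $\mathfrak{F}$ is a $\Lambda$-frame, $\mathfrak{F}^*$ is a $\Lambda$-algebra. Take $\mathfrak{C} = \{\mathfrak{A}_\Lambda\} \cup \{\mathfrak{F}^*\}$; then $\mathrm{Log}\,\mathfrak{C} = \Lambda$. By \cref{mon-equiv}, \cref{it:me-3} $\Rightarrow$ \cref{it:me-1}, the formula $\varphi$ is $\vec p$-monotone in $\mathfrak{F}^*$. Therefore $\vartheta_1(\varphi) \subseteq \vartheta_2(\varphi)$, which contradicts $\vartheta_1(\varphi) \setminus \vartheta_2(\varphi) \neq \emptyset$. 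Hence a $\tau$-interpolant exists, and $\varphi$ is $\Lambda$-equivalent to a $\vec p$-positive formula. This gives full LPP, including parameters $\bar r$, for arbitrary tuples.

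The main point to check is the reading of $\leq_\tau$ in \cref{cp-int}. That lemma is stated with $\leq_\tau$ for a set of literals, while the valuation order was only defined for tuples of variables. We must confirm that for our $\tau$ it yields equality on the non-$\vec p$ variables, including variables outside $\mathrm{vars}(\varphi)$. This follows because $\vartheta_k = \pi_k^{-1}\vartheta_\Lambda$ and $\trianglelefteq_\tau$ preserves both $r$ and $\neg r$ for every $r \notin \vec p$. Everything else is routine bookkeeping.
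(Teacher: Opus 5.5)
Your proof is correct and follows essentially the same route as the paper. You take $\tau = \mathcal{V}^\pm \setminus \neg\vec p$, apply \cref{cp-int} to $\varphi$ and $\varphi$, and observe that $\vartheta_1 \leq_\tau \vartheta_2$ amounts to $\vartheta_1 \leq_{\vec p} \vartheta_2$, so if $\varphi$ is not equivalent to a $\vec p$-positive formula it fails to be $\vec p$-monotone on a $\Lambda$-frame. Your explicit unpacking of $\leq_\tau$ and the appeal to \cref{mon-equiv} only spell out steps the paper leaves implicit.
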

    \begin{proof}
      Let $\tau \defeq \mathcal{V}^\pm \setminus \neg\vec p$ for some tuple of variables $\vec p$;
      $\mathfrak{F}$, $\vartheta_1$, and $\vartheta_2$ be as in \cref{cp-int}. If
      $\varphi \in \mathrm{Fm}$ is not equivalent to any $\vec p$-positive formula in $\Lambda$,
      then, there is no $\tau$-interpolant for $\varphi$ and $\varphi$ in $\Lambda$, whence
      $\vartheta_1(\varphi) \setminus \vartheta_2(\varphi) \neq \emptyset$. At the same time,
      $\vartheta_1 \leq_\tau \vartheta_2$. Thus, $\varphi$ is not $\vec p$-monotone in $\Lambda$.
    \end{proof}
    \begin{lemma}
      \label{val-inter} Suppose that valuations $\vartheta_1$ and $\vartheta_2$ on
      $\mathfrak{F} = (W, \DmdOp)$ are such that $\vartheta_1 \leq_\tau \vartheta_2$, where
      $\tau = \tau_1 \cap \tau_2$ for some sets of literals $\tau_1$ and $\tau_2$. Then there is a
      valuation $\vartheta$ on $\mathfrak{F}$ such that
      $\vartheta_1 \leq_{\tau_1} \vartheta \leq_{\tau_2} \vartheta_2$.
    \end{lemma}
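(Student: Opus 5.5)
I will read $\vartheta_1 \leq_\tau \vartheta_2$ for a set of literals $\tau$ as in the proof of \cref{cp-int}: $\vartheta_1(l) \subseteq \vartheta_2(l)$ for all $l \in \tau$. Since $\vartheta(\neg q) = W \setminus \vartheta(q)$, this is a conjunction of conditions, one per variable, each involving only $\vartheta_1(q)$ and $\vartheta_2(q)$. So I will define $\vartheta$ separately for each variable $q \in \mathcal{V}$, choosing $\vartheta(q)$ as a suitable boolean combination of $\vartheta_1(q)$ and $\vartheta_2(q)$. The frame operator $\DmdOp$ plays no role.

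Fix $q$ and write $X_k \defeq \vartheta_k(q)$ for $k = 1, 2$. For $Y \defeq \vartheta(q)$, the requirement $\vartheta_1 \leq_{\tau_1} \vartheta \leq_{\tau_2} \vartheta_2$ at $q$ amounts to the following bounds:
\begin{itemize}
  \item $q \in \tau_1$ gives the lower bound $X_1 \subseteq Y$;
  \item $\neg q \in \tau_1$ gives the upper bound $Y \subseteq X_1$;
  \item $q \in \tau_2$ gives the upper bound $Y \subseteq X_2$;
  \item $\neg q \in \tau_2$ gives the lower bound $X_2 \subseteq Y$.
\end{itemize}
Let $L \subseteq \{X_1, X_2\}$ be the set of active lower bounds and $U \subseteq \{X_1, X_2\}$ the set of active upper bounds. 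I will put $Y \defeq \bigcup L$, with $Y = \emptyset$ if $L$ is empty. Then every lower bound holds automatically. It remains to check that each element of $L$ is contained in each element of $U$.

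Bounds coming from the same source are trivially compatible ($X_k \subseteq X_k$). There are two mixed pairs.
\begin{itemize}
  \item Lower bound $X_1$ (from $q \in \tau_1$) against upper bound $X_2$ (from $q \in \tau_2$). Here $q \in \tau_1 \cap \tau_2 = \tau$, so the hypothesis gives $X_1 \subseteq X_2$.
  \item Lower bound $X_2$ (from $\neg q \in \tau_2$) against upper bound $X_1$ (from $\neg q \in \tau_1$). Here $\neg q \in \tau$, so the hypothesis gives $\vartheta_1(\neg q) \subseteq \vartheta_2(\neg q)$, that is, $X_2 \subseteq X_1$.
\end{itemize}
Hence $\bigcup L \subseteq \bigcap U$, and all four conditions hold for $q$. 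Doing this for every variable defines $\vartheta$, and the required chain of inequalities holds literal by literal.

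The only point needing care is the bookkeeping of how a negative literal turns into an upper or lower bound via complementation. The case analysis is then finite and immediate, so I do not expect a real obstacle.
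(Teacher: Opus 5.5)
Your proof is correct and follows the paper's argument. Both reduce $\vartheta_1 \leq_{\tau_1} \vartheta \leq_{\tau_2} \vartheta_2$ to four per-variable bound conditions, and both use the hypothesis $\vartheta_1 \leq_\tau \vartheta_2$ exactly for the two cross constraints that arise when $q \in \tau_1 \cap \tau_2$ or $\neg q \in \tau_1 \cap \tau_2$. The only difference is cosmetic: you take the uniform least solution $\vartheta(q) = \bigcup L$, whereas the paper does a case split and chooses $\vartheta(q)$ among $\vartheta_1(q)$, $\vartheta_2(q)$, $\emptyset$, and $W$.
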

    \begin{proof}
      Notice that $\vartheta_1 \leq_{\tau_1} \vartheta \leq_{\tau_2} \vartheta_2$ iff the following
      conditions are satisfied for each $p \in \mathcal{V}$:
      \begin{enumerate}
        \item if $p \in \tau_1$, then $\vartheta_1(p) \subseteq \vartheta(p)$; \label{c:1}
        \item if $\neg p \in \tau_1$, then $\vartheta_1(p) \supseteq \vartheta(p)$; \label{c:2}
        \item if $p \in \tau_2$, then $\vartheta(p) \subseteq \vartheta_2(p)$; \label{c:3}
        \item if $\neg p \in \tau_2$, then $\vartheta(p) \supseteq \vartheta_2(p)$. \label{c:4}
      \end{enumerate}
      Let us fix some variable $p$. Notice that
      \begin{itemize}
        \item $\vartheta(p) \defeq \vartheta_1(p)$ satisfies conditions \cref{c:1} and \cref{c:2};
        \item $\vartheta(p) \defeq \vartheta_2(p)$ satisfies conditions \cref{c:3} and \cref{c:4};
        \item $\vartheta(p) \defeq \emptyset$ satisfies conditions \cref{c:2} and \cref{c:3};
        \item $\vartheta(p) \defeq W$ satisfies conditions \cref{c:1} and \cref{c:4}.
      \end{itemize}
      If $p \notin \tau_1 \cap \tau_2$ and $\neg p \notin \tau_1 \cap \tau_2$, then one of
      conditions \cref{c:1} and \cref{c:3} and one of conditions \cref{c:2} and \cref{c:4} are
      vacuously satisfied. Remaining two conditions are satisfied by one of the above valuations.

      Now, suppose that $p \in \tau_1 \cap \tau_2$. Then, since $\vartheta_1 \leq_\tau \vartheta_2$,
      $\vartheta_1(p) \subseteq \vartheta_2(p)$. If $\neg p \notin \tau_1$, then condition
      \cref{c:2} is vacuously satisfied and $\vartheta(p) \defeq \vartheta_2(p)$ satisfies all other
      conditions. If $\neg p \notin \tau_2$, then condition \cref{c:4} is vacuously satisfied and
      $\vartheta(p) \defeq \vartheta_1(p)$ satisfies all other conditions. Finally, if
      $\neg p \in \tau_1 \cap \tau_2$, then $\vartheta_1(p) \supseteq \vartheta_2(p)$ and
      $\vartheta(p) \defeq \vartheta_1(p) = \vartheta_2(p)$ satisfies all conditions.

      The case $\neg p \in \tau_1 \cap \tau_2$ is symmetrical to the previous one.
    \end{proof}
    \begin{theorem}
      \label{t:prod-lip} Let $\Lambda$ be a monotone $\sigma$- or $\pi$-canonical logic which is
      preserved under bisimulation products. Then $\Lambda$ has LIP.
    \end{theorem}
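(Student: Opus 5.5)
We argue by contraposition, combining \cref{cp-int} with \cref{val-inter}. Suppose $\varphi \preceq_\Lambda \psi$ and put $\tau_1 \defeq \mathrm{lits}(\varphi)$, $\tau_2 \defeq \mathrm{lits}(\psi)$, $\tau \defeq \tau_1 \cap \tau_2$. A Lyndon interpolant is exactly a $\tau$-interpolant, so assume towards a contradiction that no $\tau$-interpolant for $\varphi$ and $\psi$ exists in $\Lambda$. We aim to show that then $\varphi \to \psi$ fails on some $\Lambda$-frame, contradicting $\varphi \preceq_\Lambda \psi$.

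First, apply \cref{cp-int} with this $\tau$. It gives a $\Lambda$-frame $\mathfrak{F} = (W, \DmdOp)$ (the bisimulation product over $\trianglelefteq_\tau$) and valuations $\vartheta_1 \leq_\tau \vartheta_2$ on it such that there is a point $x \in \vartheta_1(\varphi) \setminus \vartheta_2(\psi)$. Next, apply \cref{val-inter} to $\tau = \tau_1 \cap \tau_2$ to get a valuation $\vartheta$ on $\mathfrak{F}$ with $\vartheta_1 \leq_{\tau_1} \vartheta \leq_{\tau_2} \vartheta_2$.

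The key step is to transfer truth along these $\tau_i$-orderings. We claim that if $\vartheta' \leq_{\sigma} \vartheta''$ on the same frame and $\chi \in \mathrm{Fm}_\sigma$, then $\vartheta'(\chi) \subseteq \vartheta''(\chi)$. This is the identity relation $1_W$ viewed as a $\sigma$-bisimulation between $(\mathfrak{F}, \vartheta')$ and $(\mathfrak{F}, \vartheta'')$. Conditions (zig) and (zag) are trivial for $1_W$, and (lit) is exactly the content of $\vartheta' \leq_\sigma \vartheta''$, read literal by literal: for $p \in \sigma$ it gives $\vartheta'(p) \subseteq \vartheta''(p)$, and for $\neg p \in \sigma$ it gives the reverse inclusion, i.e.\ $\vartheta'(\neg p) \subseteq \vartheta''(\neg p)$. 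So \cref{bisim-main} yields the claim. This is the main point to check carefully: one must make sure that the meaning of $\leq_\tau$ on literals used in \cref{cp-int} and \cref{val-inter} matches the (lit) condition. The proof of \cref{cp-int} shows that $v_1 \in \vartheta_\Lambda(l)$ implies $v_2 \in \vartheta_\Lambda(l)$ for every $l \in \tau$, which is precisely (lit) for each literal, so the two readings agree.

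Given the claim, the contradiction is immediate:
\begin{itemize}
  \item since $\varphi \in \mathrm{Fm}_{\tau_1}$, we get $x \in \vartheta_1(\varphi) \subseteq \vartheta(\varphi)$;
  \item since $\psi \in \mathrm{Fm}_{\tau_2}$, we get $\vartheta(\psi) \subseteq \vartheta_2(\psi)$, so $x \notin \vartheta(\psi)$.
\end{itemize}
Hence $x \in \vartheta(\varphi) \setminus \vartheta(\psi)$, so $(\mathfrak{F}, \vartheta) \nvDash \varphi \to \psi$. But $\mathfrak{F}$ is a $\Lambda$-frame and $\Lambda \vdash \varphi \to \psi$, which is a contradiction. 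Therefore a $\tau$-interpolant, i.e.\ a Lyndon interpolant, exists, and $\Lambda$ has LIP.
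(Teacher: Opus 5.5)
Your proof is correct and is essentially the paper's argument: \cref{cp-int} gives a $\Lambda$-frame with $\vartheta_1 \leq_\tau \vartheta_2$ and a point in $\vartheta_1(\varphi) \setminus \vartheta_2(\psi)$, \cref{val-inter} gives an intermediate valuation $\vartheta$, and truth transfers along $\leq_{\mathrm{lits}(\varphi)}$ and $\leq_{\mathrm{lits}(\psi)}$ to refute $\varphi \to \psi$. The paper states the transfer inclusions $\vartheta_1(\varphi) \subseteq \vartheta(\varphi)$ and $\vartheta(\psi) \subseteq \vartheta_2(\psi)$ without comment, and your justification, treating $1_W$ as a $\sigma$-bisimulation and applying \cref{bisim-main}, correctly fills that step in.
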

    \begin{proof}
      Let us fix some formulas $\varphi$, $\psi$ and put
      $\tau \defeq \mathrm{lits}(\varphi) \cap \mathrm{lits}(\psi)$. Suppose that there is no Lyndon
      interpolant for $\varphi$ and $\psi$ in $\Lambda$. Then, by \cref{cp-int}, there is a
      $\Lambda$-frame $\mathfrak{F}$ and valuations $\vartheta_1 \leq_\tau \vartheta_2$ on it such
      that $\vartheta_1(\varphi) \setminus \vartheta_2(\psi) \neq \emptyset$. By \cref{val-inter},
      there is a valuation $\vartheta$ on $\mathfrak{F}$ such that
      $\vartheta_1 \leq_{\mathrm{lits}(\varphi)} \vartheta \leq_{\mathrm{lits}(\psi)} \vartheta_2$.
      Then $\vartheta_1(\varphi) \subseteq \vartheta(\varphi)$ and
      $\vartheta(\psi) \subseteq \vartheta_2(\psi)$, whence
      $\vartheta(\varphi) \setminus \vartheta(\psi) \neq \emptyset$. Thus,
      $\Lambda \nvdash \varphi \to \psi$.
    \end{proof}
  \begin{corollary}
    \label{cor:lip-mp} Suppose that $\Lambda$ is axiomatizable over $\mathrm{EM}$ by means of closed
    formulas and formulas of the form $\alpha(p) \to \DmdOp p$, where $\alpha$ is positive. Then
    $\Lambda$ has LIP and LPP.
  \end{corollary}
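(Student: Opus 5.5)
The plan is to verify the two hypotheses of \cref{t:prod-mp,t:prod-lip} for $\Lambda$: that it is $\sigma$-canonical and that it is preserved under bisimulation products. Both theorems then give LPP and LIP at once. Write $\Lambda = \mathrm{EM} + \Gamma$, where $\Gamma$ consists of closed formulas and formulas $\alpha(p) \to \DmdOp p$ with $\alpha$ positive.

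For canonicity, recall the remark after the definition of canonical formulas: if every formula of $\Gamma$ is $\sigma$-canonical, then $\mathrm{EM} + \Gamma$ is $\sigma$-canonical. Closed formulas are $\sigma$-canonical, as noted there via \cref{canon-main}. Each $\alpha(p) \to \DmdOp p$ has the shape of \cref{KW-canon} with $\chi = \alpha$, $n = 0$, $m = 1$, $q_0 = p$ and $\# = \DmdOp$, so it is $\sigma$-canonical. The only point to check is that \cref{KW-canon} allows $q_0$ to coincide with a variable of $\chi$. I will read it that way; this is the intended reading, since ${\rm AT}$ and ${\rm A}4$ are cited as instances. Hence $\Lambda$ is $\sigma$-canonical.

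For preservation under bisimulation products, let $Z$ be a full bisimulation between $\Lambda$-frames $\mathfrak{F}_1$ and $\mathfrak{F}_2$, and take the maximal product $\mathfrak{F} = (Z, \DmdOp_\mathrm{max})$. By the lemma on $\DmdOp_\mathrm{max}$, this operator is monotone and satisfies \cref{eq:bisim-prod}, so $\mathfrak{F}$ is a bisimulation product and validates $\mathrm{EM}$. Each closed formula in $\Gamma$ holds in $\mathfrak{F}$ because it holds in $\mathfrak{F}_1$ and $\pi_1$ is a morphism. Each $\alpha(p) \to \DmdOp p$ holds in $\mathfrak{F}$ by \cref{prod-preserv}. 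Validity in a frame is closed under substitution, modus ponens and equivalent replacement, so $\mathfrak{F} \vDash \Lambda$.

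The only step requiring any care is the canonicity step: one has to confirm that the cited form of \cref{KW-canon} covers $\alpha(p) \to \DmdOp p$, and that $\sigma$-canonicity transfers from the axioms of $\Gamma$ to $\Lambda$. Both are stated as facts earlier in the paper, so the argument reduces to combining \cref{t:prod-mp,t:prod-lip}.
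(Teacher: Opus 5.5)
Your proposal is correct and follows the paper's own proof: $\sigma$-canonicity via \cref{KW-canon} (together with canonicity of closed formulas), preservation under maximal bisimulation products via \cref{prod-preserv}, and then \cref{t:prod-mp,t:prod-lip}. You also spell out details the paper leaves implicit. These are that closed formulas transfer along the morphism $\pi_1$, and that \cref{KW-canon} allows $q_0 = p$, which its cited instances ${\rm AT}$ and ${\rm A}4$ confirm.
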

  \begin{proof}
    Such logics are canonical by \cref{KW-canon} and are preserved under maximal bisimulation
    products by \cref{prod-preserv}. Then LPP follows from \cref{t:prod-mp} and LIP follows from
    \cref{t:prod-lip}.
  \end{proof}
  \begin{corollary}
    Suppose that $\Lambda$ is axiomatizable over $\mathrm{EM}$ by means of closed formulas and
    formulas of the form $\alpha(p) \to \BoxOp p$, where $\alpha$ is positive. Then $\Lambda$ has
    LIP and LPP.
  \end{corollary}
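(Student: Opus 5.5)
The plan is to derive this corollary from \cref{cor:lip-mp} by modal duality, as set out in the subsection on modal duality in monotone logics. Write $\Lambda = \mathrm{EM} + \Gamma \cup \{\alpha_i(p) \to \BoxOp p \mid i \in I\}$, where $\Gamma$ consists of closed formulas and each $\alpha_i$ is positive. By \cref{d-log} and \cref{d-deriv}, the dual logic is $\Lambda^d = \mathrm{EM}^d + \Gamma^d \cup \{\alpha_i^d(p) \to \DmdOp p \mid i \in I\}$. We have $\mathrm{EM}^d = \mathrm{EM}$, since the dual of ${\rm AM}$ is $\BoxOp p \to \BoxOp(p \vee q)$, which is $\mathrm{EM}$-equivalent to ${\rm AM}$ (substitute $\neg p$ and $\neg p \wedge \neg q$, then contrapose). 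The formulas in $\Gamma^d$ are still closed, and each $\alpha_i^d$ is still positive, because dualization only swaps $\DmdOp$ and $\BoxOp$ and never introduces negations of variables. So $\Lambda^d$ satisfies the hypotheses of \cref{cor:lip-mp} and therefore has LIP and LPP.

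It remains to transfer LIP and LPP from $\Lambda^d$ back to $\Lambda$. For LIP: if $\Lambda \vdash \varphi \to \psi$, then $\Lambda^d \vdash \varphi^d \to \psi^d$ by \cref{d-deriv}. Take a Lyndon interpolant $\iota$ for $\varphi^d$ and $\psi^d$ in $\Lambda^d$. Then $\iota^d$ is an interpolant for $\varphi$ and $\psi$ in $\Lambda$, using $(\cdot)^{dd} = \mathrm{id}$ and $(\Lambda^d)^d = \Lambda$. It is a Lyndon interpolant because dualization preserves literals: $\mathrm{lits}(\chi^d) = \mathrm{lits}(\chi)$ for every formula $\chi$.

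For LPP: $\vec p$-monotonicity of $\varphi(\vec p, \bar r)$ in $\Lambda$ means $\Lambda \vdash \varphi(\vec p, \bar r) \to \varphi(\vec p \vee \vec q, \bar r)$. Dualizing commutes with substitution of the $\DmdOp$/$\BoxOp$-free formulas $p_i \vee q_i$, so this says exactly that $\varphi^d$ is $\vec p$-monotone in $\Lambda^d$. Hence $\varphi^d \sim_{\Lambda^d} \alpha$ for some $\vec p$-positive $\alpha$, and therefore $\varphi \sim_\Lambda \alpha^d$, where $\alpha^d$ is $\vec p$-positive.

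The only step needing real care is the bookkeeping claim that $(\mathrm{EM} + \Delta)^d = \mathrm{EM} + \Delta^d$, i.e.\ that dualization commutes with generating a logic from axioms. This follows from \cref{d-log} together with the fact that $\chi \mapsto \chi^d$ is an involution on $\mathrm{Fm}$ commuting with substitution and with the Boolean connectives. Everything else is a routine check.
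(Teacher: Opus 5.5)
Your proposal is correct and follows the paper's own argument. The paper also observes that $\Lambda^d$ satisfies the hypotheses of \cref{cor:lip-mp} and transfers LIP and LPP back to $\Lambda$ by duality; it leaves as ``easy to see'' the checks you carry out, namely $\mathrm{EM}^d=\mathrm{EM}$, that dualization preserves closedness, positivity and literals, and that it commutes with the substitution $\vec p \mapsto \vec p \vee \vec q$.
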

  \begin{proof}
    Notice that for such logics $\Lambda^d$ satisfy the conditions of \cref{cor:lip-mp}, whence
    $\Lambda^d$ has LIP and LPP. Then it is easy to see that $\Lambda$ also has LIP and LPP.
  \end{proof}

  \section{Open questions}
  \begin{itemize}
    \item Is it true that ${\rm LIP} \Rightarrow {\rm LPP}$ in non-normal monotone logics
          (cf.~\cref{lynd})?
    \item Is it true that ${\rm CIP}$ and ${\rm LPP} \Rightarrow {\rm LIP}$ in monotone/normal
          logics?
    \item Is it true that ${\rm LPP}(1,\omega) \Rightarrow {\rm LPP}$ in non-normal monotone logics
          (cf.~\cref{par-norm})?
    \item Is it true that ${\rm LPP}(\omega,0) \Rightarrow {\rm LPP}$ in monotone/normal logics
          without LIP (cf.~\cref{lpp-mono})?
    \item Describe all extensions of $\mathrm{S4}$ with LPP.
    \item Does LPP hold in $\mathrm{GL}.3$ and $\mathrm{Grz}.3$?
    \item Is it true that $\varphi$ is equivalent to a $\vec p$-positive formula in a finite
          neighborhood frame $\mathfrak{F}$ iff it is preserved under $\vec p$-directed
          bisimulations between models on $\mathfrak{F}$ (cf.~\cref{tab-pos-char})?
    \item Describe monotone formulas in non-monotone logics such as $\mathrm{E}$ and
          $\rm \mathrm{E} C$.
  \end{itemize}

  \bibliographystyle{alpha} \bibliography{defs/bib.bib}
\end{document}